\providecommand{\norm}[1]{\lVert#1\rVert}
\newcommand{\vertiii}[1]{{\vert\kern-0.25ex\vert\kern-0.25ex\vert #1 
    \vert\kern-0.25ex\vert\kern-0.25ex\vert}}
\DeclareMathOperator*{\argmin}{arg\,min}
\newcommand\bdiv{\mathbf{div}}
\DeclareMathOperator{\diam}{diam}
\let\emptyset\varnothing
\algnewcommand\And{\textbf{and}}
\algnewcommand\Or{\textbf{or}}
\title{An unfitted HDG discretization \\ for a model problem in shape optimization}
\author{
    Esteban Henr\'iquez
    \thanks{} \and
    Tonatiuh S\'anchez-Vizuet
    \thanks{Esteban Henr\'iquez and Manuel Solano were partially funded by ANID-Chile through the grants  Fondecyt Regular 1240183 and Basal FB210005. Tonatiuh S\'anchez-Vizuet was partially funded by the U. S. National Science Foundation through the grant NSF-DMS-2137305.}   \and
    Manuel~Solano
    \thanks{} 
    }
\institute{
    Esteban Henr\'iquez
    \at Department of Applied Mathematics, University of Waterloo, Canada. \email{ehenriqu@uwaterloo.ca} \and
    Tonatiuh S\'anchez-Vizuet
    \at Department of Mathematics, The University of Arizona, USA. \email{tonatiuh@arizona.edu} \and
    Manuel Solano
    \at  Departamento de Ingenier\'ia Mathematical, Facultad de Ciencias Físicas y Matemáticas and Center for Research in Mathematical Engineering CI$^2$MA  Universidad de Concepci\'on, Chile. \email{msolano@ing-mat.udec.cl}
}
\authorrunning{Henr\'iquez, S\'anchez-Vizuet, Solano}
\begin{document}
\maketitle

\begin{abstract}
We apply an unfitted HDG discretization to a model problem in shape optimization. The method proposed uses a fixed, shape regular, non-geometry conforming mesh and a high order transfer technique to deal with the curved boundaries arising in the optimization process. The use of this strategy  avoids the need for constant remeshing and enables a highly accurate description of the domain using a coarse computational mesh. We develop a rigorous analysis of the well-posedness of the problems that arise from the optimality conditions, and provide an \textit{a priori} error analysis for the resulting discrete schemes. Numerical examples with manufactured problems are provided demonstrating the convergence of the scheme and the efficiency of the transfer path method. The approach proposed yields high resolution approximations of the boundary using grids with as few as 100 times less elements than an interpolatory technique.\\
\keywords{Shape optimization \and hybridyzable discontinuous Galerkin \and transfer path method.}
\subclass{49M41 \and 65N30.} 
\end{abstract}
%
\section{Introduction}\label{sec:introduction}
%
Shape optimization is an important branch of the study of optimal control theory, which was developed extensively in the 1990s and arises from the need to minimize a quantity (for instance the amount of material needed to build an industrial component or the energy cost of production) through the modification of the design shape. This area has inspired the development of a wide variety of theoretical and purely mathematical tools and has a large number of applications in science and engineering, such as architecture and civil engineering \cite{Be2014}, fluid mechanics \cite{challis2009level,duan2008shape,zhou2008variational}, modeling of quantum chemistry phenomena \cite{braida2022shape,cances2004electrons}, electromagnetism or photonics \cite{jensen2011topology,lebbe2019robust}, among other research fields. From a mathematical point of view, we can see shape optimization as finding the minimum (possibly local) of a cost functional defined over a set of admissible domains. In many cases---and we shall focus on these---the minimization problem is constrained by a partial differential equation (PDE) defined on the target domain. The computation of this minimum often requires the repeated solution of further partial differential equations arising from the optimality conditions associated to the functional in question.

The first Discontinuous Galerkin (DG) method was developed in 1973 by Reed and Hill in the context of a neutron transport problem involving a linear, time-independent, hyperbolic equation \cite{reed1973triangular}. Since then, DG methods have become one of the most widely used tools for the numerical solution of PDEs. However, DG discretizations were criticized due to the fact that the associated linear systems involve too many unknowns and involve a complicated computational implementation compared to Continuous Galerkin (CG) methods. These criticisms were resolved after the development of Hybridizable Discontinuous Galerkin (HDG) methods, first for diffusion problems and later presented in a unified framework \cite{cockburn2009unified}.

During the last two decades, HDG methods have been extensively developed for different types of equations, for instance, diffusion equations \cite{cockburn2008superconvergent,CoGoSa2010,cockburn2009superconvergent,kirby2012cg}, convection-diffusion equations \cite{cockburn2009hybridizable,fu2015analysis,nguyen2015class}, acoustic and elastic waves \cite{ArSaSo:2025,cockburn2014uniform}, Stokes flow \cite{carrero2006hybridized,cockburn2013conditions,gatica2016priori,manriquez2022dissimilar}, Oseen and Brinkman equations \cite{araya2019posteriori,cesmelioglu2013analysis,gatica2018priori}, Navier-Stokes equations \cite{cesmelioglu2017analysis,nguyen2011implicit,rhebergen2013space}, linear and nonlinear elasticity \cite{cockburn2013linearelasticity,nguyen2012hybridizable,soon2009hybridizable}, just to name a few.
\par
In recent years, the development of the \textit{transfer path method} has allowed the application of HDG to domains that are not necessarily polygonal/polyhedral by approximating the solution in a polygonal subdomain. This transfer technique was introduced within the context of HDG discretizations for linear elliptic equations in \cite{CoQiuSo2014,CoSo2012} and allows for the use of simple polygonal, non-interpolating meshes while still maintaining high order of convergence. Since then, this method has been used for Stokes flow \cite{solano2019high}, Oseen equations \cite{solano2022oseen}, the Helmholtz equation \cite{CaSo2021}, convection diffusion equations \cite{cockburn2014convdif}, the Grad-Shafranov equation \cite{SaSo2019,SaCeSo2020}, coupling with integral equations \cite{CoSaSo2012}, non-linear problems \cite{SaSaSo:2021a,SaSaSo:2019,SaSaSo:2022a}, and recently for a distributed optimal control problem \cite{HeSo2023control}, among others.
\par
For the treatment of partial differential equations arising from the shape optimization problem, work has been carried out using a variety of methods, for instance, Finite Element Method (FEM) \cite{dogan2007discrete}, Cut Finite Element Methods (CutFEM) \cite{BuElHaLarLar:2017,burman2018shape}, Boundary Element Method (BEM) \cite{bertsch2008topology,neches2008topology}, level-set methods \cite{ALLAIRE201422,Allaire}, among others. For this manuscript we seek to make a first approach in using the combination of the transfer path method with an HDG discretization to deal with the curved domains that arise naturally in shape optimization problems. These techniques combined shall allow for the use of simple, shape regular triangulations, while maintaining the high order of convergence of these methods in this new context. 

In Section \ref{sec:model_problem} we introduce a model problem and derive the  optimality conditions associated to the optimization problem as well as the optimization algorithms at the continuous level. Section \ref{sec:Geometry} is devoted to the introduction of the transfer path method and the technical assumptions related to the geometry and triangulation. The unfitted HDG discretizations of the state, adjoint and deformation variables are introduced in Section \ref{sec:hdg_method}, followed by the corresponding \textit{a priori} error analysis in Section \ref{sec:a_priori_error_estimates}. Finally, using manufactured examples, Section \ref{sec:numerical_experiments} presents numerical experiments showcasing the convergence of the discrete variables and the behavior of the suggested shape--optimization algorithm.  
%
\section{The model problem}\label{sec:model_problem}
%
Let $\mathcal U \subset \mathbb R^d$ be a fixed domain, $\mu(\cdot)$ denote the Lebesgue measure in $\mathbb R^n$, and define the set
\[
\mathcal O := \left\{ \Omega: \Omega\subset \mathcal U \;\text{ and }\; \mu(\Omega) = m_0\right\}\,.
\]
We will refer to any $\Omega\in\mathcal O$ as an \textit{admissible domain} and will denote its boundary as $\Gamma:=\partial\Omega$. For a fixed \textit{target function} $\widetilde y \in H^1(\mathcal U)$ we will define the energy functional $J:\mathcal O \times H^1(\mathcal U) \, \to \, \mathbb R$ as
\[
J(\Omega;y): = \frac{1}{2}\,\int_\Omega(y\,-\,\widetilde y)^2. 
\]
In this work we will consider the model problem of finding a domain $\widehat{\Omega}\in\mathcal O$ such that
\begin{equation}\label{min_J}
    \left(\widehat{\Omega},y\right) \,=\, \argmin_{\Omega\in \mathcal{O}} J\left(\Omega; y\right)
\end{equation}
where the function $y$ is subject to the \textit{state equation}
\begin{subequations}
\label{pde_1}
\begin{alignat}{2}
-\,\nabla\cdot(a\,\nabla y) & \,=\, f \quad &&\mbox{ in } \Omega\,, \\
y & \,=\, g \quad && \mbox{ on }\Gamma.
\end{alignat}
\end{subequations}
Above, the scalar function $a$ is assumed to be such that there exist real numbers $\underline a$ and $\overline a$ satisfying
\[
0< \underline a \leq a \leq \overline a <\infty \qquad \text{ almost everywhere on }\mathcal U\,,
\] and $f\in H^1(\mathcal{U})$ and $g\in H^2(\mathcal{U})$ are given problem data. Note that, since $y$ is defined as the solution of a PDE defined on $\Omega$, it is itself a function of $\Omega$. To keep notation as light as possible, we will avoid denoting this fact explicitly as $y(\Omega)$, but the reader should keep this dependence in mind. 

If we consider a Lipschitz mapping $\bm V:\mathcal{U}\to \mathbb{R}^d$ and small deformations of the domain $\Omega$ of the form
\[
\Omega \ni \boldsymbol x \mapsto \boldsymbol x  + \epsilon \boldsymbol V(\boldsymbol x) \qquad \text{ for } 0<\epsilon< 1,
\]
the variation $\updelta J(\Omega;\bm V)$ of the functional $J(\Omega; y)$ can be characterized (cf. \cite[Chapter 11]{manzonioptimal}) in terms of the problem data, $g$ and $\widetilde y$, and the \textit{adjoint function} $z$, as
\begin{equation}
\label{deltaJ}
    \updelta J(\Omega;\bm V)\,=\, \int_{\Gamma}G(\Gamma)\,\bm V\cdot \bm n,
\end{equation}
where 
\begin{equation}\label{eq:G}
    G(\Gamma) :=\, a\,\partial_{\bm n}z\,\partial_{\bm n}(y\,-\,g)\,+\,\,\dfrac{1}{2}(g\,-\,\widetilde{y})^{2}\,, \qquad 
\end{equation}
and $z\in H^2(\Omega)$ is the solution to the \textit{adjoint problem}
\begin{equation}\label{adjoint_eq_mp}
    \begin{array}{r c l c}
        -\,\nabla\cdot(a\,\nabla z) & \,=\, & y\,-\,\widetilde{y} & \qquad \mbox{ in }\Omega\,,  \\[2ex]
        z & \,=\, & 0 & \qquad \mbox{ on }\Gamma\,. 
    \end{array}
\end{equation}

Therefore, we look for a domain $\widehat{\Omega}$ that satisfies
$\updelta J(\widehat{\Omega};\bm V)\,=\,0\,$ for some admissible direction $\bm V$ that we will call the \textit{deformation field} and will be specified below.

It is known \cite{HeSo2005} that shape optimization problems generally do not have a unique solution because the optimality condition $\updelta J(\widehat{\Omega};\bm V)\,=\,0\,$ does not uniquely determine $\bm V$. For instance, to first order, deformations in the direction tangential to $\Gamma$ will not affect the value of the functional. Hence, a common choice is to fix a small portion, $\Gamma_D$, of the boundary and to look for a deformation field that satisfies
\begin{equation}
\label{pde_V}
    \begin{array}{r c l c}
        -\,\Delta \bm V & = & \bm 0 & \quad \mbox{ in }\Omega, \\[2ex]
        \partial_{\bm n}\bm V & = & -\,G(\Gamma)\,\bm n & \quad \mbox{ on }\Gamma_{N},\\[2ex]
        \bm V &=& \bm 0& \quad \mbox{ on }\Gamma_D,
    \end{array}
\end{equation}
where $\Gamma_N$ is the piece of the boundary that can be deformed and $\Gamma_D$ is the piece of the boundary that will remain fixed and $\Gamma = \Gamma_D \cup \Gamma_N$. We will assume that $\mu(\Gamma_N)\,\neq \,0$ and $\mu(\Gamma_D)\,\neq \,0$.

The space
\begin{equation*}
    [H_{D}(\Omega)]^{d}\,:=\,\{\bm w \in [H^{1}(\Omega)]^{d},\quad \bm w \,=\, \bm 0 \mbox{ on } \Gamma_D\}.
\end{equation*}
will be used to enforce the Dirichlet boundary condition in the weak formulation for the problem \eqref{pde_V}, which we can now state as 
\begin{align}
\label{weak_pde_V}
    & \text{Find } \bm V\in [H_{D}(\Omega)]^d \;\; \text{ such that } \\
    \nonumber
    &\int_{\Omega}\nabla \bm V\bm :\nabla \bm w
    +\int_{\Gamma_N}G(\Gamma)\,\bm w\cdot\bm n\,=\,0 \quad \forall\,\,\bm w\in [H_{D}^1(\Omega)]^d\,.
\end{align}
Here, for any tensor fields $\bm\psi \,=\, (\psi_{ij})_{i,j=1,n}$ and $\bm\zeta \,=\, (\zeta_{ij})_{i,j=1,n}$, the tensor inner product is defined as
\[
    \bm\psi\bm :\bm \zeta \,=\, \sum_{i=1}^n\sum_{j=1}^n\psi_{ij}\,\zeta_{ij}.
\]
Thus, by letting $\bm w \,=\, \bm V$, it follows from the weak formulation \eqref{weak_pde_V} and equation \eqref{deltaJ} that
\begin{equation}
\label{descent_direction}
    \updelta J(\Omega;\bm V) \,=\, \int_{\Gamma_N}G(\Gamma)\,\bm V\cdot\bm n\,=\,-\,\int_{\Omega}|\nabla \bm V|^2\,\leq 0\,,
\end{equation}
which implies that deformations of $\Omega$ in a direction $\bm V$ that satisfies \eqref{pde_V} guarantee a decrease in the value of the functional. Note that this is not the only possible choice for $\bm V$, but we will stick to it for the remainder of this work.

The previous argument motivates the following algorithm to compute an approximation for $\Omega^{\text{opt}}$ based on the gradient descent method \cite[Algorithm 11.1]{manzonioptimal}. Starting from an initial guess $\Omega^{(0)}$, the approximation can be updated iteratively as
\[
\Omega^{(k+1)}\,=\,(I\,+\, \tau_k\,\bm V^{(k)})\Omega^{(k)} \quad (k\in \mathbb{N})\,,
\]
where $\tau_k$ is a scalar parameter, referred to as the \textit{step size}, that controls the size of the deformation (its precise value will be determined later) and the descent direction $\bm V^{(k)}$ is the solution to
\begin{subequations}\label{pde_Vk}
\begin{alignat}{6}
        -\,\Delta \bm V^{(k)} =&\, \bm 0 \qquad&& \text{ in }\Omega^{(k)}, \\[1ex]
        \partial_{\bm n}\bm V^{(k)}  = &\, -\,G(\Gamma^{(k)})\,\bm n  \qquad&& \mbox{ on }\Gamma_{N}^{(k)},\\[1ex]
        \bm V^{(k)}  = &\, \bm 0 \qquad&& \mbox{ on }\Gamma_{D}^{(k)}.
\end{alignat}
\end{subequations}
Analogously to \eqref{descent_direction} we can prove that $\bm V^{(k)}$ is a descent direction for each $k\in\mathbb{N}$, and therefore 
\[
J(\Omega^{(k+1)},y^{(k+1)}) \,\leq\, J(\Omega^{(k)}, y^{(k)})\quad \forall \,k\in \mathbb{N}\,.
\]
Generating the Neumann data for the problem above requires the successive solution of the state equation \eqref{pde_1} and the adjoint equation \eqref{adjoint_eq_mp} at every iteration. The process can be repeated until the value of the functional falls below a predetermined tolerance $\text{TOL}>0$. 

In summary, given an initial guess for the domain $\Omega^{(0)}$ and a desired tolerance TOL for the value of the functional $J(\Omega,y)$ the sequence of steps to obtain an approximation to the shape optimal shape, is 

\begin{algorithm}[H]
\caption{\cite[Algorithm 11.1]{manzonioptimal}}\label{alg_shp_opt}
\begin{algorithmic}
\Require Initial domain $\Omega^{(0)}$ and tolerance parameter TOL
\State $y^{(0)} \gets y(\Omega^{(0)})$ by solving the state equation (cf. \eqref{pde_1}) in $\Omega^{(0)}$
\State $z^{(0)} \gets z(\Omega^{(0)})$ by solving the adjoint equation (cf. \eqref{adjoint_eq_mp}) in $\Omega^{(0)}$
\State compute $J(\Omega^{(0)})$
\State compute $G(\Gamma^{(0)})$
\State compute a deformation field $\bm V^{(0)}$ by solving \eqref{pde_Vk}
\State $k \gets 0$
\While{$|\updelta J(\Omega^{(k)};\bm V^{(k)})/\updelta J(\Omega^{(0)};\bm V^{(0)})|>$ TOL \Or  $\,|J(\Omega^{(k)})-J(\Omega^{(k-1)})|>$ TOL } 
\State compute the step size parameter $\tau_{k}$ with a line search routine
\State $\Omega^{(k+1)} \gets (I +\tau_{k}\bm V^{k})(\Omega^{(k)})$

\State $y^{(k+1)}\gets y(\Omega^{(k+1)})$ by solving the state equation (\ref{pde_1}) in $\Omega^{(k+1)}$
\State $z^{(k+1)}\gets z(\Omega^{(k+1)})$ by solving the adjoint equation (\ref{adjoint_eq_mp}) in $\Omega^{(k+1)}$
\State compute $J(\Omega^{(k+1)})$
\State compute $G(\Gamma^{(k+1)})$
\State compute a deformation field $\bm V^{(k+1)}$ by solving \eqref{pde_Vk}
\State $k\gets k+1$
\EndWhile
\end{algorithmic}
\end{algorithm}

In practice, the \textit{volume constraint} on the admissible domains $\mu(\Omega) = m_0$ can be challenging to enforce computationally. Thus, we follow \cite[Section 11.5.1]{manzonioptimal}, and introduce a Lagrange multiplier $\xi$ and the functional
\[
\widetilde{J}(\Omega,\xi;y): = J(\Omega;y)+\xi( \mu(\Omega)-m_0). 
\]

The variation $\updelta \widetilde{J}(\Omega,\xi;\bm V)$ of the functional $\widetilde{J}(\Omega,\xi; y)$ can be characterized by 
\[
\updelta \widetilde{J}(\Omega,\xi;\bm V) = \updelta J(\Omega;\bm V)+\xi\int_\Gamma \boldsymbol{V}\cdot \boldsymbol{n}= \int_\Gamma \widetilde{G}(\Gamma,\xi)\boldsymbol{V}\cdot \boldsymbol{n},
\]
where $\widetilde{G}(\Gamma,\xi):=G(\Gamma)+\xi$. We compute $\boldsymbol{V}$ by solving \eqref{pde_V} using the quantity $-\widetilde{G}(\Gamma)\boldsymbol{n}$ as a Neumann boundary condition. In addition, the Lagrange multiplier is updated every iteration according to
\begin{align}\label{LM:update}
    \xi^{(k+1)} = \dfrac{\xi^{(k)}+\chi(\Gamma)}{2}+\epsilon (\mu(\Omega)-m_0), \quad \text{ with } \quad \chi(\Gamma) :=-\frac{1}{\mu(\Gamma)}\int_{\Gamma} G(\Gamma)\,,
\end{align}
and $\epsilon>0$ a sufficiently small fixed constant. This leads to the following modified algorithm:

\begin{algorithm}[H]
\caption{Modification of Algorithm \ref{alg_shp_opt}}\label{alg2_shp_opt}
\begin{algorithmic}
\Require Initial domain $\Omega^{(0)}$,  parameters TOL and $\epsilon$
\State $y^{(0)} \gets y(\Omega^{(0)})$ by solving the state equation (cf. \eqref{pde_1}) in $\Omega^{(0)}$
\State $z^{(0)} \gets z(\Omega^{(0)})$ by solving the adjoint equation (cf. \eqref{adjoint_eq_mp}) in $\Omega^{(0)}$
\State compute $\chi(\Gamma^{(0)})$ and set $\xi^{(0)}:=\chi(\Gamma^{(0)})$
\State compute $\widetilde{J}(\Omega^{(0)},\xi^{(0)})$
\State compute $\widetilde{G}(\Gamma^{(0)},\xi^{(0)})$
\State compute a deformation field $\bm V^{(0)}$ by solving \eqref{pde_Vk} with  $\widetilde{G}(\Gamma^{(0)},\xi^{(0)})$ as Neumann boundary data
\State $k \gets 0$
\While{$|\updelta \widetilde{J}(\Omega^{(k)},\xi^{(k)};\bm V^{(k)})/\updelta \widetilde{J}(\Omega^{(0)},\xi^{(0)};\bm V^{(0)})|>$ TOL \Or  $\,|\widetilde{J}(\Omega^{(k)},\xi^{(k)})-\widetilde{J}(\Omega^{(k-1)},\xi^{(k-1)})|>$ TOL } 
\State for a fixed $\xi^{(k)}$, compute the step size parameter $\tau_{k}$ by a line search routine.
\State $\Omega^{(k+1)} \gets (I +\tau_{k}\bm V^{k})(\Omega^{(k)})$
\State $y^{(k+1)}\gets y(\Omega^{(k+1)})$ by solving the state equation (\ref{pde_1}) in $\Omega^{(k+1)}$
\State $z^{(k+1)}\gets z(\Omega^{(k+1)})$ by solving the adjoint equation (\ref{adjoint_eq_mp}) in $\Omega^{(k+1)}$
\State compute $\chi(\Gamma^{(k+1)})$ and set $\xi^{(k+1)}=\dfrac{\xi^{(k)}+\chi(\Gamma^{(k+1)})}{2}+\epsilon (\mu(\Omega^{(k+1)})-m_0)$
\State compute $\widetilde{J}(\Omega^{(k+1)},\xi^{(k+1)})$
\State compute $\widetilde{G}(\Gamma^{(k+1)},\xi^{(k+1)})$
\State compute a deformation field $\bm V^{(k+1)}$ by solving \eqref{pde_Vk}  with  $\widetilde{G}(\Gamma^{(k+1)},\xi^{(k+1)})$ as Neumann boundary data
\State $k\gets k+1$
\EndWhile
\end{algorithmic}
\end{algorithm}

%
\section{Unfitted geometric discretization}\label{sec:Geometry}
%
In addition to the solution to three boundary value problems at every step, Algorithm \ref{alg_shp_opt} requires the update of the domain $\Omega^{(k)}$ where the state, adjoint, and deformation equations will be posed on the next iteration. For many numerical schemes, this poses the need to repeatedly generate a new computational mesh. This requirement can render an algorithm too costly, especially if the iteration count is high---which is typically the case. Moreover, to achieve a highly accurate approximation of $\Omega^\text{opt}$, traditional methods would require either the use of an extremely fine interpolatory mesh, or an isogeometric triangulation, or a high order curvilinear mesh. All of these methods can produce a precise description of the target geometry, but they achieve so at an additional computational cost per step that may result unacceptable if the iteration count is high---which is typically the case.

Due to these particularities, the transfer path method \cite{CoQiuSo2014,CoSo2012}---that we describe below---provides an ideal tool for dealing with this iterative process, as it obviates both the need to create a new mesh with every successive iteration and the requirement of a mesh that finely captures the geometric properties of the boundary $\Gamma^{(k)}$. These characteristics can dramatically decrease the additional costs associated with the geometric approximation. The aim of this paper is to propose and analyze unfitted HDG discretizations that take advantage of the transfer path method for the three relevant equations in the process. Here, we describe the geometric setting for the computations and the transfer path method.

\textbf{The computational domain}\\
We will assume that the domain $\Omega$ has a Lipschitz boundary $\Gamma$ and will choose a background polyhedral domain $\mathcal{M}$ such that $\Omega \subset \overline{\mathcal U} \subset \overline{\mathcal M}$. We will denote a generic triangulation (or tetrahedrization, depending on the dimension) of $\mathcal M$ by $T_h$ and a generic element in the triangulation by $K$. As usual, we will denote
\[
h:= \max_{K\in T_h}\left\{\text{diam}(K)\right\} \qquad \text{ and } \qquad \underline h:= \min_{K\in T_h}\left\{\text{diam}(K)\right\}.
\]
We refer to $T_h$ as a \textit{background triangulation} and define 
\begin{equation}
\label{eq:triangulation}
\mathcal{T}_h := \{K\in T_h: K \subset \overline\Omega\},
\end{equation}
i.e. the set of all the elements $K\in T_h$, which are completely contained in $\Omega$. For any given background triangulation $T_h$ we will define the \textit{computational domain} as 
\[
D_h : = \left\{x\in \Big(\bigcup_{K\in \mathcal{T}_h}\overline{K}\Big)^{\circ}\right\}
\]
and will refer to its boundary $\Gamma_h \,:=\, \partial D_h$ as the \textit{computational boundary}. An example of the construction of $D_h$ is shown in the left panel of Fig. \ref{fig:transferpaths}.

We will say that $e$ is an interior edge or face of the triangulation $\mathcal T_h$ if there are two elements $K^{+}$ and $K^{-}$ in $\mathcal{T}_h$, such that, $e \,=\, \partial K^{+} \cap \partial K^{-}$. In the same way, we will say that $e$ is a boundary edge or face if there is an element $K \in \mathcal{T}_h$ such that $e \,=\, \partial K \cap \Gamma_h$.  We will let $\mathcal{E}_h$ be the set of all edges or faces of the triangulation, $\mathcal{E}_h^\circ$ be the set of interior edges or faces, and $\mathcal{E}_h^{\partial}$ the set of exterior edges or faces of $\mathcal{T}_h$. Thus $\mathcal{E}_h\,=\, \mathcal{E}_h^\circ\cup\mathcal{E}_h^{\partial}$.

The outward unit normal of the element $K\in \mathcal{T}_h$ will be denoted by $\bm n$ and will denote it by $\bm n_e$ whenever we want to emphasize that $\bm n$ is the normal to a particular face $e$. Moreover, for each edge or face $e$ of $K$, we denote the height of the element with respect to that edge or face as $h_e^{\perp}$. On a similar vein, for every boundary edge/face $e\in\partial\mathcal E_h$, we define $H_e^{\perp}$ as the length of the longest segment connecting $e$ and $\Gamma$ that is both parallel to the normal direction $\bm n_e$ and completely contained in $K_{ext}^e$. We then define the parameter
\[
    r_e\,:=\, H_e^{\perp}/\,h_e^{\perp}\,,
\]
that serves as a measure of the local distance between $\Gamma$ and $\Gamma_h$, relative to the local mesh size. Related to this, we have the \textit{proximity parameter}
\[
    R \,:=\, \max_{e\in\mathcal{E}_h^{\partial}}r_e\,.
\]

\textbf{Admissible triangulations}\\
For all the analysis that follows, we will consider only families $\{\mathcal T_h\}_{h>0}$ of admissible triangulations, where a triangulation $\mathcal T_h$ is said to be \textit{admissible} if the following conditions are satisfied:
\begin{enumerate}
\item There exists a constant $r>0$ such that $r h \leq \underline h$. This condition is known as \textit{quasi uniformity}.
\item  There exists $\rho >0 $ such that, for any triangle $K\in\mathcal{T}_h$ we have that $\diam(B_K)\,\geq\, \rho\, \diam(K)$, where $B_K$ is the biggest ball inscribed in the element $K$. This condition is called \textit{uniform shape regularity}.

\item If for any two distinct elements $K,K^\prime\in\mathcal T_h$ it holds that the intersection $K\cap K^\prime$ is either empty, it consists of a single common vertex or a single common edge/face. When this holds we say that the triangulation has no \textit{hanging nodes}. This is in fact not a necessary, but it considerably simplifies the analysis \cite{BeMaSo2025,ChCo2012,ChCo2013}.
\item\label{TransferPath} There exists a bijective function
\begin{equation}\label{eq:Phi}
\boldsymbol \phi: \Gamma_h \longrightarrow \Gamma
\end{equation}
That, for every point $\bm x\in \Gamma_h$, assigns a point $\bar{\bm x}:=\boldsymbol \phi(\boldsymbol x)\in \Gamma$ such that the straight segment connecting $\overline{\boldsymbol x}$ to $\boldsymbol x$ satisfies the following conditions:
\begin{enumerate}
\item Does not intersect the interior of the computational domain $D_h$.
\item $|\bm x - \bar{\bm x}|\leq C h^{n+\delta}$, for some $C>0$, $n\in\mathbb N$ and $0<\delta<1$. We refer to this as the \textit{local proximity condition of order} $n$. For Dirichlet problems, it is enough to require $n=1$  \cite{SaSaSo:2019,SaCeSo2020}. This condition can also be expressed as
\begin{equation}\label{eq:localProximity}
 \max_{e \in \mathcal{E}_h^{\partial}} H_e^\perp \leq C h^{n+\delta}\,.
\end{equation}
\vspace{-.5cm} 
\item There exist generic positive constants such that
\begin{equation}
\label{eq:equivalence}
\|\psi \boldsymbol n \circ\bm\phi\|_{\Gamma_h}\lesssim \|\psi\boldsymbol n\|_\Gamma \lesssim \|\psi\boldsymbol n \circ\bm\phi\|_{\Gamma_h}\,
\end{equation}
for every $\psi\in H^1(\Omega)\cap H^1(\Omega_h)$.
\end{enumerate}
\end{enumerate}
Up to this point, the conditions imposed on admissible triangulations have had a purely geometric character. They have pertained solely to the capability of a family of triangulations to smoothly approximate the desired domain as the mesh is refined. The next batch of conditions considers the interaction of the physical parameters, the geometry and a stabilization parameter characteristic of HDG. In particular, they will determine the maximum admissible distance between the computational and physical boundaries in terms of the physical parameters, the stabilization parameter and the polynomial degree of the approximation.

We start by introducing the a pair of edge--wise constant functions that will be useful in the convergence analysis
\[
C^{ext}_{e} :=\,\frac{1}{\sqrt{r_e}}\,\sup_{\bm\zeta\in [\mathbb{P}_k(K^{e})]^{d}\cdot\bm n_e\backslash\{\bm 0\}}\frac{\norm{\bm\zeta}_{K_{ext}^{e}}}{\norm{\bm \zeta}_{K^{e}}}\,, \;\qquad \text{ and } \;\qquad
        C^{inv}_{e} :=\, h_e^{\perp} \,\sup_{\bm \zeta\in [\mathbb{P}_k(K^{e})]^{d}\cdot \bm n_{e}\backslash\{\bm 0\}}\frac{\norm{\partial_{\bm n_e}\bm \zeta}_{K^{e}}}{\norm{\bm \zeta}_{K^{e}}}\,,
\]
where $\mathbb{P}_k(D)$ denotes the set of polynomials of degree at most $k$ over the domain $D$. As proven in \cite{CoQiuSo2014}, these functions can be bounded globally in terms of the polynomial degree of the approximation $k$ and the mesh regularity parameter as

\begin{subequations}
\label{constants_C}
\begin{minipage}{0.49\linewidth}
\begin{equation}
\label{constants_C_a}
C^{ext}_{e} \leq C_1(k+1)^2(3\beta+2)^k
\end{equation}
\end{minipage}
\begin{minipage}{0.49\linewidth}
\begin{equation}
\label{constants_C_b}
C^{inv}_e \leq C_2k^2
\end{equation}
\end{minipage}
\end{subequations}

Finally, we introduce the mesh--dependent function $\tau: \mathcal E_h \to \mathbb R$, which will act as a stabilization parameter and will be used in the analysis of sections \ref{sec:hdg_method} and \ref{sec:a_priori_error_estimates}. In the most general case, this stabilization function needs only to be strictly positive and essentially bounded (i.e. $\tau\in L^\infty(\mathcal E_h)$ ). However, in this work we will consider it to be simply any  positive constant. 

With these definitions we are prepared to establish our next set of assumptions, which will be used from now on for the analysis in the following sections. 
\begin{enumerate} \setcounter{enumi}{4}
\item For every $e\in\mathcal E^\partial_h$, the maximum distance between $\Gamma$ and $\Gamma_h$ satisfies
\begin{equation}\label{eq:Hbound}
H^\perp_e \leq \left(4\tau \cdot \min\left\{1,\max_{\boldsymbol x \in \Gamma_h}(1+a^{-1}) \right\}\right)^{-1}\,.
\end{equation}
Regarding this somewhat esoteric condition we can state the following. Recalling that $a^{-1}$ is the reciprocal of the diffusivity coefficient, it informs about the possible formation of boundary layers---which are likely to be present if $a^{-1}$ is large. Therefore, the distance between the physical and computational boundaries must be aware about this value. On the other hand, the stabilization parameter $\tau$ penalizes the size of the jump of the discrete approximations across adjacent elements. A large value of $\tau$ will reduce the size of the jump and will make it more difficult to approximate a steep gradient (or a boundary layer) across an element. If this is the case, the distance between boundaries must be reduced.
\item The local proximity parameter must satisfy
\begin{equation}\label{eq:Rbound}
R < 2^{-1/3}\left(C^{inv}_e C^{ext}_e\right)^{-2/3}\,.
\end{equation}
In view of the estimates \eqref{constants_C} and the fact that---as we will soon explain in detail---we will extrapolate some approximations from $\Gamma_h$ to $\Gamma$, this estimate sets an acceptable ratio in terms of the polynomial degree of the approximation.
\end{enumerate}

\textbf{Transfer paths} \\
We must now describe how to transfer the boundary data from the problem boundary $\Gamma$ to the computational boundary $\Gamma_h$. We will refer to the straight segment from condition \ref{TransferPath} above as a \textit{transfer path} associated to $\bm x$. We will denote its unit tangent vector and its length respectively by
\[
\bm t(\bm x): = (\boldsymbol x - \overline{\boldsymbol x})/|\boldsymbol x - \overline{\boldsymbol x}|, \qquad \text{ and } \qquad l(\boldsymbol x) := |\boldsymbol x  - \overline{\boldsymbol x}|.
\]
This construction is represented schematically in the center--left panel of Fig. \ref{fig:transferpaths}.

Since $\boldsymbol \phi$ is a bijection, each boundary face $e\in\partial\mathcal E_h$ will be identified with its corresponding image in $\Gamma$, which we will denote by
\[
\Gamma_e := \{\overline{\boldsymbol x}\in \Gamma: \overline{\boldsymbol x} = \boldsymbol\phi(\boldsymbol x) \;\; \text{ for some }\; \boldsymbol x\in e \}.
\]
A depiction of the segment $\Gamma_e$ associated to an edge $e$ is shown in the center right panel of Fig. \ref{fig:transferpaths}. An algorithm for constructing a collection of transfer paths,  satisfying the additional condition that no transfer paths do not intersect each other, was developed in \cite{CoQiuSo2014} for the two dimensional case. The intersection--avoiding condition is not necessary for the analysis, but provides a simple and natural way to construct the extension patches that will be defined below.

\begin{figure}[tb]
\begin{tabular}{cccc}
\centering
\includegraphics[width=0.22\textwidth]{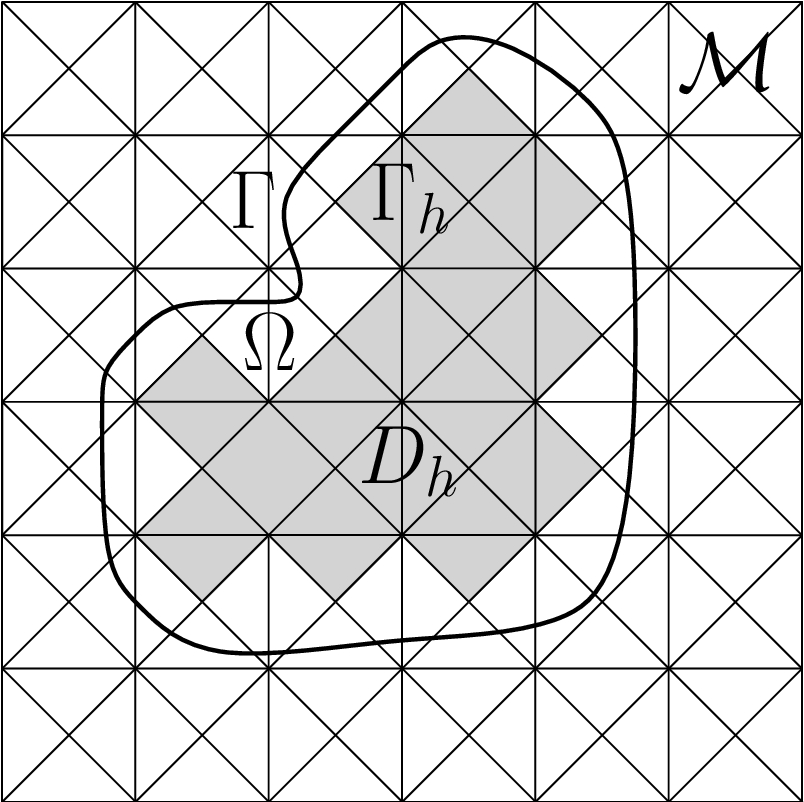} &
  \includegraphics[width=0.22\linewidth]{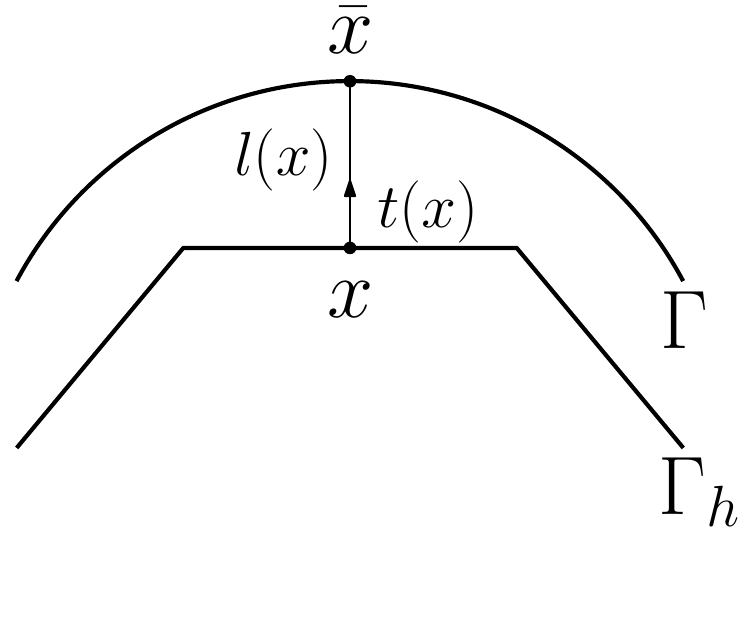}
&
\includegraphics[width=0.27\textwidth]{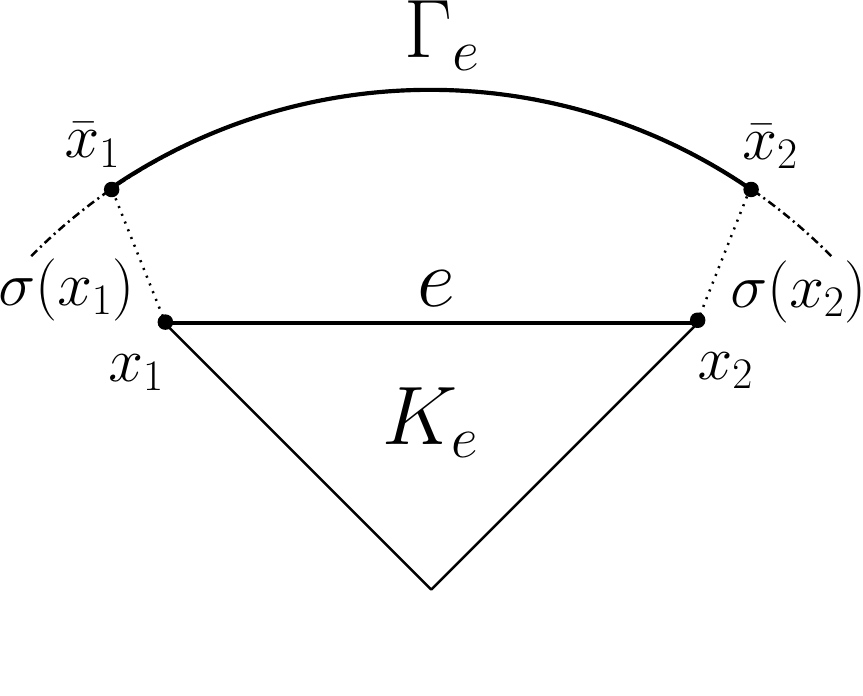}  &
  \includegraphics[width=0.2\linewidth]{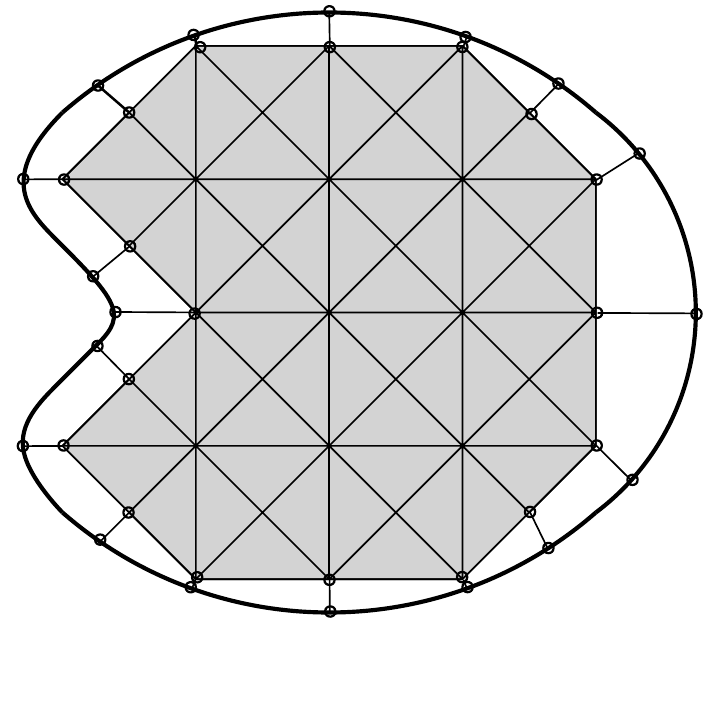} 
\end{tabular}
\caption{Left: Example of a domain $\Omega$, its boundary $\Gamma$, a background domain $\mathcal{M}$ and the construction of the computational domain $D_h$, shaded in gray. Center left: A transfer path associated to $\bm x$. Center right: The extension patch $K_e^{ext}$ is the region enclosed by the edge $e$, the segment $\Gamma_e$ and the transfer paths $\sigma(\boldsymbol x_1)$ and $\sigma(\boldsymbol x_2)$. Right: Tessellation of the full domain. The extension patches are the white tiles filling the space between $\Gamma$ and $\Gamma_h$.}
\label{fig:transferpaths}
\end{figure}
%
\textbf{Extension patches}\\
For computational purposes that will become clear soon, we will have to tessellate the complement of the computational domain $D_h^c\,:=\, \Omega\backslash\overline{D_h}$ in such a way that there is a one-to one correspondence between boundary edges $e\in \mathcal{E}_h^\partial$ and tiles in the tessellation.  We will denote by $K_{e}^{ext}$ the only tile that has $e$ as a face  and will refer to it as an \textit{extension patch}. One possible way of constructing such a tessellation is to use the algorithm from \cite{CoQiuSo2014} and define the extension patch $K_e^{ext}$ as the region bounded by the edge $e$, the transfer paths associated to the endpoints of $e$ and the segment of $\Gamma$ delimited by the transfer paths, as depicted in the two panels on the right of \eqref{fig:transferpaths}.

If $K_e$ is the element that shares the edge $e$ with the extension patch $K^{ext}_e$ and $p$ is a polynomial defined over $K_e$, we define the extrapolation of $p$ from $K_e$ to $K_e^{ext}$ by
\[
E_h(p)(y) \,:=\, p(y) \;\; \text{for all } \;\;y \in K_e^{ext}.
\]
To simplify notation, whenever possible we will simply notation by simply writing $p(y)$ understanding that the extrapolation operator is used tacitly.

%
\section{Unfitted HDG discretizations}\label{sec:hdg_method}
%
In this section we present and analyze HDG schemes for the state, adjoint and deformation field equations---the latter inspired by \cite{QiSoVe2016}. All the schemes will make use of the transfer path technique \cite{CoSo2012,CoQiuSo2014} introduced in the previous section. This will allow us to pose the HDG discretization using only polyhedral elements, even in domains which are not necessarily polyhedral.

The discretization will use the following global polynomial spaces:
\begin{subequations}
    \begin{alignat}{6}
        \bm Z_h&\,:=\,\left\{\bm v\in [L^{2}(\mathcal{T}_{h})]^{d}\,:\, \bm v|_{K}\in [\mathbb{P}_{k}(K)]^{d}\quad \forall\, K\in \mathcal{T}_h\right\}\,,\\
        W_h&\,:=\,\left\{w\in L^{2}(\mathcal{T}_h)\,:\,w|_{K}\in \mathbb{P}_k(K)\quad \forall\, K\in \mathcal{T}_h\right\}\,,\\
        M_h &\,:=\, \left\{\mu \in L^{2}(\mathcal{E}_h)\,:\, \mu|_{e}\in \mathbb{P}_{k}(e)\quad \forall \,e\in\mathcal{E}_h \right\}\,.
    \end{alignat}
\end{subequations}
We will also use the following inner products associated to $\mathcal{T}_h$ and $\partial\mathcal{ T}_h$
\[
(u,v)_{D_h}\,:=\, \sum_{K\in \mathcal{T}_h}\int_{K}u\,v \qquad \mbox{ and } \qquad  \langle s,t\rangle_{\mathcal E_h}\,:=\, \sum_{K\in\mathcal T_h} \int_{\partial K}s\,t\,,
\]
which induce the norms
\[
\norm{u}_{D_h}\,:=\, (u,u)_{D_h}^{1/2} \qquad \text{ and } \qquad \norm{s}_{\mathcal E_h}\,:=\,\langle s, s\rangle_{\mathcal E_h}^{1/2}.
\]
If $w$ is a positive function defined along every edge of the triangulation, we define the weighted norms
\begin{equation}\label{eq:weightedNorms}
    \norm{v}_{\mathcal E_h,w}
    \,:=\, \|w^{1/2}v\|_{\mathcal E_h}
    \qquad \mbox{ and } \qquad 
    \norm{v}_{\Gamma, w}
    \,:=\, \left(\sum_{e\subset \Gamma_h}\int_e w\,|v|^2\right)^{1/2}\,.
\end{equation}
The same notation will be used for tensor- and vector-valued polynomial functions defined on $K^e$.

\textbf{State and adjoint equations}\\
As proposed in by \cite{CoQiuSo2014}, we consider the strong mixed formulations for the state and adjoint equations, restricted to the polygonal domain $D_h$:

\begin{subequations}
\begin{minipage}{0.45\textwidth}
    \label{HDG_mixed_formualtion_state}
    \begin{align}
        \label{HDG_mixed_formualtion_state_a}
        a^{-1}\,\bm p \,+\,\nabla y &\,=\, 0 \quad& \mbox{in }D_h\,,\\
        \label{HDG_mixed_formualtion_state_b}
        \nabla\cdot \bm p &\,=\, f \quad& \mbox{in }D_h\,,\\
        \label{HDG_mixed_formualtion_state_c}
        y &\,=\, \varphi_1\quad& \mbox{on }\Gamma_h\,,
    \end{align}
\end{minipage}
\end{subequations}
\begin{subequations}
\begin{minipage}{0.45\textwidth}
    \label{HDG_mixed_formualtion_adjoint}
    \begin{align}
        \label{HDG_mixed_formualtion_adjoint_a}
        a^{-1}\,\bm r\,+\,\nabla z &\,=\, 0\quad& \mbox{in }D_h\,,\\
        \label{HDG_mixed_formualtion_adjoint_b}
        \nabla\cdot\bm r\,-\, y &\,=\, -\widetilde{y}\quad& \mbox{in }D_h\,,\\
        \label{HDG_mixed_formualtion_adjoint_c}
        z&\,=\, \varphi_2\quad& \mbox{on }\Gamma_h\,,
    \end{align}
\end{minipage}
\end{subequations}

where the unknowns $\varphi_1$ and $\varphi_2$ correspond to the traces of $y$ and $z$, respectively, on $\Gamma_h$. Let $\bm x\in \Gamma_h$ and $\bar {\bm x}$ its corresponding point on $\Gamma$. By integrating \eqref{HDG_mixed_formualtion_state_a} and \eqref{HDG_mixed_formualtion_adjoint_a} along the transfer path joining $\bm x$ and $\bar{\bm x}$, we obtain
    \begin{equation} \label{HDG_extension}
        \varphi_{1}(\bm x) \,:=\, g(\bar{\bm x})\,+\,\int_{0}^{l(\bm x)}a^{-1}\,\bm p\,(\bm x \,+\, s\,\bm t)\cdot\bm t\,ds \quad\;\mbox{ and }\;\quad
        \varphi_{2}(\bm x) \,:=\, \int_{0}^{l(\bm x)}a^{-1}\,\bm r\,(\bm x \,+\, s\,\bm t)\cdot\bm t\,ds\,. 
    \end{equation}
The HDG method for the state equation seeks an approximation $(\bm p_h, y_h,\widehat{y}_h)$ of the exact solution $(\bm p, y,y|_{\mathcal{E}_h})$ in the space $\bm Z_h\times W_h\times M_h$ satisfying
\begin{subequations}\label{HDG_state_eq}
    \begin{alignat}{6}
        \label{HDG_state_eq_a}
        (a^{-1}\,\bm p_h, \bm v_1)_{D_h}
        \,-\,(y_h,\nabla\cdot \bm v_1)_{D_h}
        \,+\,\langle\widehat{y}_h,\bm v_1\cdot\bm n_h\rangle_{\mathcal{E}_h} 
        & \,=\, 0\,,\\
        \label{HDG_state_eq_b}
        -\,(\bm p_h,\nabla w_1)_{D_h} 
        \,+\,\langle\widehat{\bm p}_h\cdot \bm n_h, w_1 \rangle_{\mathcal{E}_h} 
        & \,=\, (f,w_1)_{D_h}\,,\\
        \label{HDG_state_eq_c}
        \langle\widehat{\bm p}_h\cdot\bm n_h,\mu_1 \rangle_{\mathcal{E}_h\backslash \Gamma_h} 
        &\,=\, 0\,,\\
        \label{HDG_state_eq_d}
        \langle\widehat{y}_h,\mu_1 \rangle_{\Gamma_h} 
        &\,=\, \langle \varphi_1^h,\mu_1\rangle_{\Gamma_h}\,,
    \end{alignat}
\end{subequations}
for all $(\bm v_1,w_1,\mu_1)\in \bm Z_h\times W_h\times M_h$. In turn, the HDG method for the adjoint equation seeks and approximation $(\bm r_h,z_h,\widehat{z}_h)$ of the exact solution $(\bm r,z,z|_{\mathcal{E}_h})$ in the space $\bm Z_h\times W_h\times M_h$ such that
\begin{subequations}\label{HDG_adjoint_eq}
    \begin{alignat}{6}
        \label{HDG_adjoint_eq_a}
        (a^{-1}\,\bm r_h, \bm v_2)_{D_h}
        \,-\,(z_h,\nabla\cdot \bm v_2)_{D_h}
        \,+\,\langle\widehat{z}_h,\bm v_2\cdot\bm n\rangle_{\mathcal{E}_h} 
        & \,=\, 0\,,\\
        \label{HDG_adjoint_eq_b}
        -\,(\bm r_h,\nabla w_2)_{D_h} 
        \,+\,\langle\widehat{\bm r}_h\cdot \bm n_h, w_2 \rangle_{\mathcal{E}_h} 
        &\,=\, (y_h-\widetilde{y},w_2)_{D_h}\,,\\
        \label{HDG_adjoint_eq_c}
        \langle\widehat{\bm r}_h\cdot\bm n_h,\mu_2 \rangle_{\mathcal{E}_h\backslash \Gamma_h} 
        &\,=\, 0\,,\\
        \label{HDG_adjoint_eq_d}
        \langle\widehat{z}_h,\mu_2 \rangle_{\Gamma_h} 
        &\,=\, \langle \varphi_2^h,\mu_2\rangle_{\Gamma_h}\,,
    \end{alignat}
\end{subequations}
for all $(\bm v_2,w_2,\mu_2)\in \bm Z_h\times W_h\times M_h$. The functions  $\varphi_1^h$ and $\varphi_2^h$ appearing in the systems above are discrete analogs of \eqref{HDG_extension}. They transfer the Dirichlet boundary conditions from $\Gamma$ to $\Gamma_h$ and are defined along all faces in $\mathcal{E}_h^\partial$, by
\begin{alignat}{6}
\begin{split}
    \label{HDG_extensions_eq}
    \varphi_1^h(\bm x)&\,:=\, g(\bar{ \bm x})\,+\,\int_{0}^{l(\bm x)} a^{-1}\,E_h(\bm p_h)(\bm x\,+\,s\,\bm t(\bm x))\cdot\bm t(\bm x) \,ds\,,\\
    \varphi_2^h(\bm x)&\,:=\, \int_{0}^{l(\bm x)} a^{-1}\,E_h(\bm r_h)(\bm x\,+\,s\,\bm t(\bm x))\cdot\bm t(\bm x) \,ds\,,
\end{split}
\end{alignat}

where $E_h(\bm p_h)$ and $E_h(\bm r_h)$ are the extrapolation of $\bm p_h$ and $\bm r_h$ respectively. The numerical fluxes $\widehat{\bm p}_h$ and $\widehat{\bm r}_h$ are defined along every edge $e\in\mathcal E_h$ by
\begin{align}
        \label{Fluxes}
        \widehat{\bm p}_h \,=\, \bm p_h \,+\,\tau\, (y_h\,-\,\widehat{y}_h)\,\bm n_h\qquad\mbox{and}\qquad
        \widehat{\bm r}_h \,=\, \bm r_h \,+\,\tau\, (z_h\,-\,\widehat{z}_h)\,\bm n_h\,,
\end{align}
and $\tau$ is a positive and bounded stabilization function defined on $\mathcal{E}_h$, whose maximum and minimum values will be denoted respectively by $\overline{\tau}$ and $\underline{\tau}$.

The unfitted schemes in \eqref{HDG_state_eq} and \eqref{HDG_adjoint_eq} were studied in \cite{CoQiuSo2014}, where their well-posedness was established. We will use this result without not repeating the argument here. Instead we will now move on to study the discretization of the deformation field.

\textbf{The deformation field equation}\\
We now present the HDG scheme for the deformation field equation which is inspired by the work done on \cite{QiSoVe2016}. It shall be noted that, due to the presence of a Neumann boundary condition, the treatment cannot be the same as in the cases of the state and adjoint equations. In particular, a transfer function in the style of \eqref{HDG_extension} to transfer the Neumann data from the curved boundary $\Gamma$ to the polygonal computational boundary $\Gamma_h$ is not available. Instead, we will make use of the bijection $\bm\phi$ defined on \eqref{eq:Phi} to impose the Neumann condition on the computational boundary. Under these conditions, the deformation field equation can be written in the computational domain $D_h$ as follows:
\begin{subequations}
\label{mixed_Dh_velocity}
\begin{alignat}{6}
    \label{mixed_Dh_velocity_a}
    \bm\sigma \,+\,\nabla\bm V& \,=\, 0 &\quad\mbox{in }D_h\,,\\
    \label{mixed_Dh_velocity_b}
    \bdiv(\bm\sigma)& \,=\, \bm 0 &\quad\mbox{in }D_h\,,\\
    \label{mixed_Dh_velocity_c}
    \bm\sigma\,\bm n_h& \,=\, \bm g_{N} &\quad\mbox{on }\Gamma^{N}_h\,,\\
    \label{mixed_Dh_velocity_d}
    \bm V &\,=\, \bm g^{D} & \quad \mbox{on }\Gamma^{D}_h\,.
\end{alignat}
\end{subequations}
We assume that the computational boundary $\Gamma_h$ is split between $\Gamma_h^{D}$ (the part of $\Gamma_h$ with Dirichlet datum) and $\Gamma_h^{N}$ (the part of $\Gamma_h$ with Neumann datum) in such a way that
\[
\Gamma_h \,=\,\Gamma_h^{D}\,\cup\, \Gamma_h^{N} \qquad \text{ and }  \qquad \Gamma_h^{D}\,\cap\,\Gamma_h^{N}\,=\, \emptyset.
\]
The Neumann datum $\bm g_N$ is defined by
\[
    \bm g_N\,:=\,\left(G(\Gamma)\bm n\right)\circ\bm \phi\,, 
\]
Where $G(\Gamma)$ is as defined in \eqref{eq:G}, and the transfer function $\boldsymbol\phi$ is the one defined on \eqref{eq:Phi}.  Recalling the mixed variables
\[
\nabla y \,=\, -\, a^{-1}\,\bm p \qquad \text{ and } \qquad \nabla z \,=\, -\,a^{-1}\,\bm r
\]
introduced in the state and adjoint mixed formulations, we can rewrite $G(\Gamma)$ as
\begin{equation}\label{eq:G(Gamma)}
    G(\Gamma)\,=\,\bm r\cdot\bm n(a^{-1}\,\bm p\cdot\bm n\,+\,\partial_{\bm n} g)\,+\,\dfrac{1}{2}(g\,-\,\widetilde{y})^2\,.
\end{equation}
The Dirichlet datum $\bm g^D$ is transferred using the same technique used in the state and adjoint equations, that is,
\[
    \bm g^{D}(\bm x)\,:=\, \int_{0}^{l(\bm x)}\bm\sigma(\bm x\,+\,s\,\bm t)\,\bm t\,ds\,.
\]
Before presenting the discrete scheme, we define the vector--valued polynomial spaces used for the discretization, which are defined as
\begin{subequations}
\begin{align*}
    \mathbb{Z}_h &\,:=\, \{\bm \xi\in [L^{2}(\mathcal{T}_h)]^{d\times d}\,:\, \bm \xi|_{K}\in [\mathbb{P}_k(K)]^{d\times d}\,,\quad \forall \,K\in \mathcal{T}_h\}\,,\\
    \bm W_h &\,:=\, \{\bm w \in [L^{2}(\mathcal{T}_h)]^{d}\,:\, \bm w|_{K}\in [\mathbb{P}_{k}(K)]^{d}\,,\quad \forall\, K\in \mathcal{T}_h\}\,,\\
    \bm M_h &\,:=\, \{\bm \mu\in [L^{2}(\mathcal{E}_h)]^{d}\,:\,\bm \mu|_{e}\in [\mathbb{P}_{k}(e)]^{d}\,,\quad \forall\, e\in \mathcal{E}_h\}\,.
\end{align*}
\end{subequations}
Then, our discrete scheme seeks an approximation $(\bm\sigma_h,\bm V_h,  \widehat{\bm V}_h)\in \mathbb{Z}_h\times \bm W_h\times \bm M_h$ of the exact solution $(\bm \sigma, \bm V_h, \bm V_h|_{\mathcal{E}_h})$, which is given by
\begin{subequations}
\label{hdg_scheme_V_1}
\begin{alignat}{6}
    \label{hdg_scheme_V_1_a}
    (\bm \sigma_h,\bm \psi)_{D_h}
    \,-\,(\bm V_h,\bdiv(\bm \psi))_{D_h}
    \,+\,\langle \widehat{\bm V}_h,\bm\psi\,\bm n_h\rangle_{\mathcal{E}_h}
    &\,=\, 0\,,\\
    \label{hdg_scheme_V_1_b}
    -\,(\bm \sigma_h,\nabla\bm w)_{D_h}
    \,+\,\langle\widehat{\bm \sigma}_h\,\bm n_h,\bm w\rangle_{\mathcal{E}_h} 
    &\,=\, 0\,,\\
    \label{hdg_scheme_V_1_c}
    \langle\widehat{\bm \sigma}_h\,\bm n_h,\bm\mu\rangle_{\mathcal{E}_h\backslash\Gamma_h}
    &\,=\, 0\,,\\
    \label{hdg_scheme_V_1_e}
    \langle\widehat{\bm\sigma}_h\,\bm n_h,\bm\mu\rangle_{\Gamma_h^{N}}
    &\,=\, \langle (G_h(\Gamma)\bm n)\circ\bm\phi,\bm\mu\rangle_{\Gamma_h^N}\,,\\
    \label{hdg_scheme_V_1_d}
    \langle\widehat{\bm V}_h,\bm \mu\rangle_{\Gamma_h^{D}}
    &\,=\, \langle\bm g_h^D,\bm\mu\rangle_{\Gamma_h^{D}}\,,
\end{alignat}
for all $(\bm\psi, \bm w,\bm \mu)\in \mathbb{Z}_h\times\bm W_h\times \bm M_h$, where
\begin{align}
\label{trace_sigma_hat}
    \widehat{\bm\sigma}_h\,\bm n_h \,:=&\, \bm\sigma_h\,\bm n_h\,+\,\tau\,(\bm V_{h}\,-\,\widehat{\bm V}_h)\,,\\
\label{eq:G_h(Gamma)}
    G_h(\Gamma) \,:=&\, \bm r_h\cdot\bm n\,(a^{-1}\,\bm p_h\cdot\bm n\,+\,\partial_{\bm n}g)\,+\,\frac{1}{2}(g\,-\,\widetilde{y})^2\,,\\
\label{eq:g_hD}
    \bm g_h^D(\bm x)\,:=&\, \int_{0}^{l(\bm x)}E_h(\bm \sigma_h)(\bm x \,+\, s\,\bm t(\bm x))\,\bm t(\bm x)\, ds\,.
\end{align}
\end{subequations}
From now on, for the sake of simplicity, we assume $\bm t(\bm x) = \bm n_h$ for all $\bm x \in e$ and all $e\subset \Gamma_h^D$. Otherwise, the results follow from assuming $\bm t\cdot \bm n_h>0$ and analyzing the tangential and normal components separately. 

\begin{theorem}
If $\mathcal T_h$ is a sufficiently fine admissible triangulation, then there exists a unique solution of the HDG scheme \eqref{hdg_scheme_V_1} associated to the deformation field equation.
\end{theorem}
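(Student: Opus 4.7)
Since the scheme \eqref{hdg_scheme_V_1} is a finite-dimensional square linear system, existence and uniqueness are equivalent. I will therefore assume that the data vanishes (i.e.\ $G_h(\Gamma)\equiv\bm 0$, so that the Neumann datum in \eqref{hdg_scheme_V_1_e} is zero, while the Dirichlet transfer in \eqref{hdg_scheme_V_1_d} reduces to $\widehat{\bm V}_h=\int_0^{l(\bm x)} E_h(\bm\sigma_h)(\bm x+s\bm t)\bm t\,ds$ on $\Gamma_h^D$) and show that the only solution is $(\bm\sigma_h,\bm V_h,\widehat{\bm V}_h)=(\bm 0,\bm 0,\bm 0)$.

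\textbf{Energy identity.} First I would take the natural HDG test functions $\bm\psi=\bm\sigma_h$ in \eqref{hdg_scheme_V_1_a}, $\bm w=\bm V_h$ in \eqref{hdg_scheme_V_1_b}, and $\bm\mu=\widehat{\bm V}_h$ in both \eqref{hdg_scheme_V_1_c} and \eqref{hdg_scheme_V_1_e}, then add the resulting equations. An integration by parts combined with the flux definition \eqref{trace_sigma_hat} yields, after the standard cancellations familiar from HDG analysis, the identity
\begin{equation*}
\|\bm\sigma_h\|_{D_h}^2 \;+\; \|\tau^{1/2}(\bm V_h-\widehat{\bm V}_h)\|_{\mathcal E_h}^2 \;=\; -\,\langle\widehat{\bm V}_h,\widehat{\bm\sigma}_h\,\bm n_h\rangle_{\Gamma_h^D}\;=\;-\,\langle\bm g_h^D,\widehat{\bm\sigma}_h\,\bm n_h\rangle_{\Gamma_h^D}\,,
\end{equation*}
where the interior faces and $\Gamma_h^N$ contribute nothing thanks to \eqref{hdg_scheme_V_1_c} and the homogeneous version of \eqref{hdg_scheme_V_1_e}, and the Dirichlet trace was substituted from \eqref{hdg_scheme_V_1_d}.

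\textbf{Controlling the transfer-path remainder.} The key step, and the main technical obstacle, is bounding the right-hand side $\langle\bm g_h^D,\widehat{\bm\sigma}_h\bm n_h\rangle_{\Gamma_h^D}$ by a small multiple of the left-hand side. Using \eqref{eq:g_hD} together with the Cauchy--Schwarz inequality along each transfer path gives the pointwise bound $|\bm g_h^D(\bm x)|\le l(\bm x)^{1/2}\bigl(\int_0^{l(\bm x)}|E_h(\bm\sigma_h)|^2\,ds\bigr)^{1/2}$, and integrating over each edge $e\subset\Gamma_h^D$ and invoking the constant $C^{ext}_e$ (which quantifies the $L^2$-cost of extrapolating polynomials from $K^e$ to $K^{ext}_e$) produces
\begin{equation*}
\|\bm g_h^D\|_{\Gamma_h^D} \;\lesssim\; \max_{e}(H_e^\perp)\,(C^{ext}_e)^2\,\|\bm\sigma_h\|_{D_h}\,,
\end{equation*}
up to the face-wise weight $r_e$. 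Complementarily, the flux $\widehat{\bm\sigma}_h\bm n_h$ on $\Gamma_h^D$ is controlled by applying a trace/inverse estimate (with constant $C^{inv}_e$) to $\bm\sigma_h$ together with the stabilization bound on $\tau^{1/2}(\bm V_h-\widehat{\bm V}_h)$. Putting these two estimates together and invoking the admissibility hypotheses \eqref{eq:Hbound} and \eqref{eq:Rbound}, the product $\max_e H_e^\perp\cdot C^{ext}_e\cdot C^{inv}_e$ can be made strictly smaller than a universal fraction, so that Young's inequality yields
\begin{equation*}
\bigl|\langle\bm g_h^D,\widehat{\bm\sigma}_h\bm n_h\rangle_{\Gamma_h^D}\bigr| \;\le\; \tfrac{1}{2}\Bigl(\|\bm\sigma_h\|_{D_h}^2+\|\tau^{1/2}(\bm V_h-\widehat{\bm V}_h)\|_{\mathcal E_h}^2\Bigr)
\end{equation*}
on sufficiently fine admissible triangulations. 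Absorbing this into the left-hand side of the energy identity forces $\bm\sigma_h=\bm 0$ in $D_h$ and $\bm V_h=\widehat{\bm V}_h$ on $\mathcal E_h$.

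\textbf{Conclusion.} With $\bm\sigma_h\equiv\bm 0$ and $\bm V_h=\widehat{\bm V}_h$, equation \eqref{hdg_scheme_V_1_a} integrated by parts reduces to $(\nabla\bm V_h,\bm\psi)_{D_h}=0$ for all $\bm\psi\in\mathbb Z_h$, which forces $\bm V_h$ to be piecewise constant; since $\widehat{\bm V}_h$ is single-valued across every face and coincides with $\bm V_h$, the constant is global on $D_h$. Finally, $\bm g_h^D\equiv\bm 0$ (because $\bm\sigma_h\equiv\bm 0$) yields $\widehat{\bm V}_h=\bm 0$ on $\Gamma_h^D$, which is nonempty by assumption, so $\bm V_h\equiv\bm 0$ and $\widehat{\bm V}_h\equiv\bm 0$. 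This establishes uniqueness, and hence existence, of the discrete solution. The delicate point that truly requires the quantitative admissibility conditions is the absorption of the extrapolation-based Dirichlet transfer term; the rest of the argument follows the template of the standard HDG energy analysis.
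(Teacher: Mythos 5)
Your proposal is correct and follows the same overall skeleton as the paper's proof: pass to the homogeneous problem, derive the energy identity $\norm{\bm\sigma_h}_{D_h}^2+\norm{\tau^{1/2}(\bm V_h-\widehat{\bm V}_h)}_{\mathcal E_h}^2=-\langle\bm g_h^D,\widehat{\bm\sigma}_h\bm n_h\rangle_{\Gamma_h^D}$ (your substitution of $\widehat{\bm V}_h$ by $\bm g_h^D$ via \eqref{hdg_scheme_V_1_d} is equivalent to the paper's piecewise choice of $\bm\mu$), absorb the transfer term, and finish with the rigid-mode argument. The genuine difference is in the absorption step. The paper rewrites $\bm\sigma_h\bm n_h=l^{-1}\bm g_h^D-\Lambda^{\bm\sigma_h}$, so that the pairing $\langle\widehat{\bm\sigma}_h\bm n_h,\bm g_h^D\rangle_{\Gamma_h^D}$ contributes the \emph{positive} term $\norm{l^{-1/2}\bm g_h^D}_{\Gamma_h^D}^2$ to the left-hand side and only the remainder $\Lambda^{\bm\sigma_h}$ must be absorbed via \eqref{est:Lambda2}; the admissibility condition \eqref{eq:Rbound} is calibrated exactly for the resulting factor $R^3(C^{ext}_eC^{inv}_e)^2$. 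You instead estimate the entire pairing by Cauchy--Schwarz, a path-integral bound on $\bm g_h^D$, and a discrete trace inequality on $\widehat{\bm\sigma}_h\bm n_h$. This works, but two caveats: (i) your intermediate bound has the wrong exponents --- the correct estimate is $\norm{\bm g_h^D}_{e}\lesssim (H_e^\perp)^{1/2}r_e^{1/2}C^{ext}_e\norm{\bm\sigma_h}_{K_e}$, not $H_e^\perp(C^{ext}_e)^2$ --- and (ii) the smallness you actually need is of the form $R\,C^{ext}_e$ small, which is \emph{not} implied by \eqref{eq:Rbound} alone when $C^{ext}_e$ is large; it is rescued only by the ``sufficiently fine'' hypothesis, since the local proximity condition \eqref{eq:localProximity} with $n\geq1$ forces $R\to0$. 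What the paper's decomposition buys, beyond cleaner constants, is the extra control $\norm{l^{-1/2}\bm g_h^D}_{\Gamma_h^D}$ on the left-hand side, which is reused verbatim in the error analysis (Lemma \ref{lem:key_ineq_V} and the energy norm \eqref{def:energy-norm-V}); your version discards that information. Your concluding rigid-mode argument matches the paper's.
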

\begin{proof}
We will use the Fredholm alternative. Let us start by assuming that $G_h(\Gamma)\,=\, 0$ and by taking test functions
\[
\bm \psi \,=\,\bm \sigma_h\,,\qquad \bm w \,=\, \bm V_h\,, \quad \text{ and } \quad \boldsymbol\mu = \begin{cases} \widehat{\boldsymbol V}_h \,\text{ in }\, \mathcal E_h\setminus \Gamma_h^D \\[.5ex] \widehat{\boldsymbol \sigma}_h\boldsymbol n_h \,\text{ in }\, \Gamma_h^D \end{cases}.
\]
Substituting the chosen value of $\boldsymbol \mu$ into \eqref{hdg_scheme_V_1_c}, \eqref{hdg_scheme_V_1_d}, and \eqref{hdg_scheme_V_1_e}, and adding the resulting expressions shows that
\begin{equation}
\label{sec4-aux1}
\langle  \widehat{\bm \sigma}_h\,\bm n_h, \widehat{\bm V}_h\rangle_{\mathcal{E}_h}\,=\,\langle\widehat{\bm \sigma}_h\,\bm n_h,\bm g_h^D\rangle_{\Gamma_h^D}.
\end{equation}
Then, integrating by parts \eqref{hdg_scheme_V_1_b} and adding the resulting expression to \eqref{hdg_scheme_V_1_a}, we get
\begin{alignat}{6}
\nonumber
    0 =\,& \norm{\bm\sigma_h}^{2}_{D_h}
    \,+\,\langle(\widehat{\bm\sigma}_h-\bm\sigma_h)\bm n_h,\bm V_h\rangle_{\mathcal{E}_h}
    \,+\,\langle\bm\sigma_h\,\bm n_h,\widehat{\bm V}_h\rangle_{\mathcal{E}_h} &&\\[1ex]
    \nonumber
   =\,& \norm{\bm\sigma_h}_{D_h}^{2}
    \,+\,\norm{\tau^{1/2}\,(\bm V_h-\widehat{\bm V}_h)}_{\mathcal{E}_h}^2
    \,+\,\langle \widehat{\bm\sigma}_h\,\bm n_h,\widehat{\bm V}_h\rangle_{\mathcal{E}_h} \qquad \qquad&& \text{\small (Using \eqref{trace_sigma_hat})}\\[1ex]
    \label{aux_1:proof_EyU_V}
    =\,& \norm{\bm\sigma_h}_{D_h}^{2}
    \,+\,\norm{\tau^{1/2}\,(\bm V_h-\widehat{\bm V}_h)}_{\mathcal{E}_h}^2
    \,+\,\langle \widehat{\bm\sigma}_h\,\bm n_h,\boldsymbol g_h^D\rangle_{\Gamma_h^D} \qquad \qquad&& \text{\small (By \eqref{sec4-aux1})}.
\end{alignat}
On the other hand, adding and subtracting $\bm \sigma_h$ in the definition of $\bm g_h^D$, we can write
\[
    \bm g_h^D(\bm x)
    \,=\, \int_0^{l(\bm x)}\left(E_h(\bm \sigma_h)(\bm x\,+\,s\,\bm n_h)\,-\,\bm \sigma_h(\bm x)\right)\cdot\bm n_h\,ds\,+\,\bm\sigma_h(\bm x)\,\bm n_h \,l(\bm x)\,,
\]
which leads to the expression
\[
\bm\sigma_h(\bm x)\,\bm n_h\,=\,l^{-1}(\bm x)\,\bm g_h^D(\bm x)\,-\,\Lambda^{\bm\sigma_h}(\bm x),
\]
where, to keep notation compact, we have defined the term
\begin{equation}
\label{def:Lambda}
\Lambda^{\bm\sigma_h}(\bm x) : = \frac{1}{l(\boldsymbol x)}\int_0^{l(\bm x)}\left(\boldsymbol{E}(\bm \sigma_h)(\bm x\,+\,s\,\bm n_h)\,-\,\bm \sigma_h(\bm x)\right)\cdot\bm n_h\,ds\,.
\end{equation}
Then,substituting the expression for $\bm\sigma_h(\bm x)\,\bm n_h$ obtained above into \eqref{trace_sigma_hat}, we have that
\begin{align*}
    \langle\widehat{\bm\sigma}_h\,\bm n_h,\bm g_h^D\rangle_{\Gamma_h^{D}}
    &\,=\,\norm{l^{-1/2}\,\bm g_h^D}_{\Gamma_h^{D}}^2+\langle l^{1/2}(\tau\,(\bm V_h-\widehat{\bm V}_h) - \Lambda^{\boldsymbol\sigma_h}),l^{-1/2}\,\bm g_h^D\rangle_{\Gamma_h^{D}}\,.
\end{align*}
Substituting this into \eqref{aux_1:proof_EyU_V} we find that
\[
         \norm{\bm\sigma_h}_{D_h}^{2}
         \,+\,\norm{\tau^{1/2}\,(\bm V_h-\widehat{\bm V}_h)}_{\mathcal{E}_h}^2
         \,+\,\norm{l^{-1/2}\,\bm g_h^D}_{\Gamma_h^{D}}^{2} 
         \,=\,\langle l^{1/2}(  \Lambda^{\boldsymbol\sigma_h}-\tau\,(\bm V_h-\widehat{\bm V}_h)),l^{-1/2}\,\bm g_h^D\rangle_{\Gamma_h^{D}}\,.
\]
Now, we use Young's inequality to bound the right-hand side of the expression above as
\begin{align*}
   \langle l^{1/2}(  \Lambda^{\boldsymbol\sigma_h}-\tau\,(\bm V_h-\widehat{\bm V}_h)),l^{-1/2}\,\bm g_h^D\rangle_{\Gamma_h^{D}} \leq\,& \tfrac{1}{2}\left(\|l^{1/2}\Lambda^{\boldsymbol\sigma_h}\|_{\Gamma_h^D}^2 + \|l^{1/2}\tau(\boldsymbol V_h - \widehat{\boldsymbol V}_h)\|_{\Gamma_h^D}^2 + \|l^{-1/2}\boldsymbol g_h^D\|_{\Gamma_h^D}^2\right)\\[1ex]
   \leq\,& \tfrac{1}{2}\left( H^\perp_e\|\Lambda^{\boldsymbol\sigma_h}\|_{\Gamma_h^D}^2 + H^\perp_e\|\tau(\boldsymbol V_h - \widehat{\boldsymbol V}_h)\|_{\Gamma_h^D}^2 + \|l^{-1/2}\boldsymbol g_h^D\|_{\Gamma_h^D}^2\right).
\end{align*}
From the last two expressions it follows that
\begin{alignat*}{6}
 \norm{\bm\sigma_h}_{D_h}^{2}
         \!+\!(1-\tfrac{1}{2}H^\perp_e)\left(\norm{\tau^{1/2}(\bm V_h-\widehat{\bm V}_h)}_{\mathcal{E}_h}^2
         +\norm{l^{-1/2}\,\bm g_h^D}_{\Gamma_h^{D}}^{2}\right)
         \leq\,& \tfrac{1}{2}H^\perp_e \|\Lambda^{\boldsymbol\sigma_h}\|_{\Gamma_h^D}^2 && \\[1ex] 
         \leq\,&  \tfrac{1}{6} H^\perp_e r_e^3(C^{ext}_e C^{inv}_{e})^2\norm{\bm \sigma_h}_{D_h}^2 \qquad \quad&& \text{\small (By \eqref{est:Lambda2})}\\[1ex]
         \leq\,& \tfrac{1}{2}H^\perp_e\norm{\bm \sigma_h}_{D_h}^2  && \text{\small (By \eqref{eq:Rbound})}.
\end{alignat*}
Hence, the local proximity condition \eqref{eq:localProximity} implies that if the mesh is sufficiently fine
\begin{subequations}
\begin{alignat}{6}
\label{eq:sigma0}
\bm \sigma_h =\,& \boldsymbol 0 \qquad&& \text{ in }D_h\,, \\
\label{eq:V=Vhat}
\bm V_h =\,& \widehat{\bm V}_h \qquad&& \text{ on } \mathcal{E}_h\,, \\
\label{eq:gD0}
\bm g_h^D =\,& \boldsymbol 0 \qquad&& \text{ on }\Gamma_h^{D}.
\end{alignat}
\end{subequations}
From here, we see that integrating by parts the second term in equation \eqref{hdg_scheme_V_1_a}, letting $\boldsymbol\psi=\nabla\boldsymbol V_h$, and using \eqref{eq:sigma0}, it follows that $\|\nabla\boldsymbol V_h\|_{D_h} = 0$. On the other hand, \eqref{eq:V=Vhat} and \eqref{eq:gD0} together with \eqref{hdg_scheme_V_1_d} imply
\[
\langle\bm V_h,\bm \mu \rangle_{\Gamma_h^{D}}\,=\,\langle\widehat{\bm V}_h,\bm \mu \rangle_{\Gamma_h^{D}}\,=\,\langle\bm g_h^D,\bm \mu\rangle_{\Gamma_h^{D}}\,=\,0
\]
for all $\mu \in \bm M_h$. Hence, $\bm V_h \,=\, 0 $ in $D_h$. Since the only solution to the homogeneous problem is the trivial one, the problem is uniquely solvable by the Fredholm alternative.
\qed\end{proof}

\section{\textit{A priori} error estimates.}\label{sec:a_priori_error_estimates}

For the error analysis we will make use of the HDG projection introduced in \cite{CoGoSa2010} and summarized in the Appendix \ref{sec:HDGprojection} for convenience. For a discrete space $X_h$ we will denote its HDG projector by $\Pi_X$ and the $L^2$ projector by $P_X$ and use them to define the projections of the error for all our unknowns:
{\small \begin{alignat*}{12}
\bm\varepsilon_{\bm p}\,:=\,& \bm\Pi_{\bm Z}\bm{p}\,-\,\bm{p}_h\,,  
\qquad&& \varepsilon_y\,:=\,&\Pi_W y\,-\,y_h\,, 
\qquad&& \varepsilon_{\widehat{y}} \,:=\,& P_M y\,-\,\widehat{y}_h\,,
\qquad&& \bm\varepsilon_{\widehat{\bm p}} \,:=\,& P_M \bm{p}\,-\, \widehat{\bm{p}}_h\,, \qquad&& \text{\footnotesize (State)} \\[1ex]
\bm\varepsilon_{\bm r}\,:=\,& \bm\Pi_{\bm Z}\bm{r}\,-\,\bm{r}_h\,,  
\qquad&& \varepsilon_z\,:=\,&\Pi_W z\,-\,z_h\,, 
\qquad&& \varepsilon_{\widehat{z}} \,:=\,& P_M z\,-\,\widehat{z}_h\,,
\qquad&& \bm\varepsilon_{\widehat{\bm r}} \,:=\,& P_M \bm{r}\,-\, \widehat{\bm{r}}_h\,, \qquad&& \text{\footnotesize (Adjoint)} \\[1ex]
\bm\varepsilon_{\bm \sigma}\,:=\,& \bm\Pi_{\mathbb Z}\bm{\sigma}\,-\,\bm{\sigma}_h\,,  
\qquad&& \varepsilon_{\bm V}\,:=\,&\Pi_{\bm W} \bm V\,-\,\bm V_h\,, 
\qquad&& \varepsilon_{\widehat{\bm V}} \,:=\,& P_{\bm M} \bm V\,-\,\widehat{\bm V}_h\,,
\qquad&& \bm\varepsilon_{\widehat{\bm \sigma}} \,:=\,& P_{\bm M} \bm{\sigma}\,-\, \widehat{\bm{\sigma}}_h\,, \qquad&& \text{\footnotesize (Deformation)} 
\end{alignat*} }
We also define the interpolation errors as
\begin{alignat*}{10}
     I_{\bm p}\,:=\,& \bm p\,-\,\bm\Pi_{\bm Z}\bm p\,,
     \qquad \quad &&  I_{\bm r}\,:=\,& \bm r\,-\,\bm\Pi_{\bm Z}\bm r\,,
     \qquad \quad &&  \boldsymbol{I}_{\bm \sigma}\,:=\,& \bm\sigma\,-\,\bm\Pi_{\mathbb{Z}}\bm\sigma\,, \\
      I_{y}\,:=\,& y\,-\,\Pi_W{y}\,, \qquad \quad && I_{z}\,:=\,& z\,-\,\Pi_{W}z\,, \qquad \quad && I_{\bm V}\,:=\,&\bm V\,-\,\bm\Pi_{\bm W}\bm V.
\end{alignat*}

\subsection{Error estimates for the state and adjoint equations}\label{sec:ErrorEstimates}
As mentioned before, the analysis for unffitted HDG schemes for independent equations of the type satisfied by the state and adjoint variables were analyzed in \cite{CoQiuSo2014}. The main difference is that, for the present case, the problems are coupled through the adjoint boundary condition \eqref{HDG_mixed_formualtion_adjoint_b}. This is reflected in the presence of both variables in some of the error estimates below. Nevertheless, the results from \cite{CoQiuSo2014} carry over in almost straightforward manner and we shall not repeat the arguments here.  Instead, we summarize the error estimates in the following theorem.

\begin{theorem}[\cite{CoQiuSo2014}{\,Theorem 2.1}]
    \label{theorem:estimates_Omega_p.and.r}
    Let $(y,\boldsymbol p)$ be a solution pair to the state problem \eqref{HDG_mixed_formualtion_state} and $(z,\boldsymbol r)$ be a solution pair to the adjoint problem \eqref{HDG_mixed_formualtion_adjoint}, and $(y_h,\boldsymbol p_h)$ and $(z_h,\boldsymbol r_h)$ be solutions to the corresponding HDG schemes \eqref{HDG_state_eq}  and \eqref{HDG_state_eq}. We have that
    \begin{align*}
        \norm{\bm p-\bm p_h}_{\Omega}
        &\,\lesssim\,
    \norm{I_{\bm p}}_{D_h}
        \,+\,\norm{I_{\bm p}\cdot\bm n}_{\Gamma_h,h^{\perp}} 
        \,+\,h^{k+1}\,|\bm p|_{\bm H^{k+1}(\Omega)}\,,\\
        \norm{\bm r\,-\,\bm r_h}_{\Omega}
        &\,\lesssim \,\norm{I_{\bm r}}_{D_h}
        \,+\,\norm{I_{\bm r}\cdot\bm n}_{\Gamma_h,h^{\perp}}
        \,+\,h^{k+1}\,|\bm r|_{\bm H^{k+1}(\Omega)}
        \,+\,\norm{\varepsilon_h^y}_{D_h}
        \,+\,\norm{I_y}_{D_h}.
    \end{align*}
Moreover, recalling the definition of the proximity parameter $R := \max_{e\in\mathcal{E}_h^{\partial}}r_e$ and 
\[
H_1 := \left(h+\tau^{1/2}Rh+R^2h^{1/2}\right), \qquad
H_2:=\left(1+\tau^{1/2}Rh^{1/2}\right), \qquad
H_3 := \left(h+R^{1/2}h+R^{3/2}\,h^{1/2}\right)\,,
\]
then the following estimate holds
\begin{align*}
        \norm{y\,-\,y_h}_{\Omega}+\norm{z\,-\,z_h}_{\Omega}
        &\,\lesssim\, 
        H_1\,(\norm{I_{\bm p}}_{D_h}\,+\,\norm{I_{\bm r}}_{D_h})
        \,+\,R^{1/2}h\,(\norm{I_{\bm p}\cdot\bm n}_{\Gamma_h,h^{\perp}}\,+\,\norm{I_{\bm r}\cdot\bm n}_{\Gamma_h,h^{\perp}})\\
        &\quad\
        +H_2\,(\norm{I_{y}}_{D_h}+\norm{I_{z}}_{D_h})
        +\,H_3\,h^{k+1}\,(|\bm p|_{\bm H^{k+1}(\Omega)}+|\bm r|_{\bm H^{k+1}(\Omega)})\,.
    \end{align*}    
\end{theorem}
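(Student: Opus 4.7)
The plan is to apply the argument of \cite[Theorem 2.1]{CoQiuSo2014} separately to the state and adjoint systems, exploiting the fact that the coupling is one-way: the state system \eqref{HDG_state_eq} is completely independent of the adjoint, so the existing argument yields the bound on $\|\bm p - \bm p_h\|_\Omega$ verbatim. The adjoint system \eqref{HDG_adjoint_eq} has the same structure as the state, the only difference being that its source is $y_h - \widetilde{y}$ rather than $y - \widetilde{y}$. Writing the error system for the adjoint therefore produces an extra volumetric contribution $(y - y_h, w_2)_{D_h}$, which the triangle inequality $\|y - y_h\|_{D_h} \leq \|I_y\|_{D_h} + \|\varepsilon_y\|_{D_h}$ converts into the two additional terms appearing on the right-hand side of the $\bm r$ estimate.

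I would first derive the error systems by subtracting the discrete schemes from the equations satisfied by the HDG projections of the exact solutions. Testing each error system against itself yields an energy identity controlling $\|\bm\varepsilon_{\bm p}\|_{D_h}$, $\|\tau^{1/2}(\varepsilon_y - \varepsilon_{\widehat{y}})\|_{\mathcal E_h}$, and their adjoint analogs, in terms of the interpolation errors and of the boundary mismatch produced by the transfer operator \eqref{HDG_extensions_eq}. The boundary contribution is the technical core of the argument: using the bounds \eqref{constants_C} on the extrapolation and inverse-estimate constants together with the admissibility hypotheses \eqref{eq:Hbound} and \eqref{eq:Rbound} allows one to absorb the extrapolation of $\bm p_h$ into the left-hand side, leaving only terms controlled by $I_{\bm p}\cdot \bm n$ evaluated on $\Gamma_h$ and weighted by $h^\perp$. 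Combined with the triangle inequality $\|\bm p - \bm p_h\|_\Omega \leq \|I_{\bm p}\|_{D_h} + \|\bm\varepsilon_{\bm p}\|_{D_h} + \|\bm p - \bm p_h\|_{D_h^c}$ and a standard $O(h^{k+1})$ estimate on the extension patches, this yields the stated flux bounds.

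For the $L^2$ estimates on $y$ and $z$, I would invoke an Aubin--Nitsche duality argument. The dual problem, posed on $\Omega$ with right-hand side equal to the error of interest, together with elliptic regularity and the superconvergence properties of the HDG projection, produces bounds on $\|y - y_h\|_\Omega$ and $\|z - z_h\|_\Omega$ in terms of the already-estimated flux errors, the potential projection errors, and an $O(h^{k+1})$ superconvergence contribution. The three length scales arise by collecting contributions according to the factor they multiply: $H_1$ multiplies the flux interpolation errors, with $\tau^{1/2}Rh$ coming from the stabilization penalty on boundary edges and $R^2 h^{1/2}$ from the extrapolation consistency defect; $R^{1/2}h$ multiplies the weighted normal trace of $I_{\bm p}, I_{\bm r}$; $H_2$ multiplies the potential interpolation errors, with the additional $\tau^{1/2}Rh^{1/2}$ term again reflecting the boundary penalty; and $H_3$ multiplies the dual-problem regularity contribution. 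For the coupled $z$-estimate, the source-driven term $\|y - y_h\|_\Omega$ admits identical bounds and is absorbed into the same combination.

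The main obstacle is bookkeeping rather than substantively new analysis: one must carefully track how $h$, $R$, $\tau$, and $k$ enter each term so that the final estimate factors exactly into $H_1$, $H_2$, and $H_3$. Moreover, one must verify that the admissibility thresholds \eqref{eq:Hbound}--\eqref{eq:Rbound} remain sharp enough to close the coupled adjoint estimate after the new source term is introduced; this is automatic, since those thresholds constrain only the geometric constants and $\tau$, not the problem data. Because the dependence between the two systems is triangular (state first, then adjoint), no circular argument arises and the bounds for the adjoint can be derived by simply treating $\|I_y\|_{D_h} + \|\varepsilon_y\|_{D_h}$ as an extra forcing term throughout the proof of \cite[Theorem 2.1]{CoQiuSo2014}.
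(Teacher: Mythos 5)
Your proposal matches the paper's treatment: the paper does not reprove this result but defers entirely to \cite[Theorem 2.1]{CoQiuSo2014}, remarking only that the one-way coupling through the adjoint source $y_h-\widetilde y$ introduces the extra terms $\norm{\varepsilon_y}_{D_h}+\norm{I_y}_{D_h}$ in the bound for $\bm r-\bm r_h$, which is precisely the modification you identify via the splitting $y-y_h = I_y+\varepsilon_y$ in the adjoint error equation. Your reconstruction of the internal structure of the cited proof (energy identity, absorption of the extrapolated boundary terms under \eqref{eq:Hbound}--\eqref{eq:Rbound}, and the duality argument producing $H_1$, $H_2$, $H_3$) is consistent with that reference, so the approach is essentially the same.
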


\begin{corollary}\label{corollary:estimates_p_lesssim}
If the stabilization parameter $\tau = \mathcal O(1)$, then,
\[
\norm{\bm p\,-\,\bm p_h}_{\Omega}\,\lesssim\, h^{k+1}\,, \qquad 
\norm{y\,-\,y_h}_{\Omega}\,\lesssim\, h^{k+1}\,, \qquad
\norm{\bm r\,-\,\bm r_h}_{\Omega}\,\lesssim \, h^{k+1}\,, \qquad \|z\,-\,z_h\|_{\Omega}\,\lesssim\,h^{k+1}\,.
\]
\end{corollary}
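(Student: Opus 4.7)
The strategy is to invoke Theorem \ref{theorem:estimates_Omega_p.and.r} directly and check that every quantity on its right-hand side is of order $h^{k+1}$ once $\tau=\mathcal{O}(1)$ is imposed. The corollary is therefore essentially a bookkeeping argument built on the preceding theorem, together with standard approximation properties of the HDG and $L^2$ projectors.

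The first step is to argue that the geometric constants behave favorably. The admissibility condition \eqref{eq:Rbound}, together with the bounds \eqref{constants_C}, shows that the proximity parameter $R$ is bounded by a constant that depends only on the polynomial degree $k$ and on the shape-regularity parameter, and in particular is independent of $h$. With this bound in hand, and $\tau$ uniformly bounded, the mesh-dependent quantities $H_1$, $H_2$ and $H_3$ appearing in Theorem \ref{theorem:estimates_Omega_p.and.r} are all $\mathcal{O}(1)$ (several of their contributions are even better, like $\mathcal{O}(h)$ or $\mathcal{O}(h^{1/2})$, but $\mathcal{O}(1)$ is all that is needed). Next, I would apply the standard approximation properties of the HDG projector $\bm\Pi_{\bm Z}$ and of the $L^2$ projector $\Pi_W$ (recalled in Appendix \ref{sec:HDGprojection}) to obtain $\|I_{\bm p}\|_{D_h},\,\|I_{\bm r}\|_{D_h},\,\|I_y\|_{D_h},\,\|I_z\|_{D_h}\lesssim h^{k+1}$, with hidden constants depending on the corresponding Sobolev seminorms of the exact solutions.

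The remaining ingredient is the weighted boundary term $\|I_{\bm p}\cdot\bm n\|_{\Gamma_h,h^{\perp}}$. Here I would combine a scaled trace inequality on the background mesh with the interpolation estimate for $I_{\bm p}$: this produces a factor of $h^{1/2}$ from the trace times $h^{k+1/2}$ from the bulk approximation, so that, once the weight $h^{\perp}\sim h$ is absorbed, the whole expression is $\mathcal{O}(h^{k+1})$; the analogous bound holds for $I_{\bm r}$. Substituting these pieces into Theorem \ref{theorem:estimates_Omega_p.and.r} immediately yields the desired bounds for $\|\bm p-\bm p_h\|_{\Omega}$ and $\|y-y_h\|_{\Omega}$. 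For the adjoint pair, the estimate for $\|\bm r-\bm r_h\|_{\Omega}$ in Theorem \ref{theorem:estimates_Omega_p.and.r} additionally involves $\|\varepsilon_h^y\|_{D_h}$; I would close this via the triangle inequality $\|\varepsilon_h^y\|_{D_h}\leq\|y-y_h\|_{D_h}+\|I_y\|_{D_h}$, so the already-established $\mathcal{O}(h^{k+1})$ bound for the state variable feeds directly into the adjoint estimate. There is no genuine obstacle: the only mildly delicate point is tracking the weight $h^{\perp}$ in the boundary norm and ordering the arguments so that the state estimates are used to close the coupled estimate for the adjoint.
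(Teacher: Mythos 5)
Your proposal is correct and coincides with the paper's (implicit) argument: the corollary is stated without proof precisely because it follows from Theorem \ref{theorem:estimates_Omega_p.and.r} by bounding the interpolation terms with \eqref{theorem:proyHDG}--\eqref{eq:I_r-Gamma-estimate}, noting $H_1,H_2,H_3=\mathcal O(1)$ when $\tau=\mathcal O(1)$ and $R$ is bounded, and closing the adjoint bound with $\|\varepsilon_y\|_{D_h}\leq\|y-y_h\|_{D_h}+\|I_y\|_{D_h}$. The only redundancy is your re-derivation of the weighted boundary term via a scaled trace inequality, since \eqref{eq:I_r-Gamma-estimate} already supplies that estimate directly.
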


\subsection{\texorpdfstring{Error estimates for $\bm\sigma\,-\,\bm\sigma_h$}{Error estimates for Esigma}}
We start by recalling that $\bm n_h$ to denotes the unitary normal vector for $\Gamma_h$, while $\bm n$ denotes the unitary normal vector for $\Gamma$. To begin with the analysis of the error for the velocity field equation, we note that the projections of the errors satisfy the following equations
    \begin{subequations}
    \label{lemma:energy_arg_V}
    \begin{align}
        \label{lemma:energy_arg_V_a}
        (\bm\varepsilon_{\bm\sigma},\bm\psi)_{D_h}
        \,-\,(\bm\varepsilon_{\bm V},\bdiv(\bm\psi))_{D_h}
        \,+\,\langle\bm\varepsilon_{\widehat{\bm V}},\bm\psi\bm n_h\rangle_{\mathcal{E}_h} 
        &\,=\, 
        -\,(\boldsymbol{I}_{\bm\sigma},\bm\psi)_{D_h}\,,\\
        \label{lemma:energy_arg_V_b}
        (\bm \varepsilon_{\bm\sigma},\nabla\bm w)_{D_h}
        \,-\,\langle\bm\varepsilon_{\widehat{\bm\sigma}}\bm n_h,\bm w\rangle_{\mathcal{E}_h}
        &\,=\,0\,,\\
        \label{lemma:energy_arg_V_c}
        \langle\bm\varepsilon_{\widehat{\bm\sigma}}\bm n_h,\bm \mu\rangle_{\mathcal{E}_h\backslash\Gamma_h}
        &\,=\,0\,,\\
        \label{lemma:energy_arg_V_d}
        \langle\bm\varepsilon_{\widehat{\bm\sigma}}\bm n_h,\bm\mu\rangle_{\Gamma_h^{N}}
        &\,=\,
        \langle(G-G_h)\bm n\circ\bm\phi,\bm\mu\rangle_{\Gamma_h^{N}}\,,\\
        \label{lemma:energy_arg_V_e}
        \langle\bm\varepsilon_{\widehat{\bm V}},\bm\mu\rangle_{\Gamma_h^{D}}
        &\,=\,\langle\bm g^D-\bm g_h^D,\bm\mu\rangle_{\Gamma_h^{D}}\,,\\
        \label{lemma:energy_arg_V_f}
        \bm\varepsilon_{\widehat{\bm\sigma}}\,\bm n_h
        &\,=\, \bm\varepsilon_{\bm\sigma}\bm n_h
        +\tau\,(\bm\varepsilon_{\bm V}-\bm\varepsilon_{\widehat{\bm V}})\,,
    \end{align}
    \end{subequations}
for all $(\bm \psi,\bm w, \bm \mu)\in \mathbb{Z}_h\times \bm W_h\times \bm  M_h$.

The following lemma provides a useful representation of the normal component of the error on the deformation field flux $\boldsymbol\epsilon_{\boldsymbol\sigma}$ .

\begin{lemma}\label{cor:identity_projerr_Esigma} 
Consider the definition in \eqref{def:Lambda}. The following equation holds
\begin{equation}
	\label{eq:identity_projerr_Esigma}
	\bm\varepsilon_{\bm\sigma}\,\bm n_h 
	\,=\,
	l^{-1}\,(\bm g^D - \bm g_h^D)
	\,-\,\Lambda^{\boldsymbol{I}_{\bm\sigma}}
	\,-\,\Lambda^{\bm\varepsilon_{\bm \sigma}}
	\,-\,\boldsymbol{I}_{\bm\sigma}\,\bm n_h\,.
\end{equation}
\end{lemma}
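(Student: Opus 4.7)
The claim is a purely algebraic identity that follows once we unpack the definitions of the projected errors, the continuous and discrete Dirichlet data $\bm g^D,\bm g_h^D$, and the averaged normal discrepancy $\Lambda$ introduced in \eqref{def:Lambda}. The plan is to compare the integrands defining $\bm g^D$ and $\bm g_h^D$ at each ``base point'' $\bm x\in \Gamma_h^D$ and then collapse the pieces by linearity.

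First I would peel off the $-\,\bm I_{\bm\sigma}\bm n_h$ contribution on the right-hand side of \eqref{eq:identity_projerr_Esigma} by the trivial decomposition
\[
\bm\varepsilon_{\bm\sigma}\,\bm n_h \,=\, (\bm\Pi_{\mathbb Z}\bm\sigma \,-\, \bm\sigma_h)\,\bm n_h \,=\, (\bm\sigma \,-\, \bm\sigma_h)\,\bm n_h \,-\, \bm I_{\bm\sigma}\,\bm n_h,
\]
reducing the task to recognizing $(\bm\sigma - \bm\sigma_h)(\bm x)\cdot\bm n_h$ as $l^{-1}(\bm g^D - \bm g_h^D) - \Lambda^{\bm I_{\bm\sigma}} - \Lambda^{\bm\varepsilon_{\bm\sigma}}$.

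Next, using the standing simplification $\bm t = \bm n_h$ along $\Gamma_h^D$, I would add and subtract the boundary values $\bm\sigma(\bm x)\bm n_h$ and $\bm\sigma_h(\bm x)\bm n_h$ inside the integrals defining $\bm g^D$ and $\bm g_h^D$. This is the same manipulation that was used in the well-posedness proof to isolate the term $\bm\sigma_h(\bm x)\bm n_h\, l(\bm x)$, and now produces
\begin{align*}
\bm g^D(\bm x)\,-\,\bm g_h^D(\bm x) \,=\,& \int_{0}^{l(\bm x)}\bigl(\bm\sigma(\bm x+s\bm n_h)\,-\,\bm\sigma(\bm x)\bigr)\cdot\bm n_h\,ds \\
& -\,\int_{0}^{l(\bm x)}\bigl(E_h(\bm\sigma_h)(\bm x+s\bm n_h)\,-\,\bm\sigma_h(\bm x)\bigr)\cdot\bm n_h\,ds \,+\, l(\bm x)\,(\bm\sigma(\bm x)\,-\,\bm\sigma_h(\bm x))\cdot\bm n_h.
\end{align*}
After dividing by $l(\bm x)$, the first two integrals match the definition of $\Lambda$ applied to $\bm\sigma$ (no extension needed, since $\bm\sigma$ is already defined on $\Omega$) and to $\bm\sigma_h$, respectively, yielding
\[
l^{-1}(\bm g^D - \bm g_h^D) \,=\, (\bm\sigma \,-\, \bm\sigma_h)(\bm x)\cdot\bm n_h \,+\, \Lambda^{\bm\sigma} \,-\, \Lambda^{\bm\sigma_h}.
\]

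To close, I would use the decomposition $\bm\sigma = \bm\Pi_{\mathbb Z}\bm\sigma + \bm I_{\bm\sigma}$ together with $\bm\sigma_h = \bm\Pi_{\mathbb Z}\bm\sigma - \bm\varepsilon_{\bm\sigma}$ to write, on each extension patch, $\bm\sigma - E_h(\bm\sigma_h) = E_h(\bm I_{\bm\sigma}) + E_h(\bm\varepsilon_{\bm\sigma})$, and analogously at the base point $\bm x$. By linearity of $\Lambda$ this collapses into
\[
\Lambda^{\bm\sigma}\,-\,\Lambda^{\bm\sigma_h} \,=\, \Lambda^{\bm I_{\bm\sigma}}\,+\,\Lambda^{\bm\varepsilon_{\bm\sigma}},
\]
and substituting back into the previous display, together with the initial splitting of $\bm\varepsilon_{\bm\sigma}\bm n_h$, produces precisely \eqref{eq:identity_projerr_Esigma}. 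I do not foresee a substantive obstacle: the only delicate point is the interpretation of $\Lambda$ applied to quantities that are intrinsically defined only on $D_h$ (such as $\bm I_{\bm\sigma}$ and $\bm\varepsilon_{\bm\sigma}$), where the integrand must be understood through the polynomial extension $E_h$ of the discrete parts. Once this convention is fixed, the identity is elementary bookkeeping.
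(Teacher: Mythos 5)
Your proposal is correct and follows essentially the same route as the paper: both arguments reduce the identity to elementary bookkeeping on the integral $\bm g^D - \bm g_h^D$, inserting $\bm\Pi_{\mathbb Z}\bm\sigma$ (equivalently, splitting $\bm\sigma-\bm\sigma_h$ into $\boldsymbol I_{\bm\sigma}+\bm\varepsilon_{\bm\sigma}$) and adding and subtracting the base-point values to recognize the $\Lambda$ terms; the only difference is that you first form $\Lambda^{\bm\sigma}-\Lambda^{\bm\sigma_h}$ and then invoke linearity, whereas the paper substitutes the decomposition directly into the integrand. Your closing remark about interpreting $\Lambda$ on $D_h$-defined quantities via the extrapolation $E_h$ is exactly the convention the paper adopts tacitly in \eqref{def:Lambda}.
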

\begin{proof}
    Let us note that
    \begin{align*}
        \boldsymbol{g}^D - \boldsymbol{g}_h^D
        \,=\, \int_0^{l(\boldsymbol{x})} \boldsymbol{\sigma}(\boldsymbol{x} + s \,\boldsymbol{n}_h)\,ds
        \,-\,\int_0^{l(\boldsymbol{x})} \boldsymbol{E}_h(\boldsymbol{\sigma})(\boldsymbol{x} + s \,\boldsymbol{n}_h)\,ds
        \,=\,\int_0^{l(\boldsymbol{x})} (\boldsymbol{\sigma}-\boldsymbol{E}_h(\boldsymbol{\sigma}))(\boldsymbol{x} + s \,\boldsymbol{n}_h)\,ds\,.
    \end{align*}
    Adding and subtracting $\boldsymbol{\Pi}_{\mathbb{Z}}\boldsymbol\sigma$ in the integrand above we get
    \begin{align*}
        \boldsymbol{g}^D - \boldsymbol{g}_h^D
        &\,=\, \int_0^{l(\boldsymbol{x})} (I_{\boldsymbol{\sigma}} \, +\,\boldsymbol{\varepsilon}_{\boldsymbol{\sigma}})(\boldsymbol{x} + s \,\boldsymbol{n}_h)\,ds\\
        &\,=\, \int_0^{l(\boldsymbol{x})} (I_{\boldsymbol{\sigma}}(\boldsymbol{x} + s\,\boldsymbol{n}_h)- I_{\boldsymbol{\sigma}}(\boldsymbol{x}))\,\boldsymbol{n}_h\,ds
        \,+\,l(\boldsymbol{x})\,I_{\boldsymbol{\sigma}}(\boldsymbol{x})\,\boldsymbol{n}_h
        \,+\,\int_0^{l(\boldsymbol{x})} (\boldsymbol{\varepsilon}_{\boldsymbol{\sigma}}(\boldsymbol{x} + s\,\boldsymbol{n}_h)- \boldsymbol{\varepsilon}_{\boldsymbol{\sigma}}(\boldsymbol{x}))\,\boldsymbol{n}_h\,ds\\
        &\quad\,+\,l(\boldsymbol{x})\,\boldsymbol{\varepsilon}_{\boldsymbol{\sigma}}(\boldsymbol{x})\,\boldsymbol{n}_h\\
        &\,=\,l(\boldsymbol{x})(\Lambda^{I_{\boldsymbol{\sigma}}}
        \,+\,I_{\boldsymbol{\sigma}}\,\boldsymbol{n}_h
        \,+\,\Lambda^{\boldsymbol{\varepsilon}_{\boldsymbol{\sigma}}}
        \,+\,\boldsymbol{\varepsilon}_{\boldsymbol{\sigma}}\,\boldsymbol{n}_h)(\boldsymbol{x})\,,
    \end{align*}
    after a rearrangement of terms the result follows.
\qed\end{proof}

Now, we present a useful result for the analysis

\begin{lemma}\label{lemma:identity_V_1}
Let $\boldsymbol\varepsilon_{\boldsymbol\sigma},\boldsymbol\varepsilon_{\widehat{\boldsymbol \sigma}},  \boldsymbol\varepsilon_{\bm V}, \boldsymbol\varepsilon_{\widehat{\bm V}}$ be solutions to the system \eqref{lemma:energy_arg_V}; then the following holds:

\begin{equation}
    \label{eq:identity_V_1}
    \begin{array}{c}
        \displaystyle\norm{\bm \varepsilon_{\bm\sigma}}_{D_h}^{2}
        +\norm{\tau^{1/2}\,(\bm\varepsilon_{\bm V}-\bm\varepsilon_{\widehat{\bm V}})}_{\mathcal{E}_h}^2
        +\langle\bm g^D-\bm g_h^D,\bm\varepsilon_{\widehat{\bm\sigma}}\,\bm n_h\rangle_{\Gamma_h^D}
        +\langle(G-G_h)\bm n\circ\bm\phi,\bm\varepsilon_{\widehat{\bm V}}\rangle_{\Gamma_h^{N}}
        =-\,(\boldsymbol{I}_{\bm \sigma},\bm\varepsilon_{\bm\sigma})_{D_h}\,.
    \end{array}
\end{equation}
\end{lemma}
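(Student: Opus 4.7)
The plan is to mirror, at the level of the projection--of--error system \eqref{lemma:energy_arg_V}, the energy identity used in the well-posedness proof of the deformation field scheme. Concretely, I would test \eqref{lemma:energy_arg_V_a} with $\bm\psi = \bm\varepsilon_{\bm\sigma}$ and \eqref{lemma:energy_arg_V_b} with $\bm w = \bm\varepsilon_{\bm V}$, integrate the second identity by parts element--wise, and add the two resulting expressions. The two divergence volume contributions cancel and one is left with
\[
\|\bm\varepsilon_{\bm\sigma}\|_{D_h}^{2} \,+\, \langle(\bm\varepsilon_{\widehat{\bm\sigma}}-\bm\varepsilon_{\bm\sigma})\bm n_h,\bm\varepsilon_{\bm V}\rangle_{\mathcal E_h} \,+\, \langle\bm\varepsilon_{\widehat{\bm V}},\bm\varepsilon_{\bm\sigma}\bm n_h\rangle_{\mathcal E_h} \,=\, -\,(\boldsymbol{I}_{\bm\sigma},\bm\varepsilon_{\bm\sigma})_{D_h}\,.
\]

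The next step is to invoke the flux definition \eqref{lemma:energy_arg_V_f} twice. First, it replaces the normal jump $(\bm\varepsilon_{\widehat{\bm\sigma}}-\bm\varepsilon_{\bm\sigma})\bm n_h$ in the first bracket with $\tau(\bm\varepsilon_{\bm V}-\bm\varepsilon_{\widehat{\bm V}})$. Second, it allows me to rewrite $\bm\varepsilon_{\bm\sigma}\bm n_h = \bm\varepsilon_{\widehat{\bm\sigma}}\bm n_h-\tau(\bm\varepsilon_{\bm V}-\bm\varepsilon_{\widehat{\bm V}})$ inside the second bracket. The two resulting $\tau$--contributions then combine into the stabilization square $\|\tau^{1/2}(\bm\varepsilon_{\bm V}-\bm\varepsilon_{\widehat{\bm V}})\|_{\mathcal E_h}^{2}$, and what remains of the skeleton pairing is the single term $\langle\bm\varepsilon_{\widehat{\bm V}},\bm\varepsilon_{\widehat{\bm\sigma}}\bm n_h\rangle_{\mathcal E_h}$.

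To match the target identity \eqref{eq:identity_V_1} it only remains to show that this residual pairing equals the sum of the two boundary terms. For this I would split $\mathcal E_h=(\mathcal E_h\setminus\Gamma_h)\cup\Gamma_h^D\cup\Gamma_h^N$ and use the boundary error equations \eqref{lemma:energy_arg_V_c}--\eqref{lemma:energy_arg_V_e} with judicious test functions in $\bm M_h$. On the interior skeleton, testing \eqref{lemma:energy_arg_V_c} with $\bm\mu = \bm\varepsilon_{\widehat{\bm V}}$ annihilates the contribution. On $\Gamma_h^N$, the same choice in \eqref{lemma:energy_arg_V_d} produces exactly $\langle(G-G_h)\bm n\circ\bm\phi,\bm\varepsilon_{\widehat{\bm V}}\rangle_{\Gamma_h^N}$. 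On $\Gamma_h^D$ the roles are swapped: one tests \eqref{lemma:energy_arg_V_e} against $\bm\mu = \bm\varepsilon_{\widehat{\bm\sigma}}\bm n_h$ (which lies in $\bm M_h$ by the flux formula \eqref{lemma:energy_arg_V_f} and the polynomial character of $\bm\varepsilon_{\bm\sigma}$, $\bm\varepsilon_{\bm V}$, $\bm\varepsilon_{\widehat{\bm V}}$) to recover $\langle\bm g^D-\bm g_h^D,\bm\varepsilon_{\widehat{\bm\sigma}}\bm n_h\rangle_{\Gamma_h^D}$.

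No step is a genuine obstacle; the only delicate points are the sign bookkeeping when combining the two uses of \eqref{lemma:energy_arg_V_f}, and the characteristic HDG \emph{dual swap} on $\Gamma_h^D$, where the Dirichlet boundary equation must be tested against the flux rather than against the trace. Both manoeuvres already appear in the well-posedness argument for \eqref{hdg_scheme_V_1} and transfer verbatim to the error setting, since the left-hand sides of \eqref{lemma:energy_arg_V} differ from those of \eqref{hdg_scheme_V_1} only in the non-homogeneous data that become the boundary terms in the target identity.
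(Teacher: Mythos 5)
Your proposal is correct and follows essentially the same route as the paper's proof: the same energy test functions $\bm\psi=\bm\varepsilon_{\bm\sigma}$, $\bm w=\bm\varepsilon_{\bm V}$, the same double use of the flux relation \eqref{lemma:energy_arg_V_f} to produce the stabilization square and the residual pairing $\langle\bm\varepsilon_{\widehat{\bm\sigma}}\,\bm n_h,\bm\varepsilon_{\widehat{\bm V}}\rangle_{\mathcal{E}_h}$, and the same splitting of the skeleton with $\bm\mu=\bm\varepsilon_{\widehat{\bm V}}$ on $\mathcal E_h\setminus\Gamma_h$ and $\Gamma_h^N$ versus $\bm\mu=\bm\varepsilon_{\widehat{\bm\sigma}}\bm n_h$ on $\Gamma_h^D$. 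Your explicit remark that $\bm\varepsilon_{\widehat{\bm\sigma}}\bm n_h$ is an admissible test function in $\bm M_h$ is a detail the paper leaves implicit, but otherwise the two arguments coincide.
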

\begin{proof}
Setting $\bm\psi\,=\,\bm\varepsilon_{\bm\sigma}$ and $\bm w\,=\, \bm\varepsilon_{\bm V}$, integrating by parts and adding both equations we obtain
\begin{align*}
    \norm{\bm\varepsilon_{\bm\sigma}}_{D_h}^{2}
    \,+\,\langle\bm\varepsilon_{\widehat{\bm\sigma}}\,\bm n_h-\bm\varepsilon_{\bm\sigma}\,\bm n_h,\bm\varepsilon_{\bm V}\rangle_{\mathcal{E}_h}
    \,+\,\langle\bm\varepsilon_{\widehat{\bm V}},\bm\varepsilon_{\bm\sigma}\,\bm n_h\rangle_{\mathcal{E}_h}
    \,=\,
    -\,(\boldsymbol{I}_{\bm\sigma},\bm\varepsilon_{\bm\sigma})_{D_h}\,,
\end{align*}
then, by \eqref{lemma:energy_arg_V_f}
\begin{align*}
   \norm{\bm\varepsilon_{\bm\sigma}}_{D_h}^{2}
    \,+\,\norm{\tau^{1/2}\,(\bm\varepsilon_{\bm V}-\bm\varepsilon_{\widehat{\bm V}})}_{\mathcal{E}_h}^2
    \,+\,\langle\bm\varepsilon_{\widehat{\bm\sigma}}\,\bm n_h,\bm\varepsilon_{\widehat{\bm V}}\rangle_{\mathcal{E}_h}
    \,=\,
    -\,(\boldsymbol{I}_{\bm\sigma},\bm\varepsilon_{\bm\sigma})_{D_h}\,.
\end{align*}
Furthermore, note that by \eqref{lemma:energy_arg_V_c}, \eqref{lemma:energy_arg_V_d}, and \eqref{lemma:energy_arg_V_e} we obtain 
\begin{align*}
    \langle\bm\varepsilon_{\widehat{\bm\sigma}}\,\bm n_h,\bm\varepsilon_{\widehat{\bm V}}\rangle_{\mathcal{E}_h}
    &\,=\,
    \langle\bm\varepsilon_{\widehat{\bm\sigma}}\,\bm n_h,\bm\varepsilon_{\widehat{\bm V}}\rangle_{\mathcal{E}_h\backslash\Gamma_h}
    \,+\,\langle\bm\varepsilon_{\widehat{\bm\sigma}}\,\bm n_h,\bm\varepsilon_{\widehat{\bm V}}\rangle_{\Gamma_h^{D}}
    \,+\,\langle\bm\varepsilon_{\widehat{\bm\sigma}}\,\bm n_h,\bm\varepsilon_{\widehat{\bm V}}\rangle_{\Gamma_h^{N}}\\
    &\,=\,
    \langle\bm g^D - \bm g^{D}_h,\bm\varepsilon_{\widehat{\bm 
    \sigma}}\,\bm n_h\rangle_{\Gamma_h^{D}}
    \,+\,\langle(G-G_h)\bm n\circ\bm\phi,\bm\varepsilon_{\widehat{\bm V}}\rangle_{\Gamma_h^N}\,.
\end{align*}
and the result follows.
\qed\end{proof}

Let us now introduce a key lemma for the error analysis,
\begin{lemma}\label{lem:key_ineq_V}
Suppose that $\mathcal T_h$ is an admissible triangulation and define
\begin{equation}\label{def:energy-norm-V}
	\vertiii{(\bm\varepsilon_{\bm\sigma},\bm\varepsilon_{\bm V}-\bm\varepsilon_{\widehat{\bm V}},\bm g^D-\bm g_h^D)}
	\,:=\,\left(\norm{\bm\varepsilon_{\bm \sigma}}_{D_h}^2
	\,+\,\norm{\bm\varepsilon_{\bm V}-\bm\varepsilon_{\widehat{\bm V}}}_{\mathcal{E}_h,\tau}^2
	\,+\,\norm{\bm g^{D}-\bm g^{D}_h}_{\Gamma_h^{D},l^{-1}}^2\right)^{1/2}\,.
\end{equation}    
The following inequality holds
\begin{align}
	\label{eq:key_ineq_V}
    \vertiii{(\bm\varepsilon_{\bm\sigma},\bm\varepsilon_{\bm V}-\bm\varepsilon_{\widehat{\bm V}},\bm g^D - \bm g_h^D)}\,
        \lesssim\,& \norm{\boldsymbol{I}_{\bm\sigma}}_{D_h}
        \,+\,R^{1/2}\,\norm{\boldsymbol{I}_{\bm\sigma}\bm n_h}_{\Gamma_h^{D},h^{\perp}}
        \,+\,\left\|\left(h^{-1/2}\tfrac{\epsilon}{2}+\tau^{-1/2}\right)(G-G_h)\bm n\circ\bm\phi\right\|_{\Gamma_h^N} \\
        &+\,R\,\norm{\partial_{\bm n_h}(\boldsymbol{I}_{\bm \sigma}\bm n_h)}_{D_h^c,(h^{\perp})^2}
        \,+\,\|\tfrac{1}{2\epsilon}C_{tr}\boldsymbol{\varepsilon}_{\bm V} \|_{D_h}\,,\nonumber
\end{align}
with $\epsilon>0$ a parameter at our disposal and $C_{tr}$ a positive constant independent of $h$.
\end{lemma}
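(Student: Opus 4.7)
The plan is to start from the energy identity in Lemma \ref{lemma:identity_V_1}, manipulate its Dirichlet boundary term until the full squared energy norm $\vertiii{\cdot}^2$ appears on the left-hand side, and then bound the remaining cross terms using Young's inequality together with the auxiliary estimates \eqref{est:Lambda2}, \eqref{eq:Rbound} and \eqref{eq:Hbound}.

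The first step is to rewrite $\bm\varepsilon_{\widehat{\bm\sigma}}\,\bm n_h$ on $\Gamma_h^D$. Combining the flux identity \eqref{lemma:energy_arg_V_f} with Lemma \ref{cor:identity_projerr_Esigma} one obtains
\[
\bm\varepsilon_{\widehat{\bm\sigma}}\,\bm n_h \,=\, l^{-1}(\bm g^D-\bm g_h^D)\,-\,\Lambda^{\boldsymbol I_{\bm\sigma}}\,-\,\Lambda^{\bm\varepsilon_{\bm\sigma}}\,-\,\boldsymbol I_{\bm\sigma}\,\bm n_h\,+\,\tau(\bm\varepsilon_{\bm V}-\bm\varepsilon_{\widehat{\bm V}}),
\]
so that $\langle\bm g^D-\bm g_h^D,\bm\varepsilon_{\widehat{\bm\sigma}}\,\bm n_h\rangle_{\Gamma_h^D}$ produces the squared contribution $\norm{l^{-1/2}(\bm g^D-\bm g_h^D)}^2_{\Gamma_h^D}$ together with four cross terms that are moved to the right-hand side. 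Together with $\norm{\bm\varepsilon_{\bm\sigma}}_{D_h}^2$ and $\norm{\tau^{1/2}(\bm\varepsilon_{\bm V}-\bm\varepsilon_{\widehat{\bm V}})}_{\mathcal E_h}^2$ already present in Lemma \ref{lemma:identity_V_1}, this completes $\vertiii{\cdot}^2$ on the left.

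The cross terms are bounded by Young's inequality. The source $-(\boldsymbol I_{\bm\sigma},\bm\varepsilon_{\bm\sigma})_{D_h}$ produces the $\norm{\boldsymbol I_{\bm\sigma}}_{D_h}$ contribution. The inner product $\langle\bm g^D-\bm g_h^D,\boldsymbol I_{\bm\sigma}\,\bm n_h\rangle_{\Gamma_h^D}$, weighted with $l^{\mp 1/2}$ and using $l(\bm x)\leq R\,h_e^\perp$, yields the $R^{1/2}\norm{\boldsymbol I_{\bm\sigma}\,\bm n_h}_{\Gamma_h^D,h^{\perp}}$ term. The term involving $\Lambda^{\boldsymbol I_{\bm\sigma}}$ is controlled through its pointwise representation as an integral of $\partial_{\bm n_h}(\boldsymbol I_{\bm\sigma}\,\bm n_h)$ along the transfer paths, producing the $R\norm{\partial_{\bm n_h}(\boldsymbol I_{\bm\sigma}\,\bm n_h)}_{D_h^c,(h^\perp)^2}$ term. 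The term involving $\Lambda^{\bm\varepsilon_{\bm\sigma}}$ is bounded by the inverse estimate \eqref{est:Lambda2} which gives a factor of order $R^{3/2}\,C^{ext}_e C^{inv}_e\norm{\bm\varepsilon_{\bm\sigma}}_{D_h}$; the assumption \eqref{eq:Rbound} allows this contribution to be absorbed into $\norm{\bm\varepsilon_{\bm\sigma}}_{D_h}^2$ on the left. The remaining cross term $\langle\bm g^D-\bm g_h^D,\tau(\bm\varepsilon_{\bm V}-\bm\varepsilon_{\widehat{\bm V}})\rangle_{\Gamma_h^D}$ is split by Young's inequality and absorbed using $\tau l\leq \tau H^\perp_e\leq 1/4$, which follows from \eqref{eq:Hbound}.

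The Neumann term $\langle(G-G_h)\bm n\circ\bm\phi,\bm\varepsilon_{\widehat{\bm V}}\rangle_{\Gamma_h^N}$ is handled through the decomposition
\[
\langle(G-G_h)\bm n\circ\bm\phi,\bm\varepsilon_{\widehat{\bm V}}-\bm\varepsilon_{\bm V}\rangle_{\Gamma_h^N}\,+\,\langle(G-G_h)\bm n\circ\bm\phi,\bm\varepsilon_{\bm V}\rangle_{\Gamma_h^N}.
\]
On the first piece, Young's inequality with weights $\tau^{\mp 1/2}$ produces the $\tau^{-1/2}$-weighted data contribution and a term absorbed into $\norm{\tau^{1/2}(\bm\varepsilon_{\bm V}-\bm\varepsilon_{\widehat{\bm V}})}_{\mathcal E_h}^2$. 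On the second piece, Young's inequality with free parameter $\epsilon$ followed by the discrete trace inequality $\norm{\bm\varepsilon_{\bm V}}_{\Gamma_h^N}\leq C_{tr}\,h^{-1/2}\norm{\bm\varepsilon_{\bm V}}_{D_h}$ produces the $h^{-1/2}(\epsilon/2)$-weighted data term together with $\norm{\tfrac{C_{tr}}{2\epsilon}\bm\varepsilon_{\bm V}}_{D_h}$. Taking square roots after the absorption step yields the claimed bound.

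The main obstacle is the precise bookkeeping of constants in all the Young's inequalities: the portions of $\norm{\bm\varepsilon_{\bm\sigma}}_{D_h}^2$, $\norm{\tau^{1/2}(\bm\varepsilon_{\bm V}-\bm\varepsilon_{\widehat{\bm V}})}_{\mathcal E_h}^2$ and $\norm{l^{-1/2}(\bm g^D-\bm g_h^D)}_{\Gamma_h^D}^2$ generated on the right must aggregate to a constant strictly less than one before they can be absorbed into the left-hand side, and it is precisely this requirement that brings the local proximity condition \eqref{eq:Hbound} and the $R$-restriction \eqref{eq:Rbound} together into the proof.
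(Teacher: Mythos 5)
Your proposal follows essentially the same route as the paper: starting from the energy identity of Lemma \ref{lemma:identity_V_1}, using Lemma \ref{cor:identity_projerr_Esigma} together with \eqref{lemma:energy_arg_V_f} to extract $\norm{l^{-1/2}(\bm g^D-\bm g_h^D)}_{\Gamma_h^D}^2$, bounding the cross terms via Young's inequality with \eqref{est:Lambda1}--\eqref{est:Lambda2}, absorbing the $\Lambda^{\bm\varepsilon_{\bm\sigma}}$ contribution through \eqref{eq:Rbound}, and splitting the Neumann term exactly as in \eqref{eq:ineq2} with the discrete trace inequality. The only difference is that you make explicit the role of \eqref{eq:Hbound} in absorbing the $\tau\,l$-weighted cross term, which the paper uses only implicitly; the argument is correct.
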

\begin{proof}

Applying the Cauchy-Schwarz inequality followed by Young's inequality in \eqref{eq:identity_V_1} we get
		\begin{align}
	\label{aux:key_ineq_V_1}
	\begin{array}{c}
	    \displaystyle\frac{1}{2}\,\norm{\bm\varepsilon_{\bm\sigma}}_{D_h}^2
		+\norm{\tau^{1/2}\,(\bm\varepsilon_{\bm V}-\bm\varepsilon_{\widehat{\bm V}})}_{\mathcal{E}_h}^2
		+\langle\bm g^D - \bm g_h^D,\bm\varepsilon_{\widehat{\bm\sigma}}\,\bm n_h\rangle_{\Gamma_h^{D}}
        +\langle(G-G_h)\bm n\circ\bm\phi,\bm\varepsilon_{\widehat{\bm V}}\rangle_{\Gamma_h^{\bm N}}
		\leq \frac{1}{2}\,\norm{\boldsymbol{I}_{\bm\sigma}}_{D_h}^2\,.
	\end{array}
	\end{align}
    On the other hand, note that combining  \eqref{lemma:energy_arg_V_f} and \eqref{eq:identity_projerr_Esigma}, the following equation holds,
	\begin{align*}
		\langle\bm\varepsilon_{\widehat{\bm\sigma}}\,\bm n_h,\bm g^D - \bm g_h^D\rangle_{\Gamma_h^{D}}
		\,=\,& \norm{l^{-1/2}\,(\bm g^D - \bm g_h^D)}_{\Gamma_h^{D}}^{2}
		\,-\,\langle\Lambda^{\boldsymbol{I}_{\bm\sigma}},\bm g^D - \bm g_h^D\rangle_{\Gamma_h^{D}}
		\,-\,\langle\Lambda^{\bm\varepsilon_{\bm\sigma}},\bm g^D - \bm g_h^D\rangle_{\Gamma_h^{D}}\\
		&
		\,-\,\langle\boldsymbol{I}_{\bm\sigma}\,\bm n_h,\bm g^D - \bm g_h^D\rangle_{\Gamma_h^{D}}
		\,+\,\langle\tau\,(\bm\varepsilon_{\bm V}-\bm\varepsilon_{\widehat{\bm V}}),\bm g^D - \bm g_h^D\rangle_{\Gamma_h^{D}}\,.
	\end{align*}
Then, substituting the expression obtained above in \eqref{aux:key_ineq_V_1} and performing some algebraic manipulations, we obtain
    {\small \begin{align}
    \nonumber
	    \frac{1}{2}\,\norm{\bm\varepsilon_{\bm\sigma}}_{D_h}^2
		\,+&\,\norm{\tau^{1/2}\,(\bm\varepsilon_{\bm V}-\bm\varepsilon_{\widehat{\bm V}})}_{\mathcal{E}_h}^2
		\,+\,\norm{ l^{-1/2}\,(\bm g^D - \bm g_h^D)}_{\Gamma_h^{D}}^2\\[1ex]
        \nonumber
        \leq\,& 
		\frac{1}{2}\,\norm{\boldsymbol{I}_{\bm\sigma}}_{D_h}^2
		\,+\,\langle\Lambda^{\boldsymbol{I}_{\bm\sigma}},\bm g^D - \bm g_h^D\rangle_{\Gamma_h^{D}}
		\,+\,\langle\Lambda^{\bm\varepsilon_{\bm\sigma}},\bm g^D - \bm g_h^D\rangle_{\Gamma_h^{D}}
		\,+\,\langle\boldsymbol{I}_{\bm\sigma}\,\bm n_h,\bm g^D - \bm g_h^D\rangle_{\Gamma_h^{D}}\\[1ex]
        \nonumber
        &-\,\langle\tau\,(\bm\varepsilon_{\bm V}-\bm\varepsilon_{\widehat{\bm V}}),\bm g^D - \bm g_h^D\rangle_{\Gamma_h^{D}}
		\,-\,\langle(G-G_h)\bm n\circ\bm \phi,\bm\varepsilon_{\widehat{\bm V}}\rangle_{\Gamma_h^N}\\[1ex]
		\nonumber
        =& \,
		\frac{1}{2}\,\norm{\boldsymbol{I}_{\bm\sigma}}_{D_h}^2 \\[1ex]
        \nonumber
		&\,+\,\langle\Lambda^{\boldsymbol{I}_{\bm\sigma}},\bm g^D - \bm g_h^D\rangle_{\Gamma_h^{D}}
		\,+\,\langle\Lambda^{\bm\varepsilon_{\bm\sigma}},\bm g^D - \bm g_h^D\rangle_{\Gamma_h^{D}}
		\,+\,\langle\boldsymbol{I}_{\bm\sigma}\,\bm n_h,\bm g^D - \bm g_h^D\rangle_{\Gamma_h^{D}} -\,\langle\tau\,(\bm\varepsilon_{\bm V}-\bm\varepsilon_{\widehat{\bm V}}),\bm g^D - \bm g_h^D\rangle_{\Gamma_h^{D}}\\[1ex]
        \label{eq:ineq0}
		& \,+\,\langle\tau^{1/2}(\bm\varepsilon_{\bm V}-\bm\varepsilon_{\widehat{\bm V}}),\tau^{-1/2}\,(G-G_h)\bm n\circ\bm\phi\rangle_{\Gamma_h^{N}} -\,\langle\bm\varepsilon_{\bm V},(G-G_h)\bm n\circ\bm\phi\rangle_{\Gamma_h^{N}}\,.
	\end{align} }
Focusing on the expression \eqref{eq:ineq0}, we note that applying successively the Cauchy-Schwarz inequality and Young's $ab \leq \tfrac{\epsilon}{2}a^2 + \tfrac{1}{2\epsilon}b^2$ inequality (with constants $\epsilon_i$ for $i=1,2,3,4$) to each of the terms on the the second--to last line above, it follows that 
{\small \begin{align}
\nonumber
\langle\Lambda^{\boldsymbol{I}_{\bm\sigma}},\bm g^D &\,-\bm g_h^D\rangle_{\Gamma_h^{D}}
	+\langle\Lambda^{\bm\varepsilon_{\bm\sigma}},\bm g^D - \bm g_h^D\rangle_{\Gamma_h^{D}}
		+\langle\boldsymbol{I}_{\bm\sigma}\bm n_h,\bm g^D - \bm g_h^D\rangle_{\Gamma_h^{D}}
		-\langle\tau(\bm\varepsilon_{\bm V}-\bm\varepsilon_{\widehat{\bm V}}),\bm g^D - \bm g_h^D\rangle_{\Gamma_h^{D}} \\[1ex]
        \nonumber
        \leq\, & \frac{1}{2\epsilon_1}\norm{l^{1/2}\Lambda^{\boldsymbol{I}_{\bm\sigma}}}_{\Gamma_h^D}^2
    +\frac{1}{2\epsilon_2}\norm{l^{1/2}\Lambda^{\bm\varepsilon_{\bm\sigma}}}_{\Gamma_h^D}^2
    +\frac{1}{2\epsilon_3}\norm{l^{1/2}\boldsymbol{I}_{\bm\sigma}\bm n_h}_{\Gamma_h^{D}}^2 +\frac{1}{2\epsilon_4}\norm{l^{1/2}\tau(\boldsymbol{\varepsilon}_{\bm V}-\boldsymbol{\varepsilon}_{\widehat{\bm V}})}_{\Gamma_h^{D}}^2 \\[1ex]
    \nonumber
    \quad&+ \frac{1}{2}(\epsilon_1+\epsilon_2+\epsilon_3+\epsilon_4)\norm{ l^{-1/2}(\bm g^D - \bm g_h^D)}_{\Gamma_h^{D}}^2\\[1ex]
    \label{eq:ineq1}
    \quad
        =\, & 3(\norm{l^{1/2}\Lambda^{\boldsymbol{I}_{\bm\sigma}}}_{\Gamma_h^D}^2
    +\norm{l^{1/2}\Lambda^{\bm\varepsilon_{\bm\sigma}}}_{\Gamma_h^D}^2
    +\norm{l^{1/2}\boldsymbol{I}_{\bm\sigma}\bm n_h}_{\Gamma_h^{D}}^2) +\frac{1}{2}\norm{l^{1/2}\tau(\boldsymbol{\varepsilon}_{\bm V}-\boldsymbol{\varepsilon}_{\widehat{\bm V}})}_{\Gamma_h^{D}}^2 + \frac{3}{4}\norm{ l^{-1/2}(\bm g^D - \bm g_h^D)}_{\Gamma_h^{D}}^2\,,
\end{align} }
where we have chosen $\epsilon_1=\epsilon_2=\epsilon_3 = 1/6$ and $\epsilon_4 = 1$. By a similar argument we have for the last line in \eqref{eq:ineq0},
{\small \begin{alignat}{6}
\nonumber
\langle\tau^{1/2}(&\,\bm\varepsilon_{\bm V}-\bm\varepsilon_{\widehat{\bm V}}),\tau^{-1/2}(G-G_h)\bm n\circ\bm\phi\rangle_{\Gamma_h^{N}}-\langle\bm\varepsilon_{\bm V},(G-G_h)\bm n\circ\bm\phi\rangle_{\Gamma_h^{N}} &&\\[1ex]
\nonumber
\leq\,&\left\|\left(h^{-1/2}\tfrac{\epsilon}{2}+\tau^{-1/2}\right)(G-G_h)\bm n\circ\bm\phi\right\|_{\Gamma_h^N}^2 
+ \tfrac{1}{4}\| \tau^{1/2}(\boldsymbol{\varepsilon}_{\bm V}-\boldsymbol{\varepsilon}_{\widehat{\bm V}}) \|_{\Gamma_h^N}^2 
+ \left\|\tfrac{1}{2\epsilon} h^{1/2} \boldsymbol{\varepsilon}_{\bm V} \right\|_{\Gamma_h^N}^2 &&\quad \text{\footnotesize (Cauchy-Schwarz \& Young)}\\[1ex]
\label{eq:ineq2}
\leq\,& \left\|\left(h^{-1/2}\tfrac{\epsilon}{2}+\tau^{-1/2}\right)(G-G_h)\bm n\circ\bm\phi\right\|_{\Gamma_h^N}^2 
+ \tfrac{1}{4}\| \tau^{1/2}(\boldsymbol{\varepsilon}_{\bm V}-\boldsymbol{\varepsilon}_{\widehat{\bm V}}) \|_{\Gamma_h^N}^2 + \left\|\tfrac{1}{2\epsilon}C_{tr}\boldsymbol{\varepsilon}_{\bm V} \right\|_{D_h}^2 &&\quad \text{\footnotesize (discrete trace inequality)\,,}
\end{alignat}
with $\epsilon>0$ a parameter at our disposal.}
Substituting the estimates \eqref{eq:ineq1} and \eqref{eq:ineq2} into \eqref{eq:ineq0} it follows that
\begin{align*}
    \frac{1}{2}\,\norm{\bm\varepsilon_{\bm\sigma}}_{D_h}^2
    \,+&\,\frac{1}{4}\,\norm{\tau^{1/2}\,(\bm\varepsilon_{\bm V}-\bm\varepsilon_{\widehat{\bm V}})}_{\mathcal{E}_h}^2
    \,+\,\frac{1}{4}\,\norm{ l^{-1/2}\,(\bm g^D - \bm g_h^D)}_{\Gamma_h^{D}}\\
    \leq &\,
    \frac{1}{2}\,\norm{\boldsymbol{I}_{\bm\sigma}}_{D_h}^2
    \,+\,3\,\norm{l^{1/2}\,\Lambda^{\boldsymbol{I}_{\bm\sigma}}}_{\Gamma_h^D}^2
    \,+\,3\,\norm{l^{1/2}\,\Lambda^{\bm\varepsilon_{\bm\sigma}}}_{\Gamma_h^D}^2
    \,+\,3\,\norm{l^{1/2}\,\boldsymbol{I}_{\bm\sigma}\bm n_h}_{\Gamma_h^{D}}^2\\
    &\,+\, \left\|\left(h^{-1/2}\tfrac{\epsilon}{2}+\tau^{-1/2}\right)(G-G_h)\bm n\circ\bm\phi\right\|_{\Gamma_h^N}^2 
    +\,\left\|\tfrac{1}{2\epsilon}C_{tr}\boldsymbol{\varepsilon}_{\bm V} \right\|_{D_h}^2\,.
\end{align*} 
Then applying \eqref{est:Lambda1}, \eqref{est:Lambda2}, and taking into account that $l(\boldsymbol{x})\leq H_e^{\perp} = r_e\,h_e^{\perp}$, we find
\begin{align*}
    \frac{1}{2}\,\norm{\bm\varepsilon_{\bm\sigma}}_{D_h}^2
    \,+&\,\frac{1}{4}\,\norm{\tau^{1/2}\,(\bm\varepsilon_{\bm V}-\bm\varepsilon_{\widehat{\bm V}})}_{\mathcal{E}_h}^2
    \,+\,\frac{1}{4}\,\norm{ l^{-1/2}\,(\bm g^D - \bm g_h^D)}_{\Gamma_h^{D}}\\
    \leq &\,
    \frac{1}{2}\,\norm{\boldsymbol{I}_{\bm\sigma}}_{D_h}^2
    \,+\,R^2\,\norm{\partial_{\boldsymbol{n_h}}(\boldsymbol{I}_{\boldsymbol{\sigma}}\boldsymbol{n}_h)}_{D_h^c, (h^{\perp})^2}^2
    \,+\,R^3(C^{ext}_e C^{inv}_e)^2\norm{\boldsymbol{\varepsilon}_{\boldsymbol{\sigma}}}_{D_h}^2
    \,+\,3\,\norm{r_e^{1/2}(h_e^{\perp})^{1/2}\boldsymbol{I}_{\bm\sigma}\bm n_h}_{\Gamma_h^{D}}^2\\
    &\,+\,\left\|\left(h^{-1/2}\tfrac{\epsilon}{2}+\tau^{-1/2}\right)(G-G_h)\bm n\circ\bm\phi\right\|_{\Gamma_h^N}^2 
    +\,\left\|\tfrac{1}{2\epsilon}C_{tr}\boldsymbol{\varepsilon}_{\bm V} \right\|_{D_h}^2\\
    \leq &\,
    \frac{1}{2}\,\norm{\boldsymbol{I}_{\bm\sigma}}_{D_h}^2
    \,+\,R^2\,\norm{\partial_{\boldsymbol{n_h}}(\boldsymbol{I}_{\boldsymbol{\sigma}}\boldsymbol{n}_h)}_{D_h^c, (h^{\perp})^2}^2
    \,+\,R^3(C^{ext}_e C^{inv}_e)^2\norm{\boldsymbol{\varepsilon}_{\boldsymbol{\sigma}}}_{D_h}^2
    \,+\,3\,R\,\norm{\boldsymbol{I}_{\bm\sigma}\bm n_h}_{\Gamma_h^{D}, h^{\perp}}^2\\
    &\,+\,\left\|\left(h^{-1/2}\tfrac{\epsilon}{2}+\tau^{-1/2}\right)(G-G_h)\bm n\circ\bm\phi\right\|_{\Gamma_h^N}^2 
    +\,\left\|\tfrac{1}{2\epsilon}C_{tr}\boldsymbol{\varepsilon}_{\bm V} \right\|_{D_h}^2\,.
\end{align*} 
Finally, by \eqref{eq:Rbound}, we have that $R^3(C^{ext}_e C^{inv}_e)^2<1/2$, and rearranging terms, \eqref{eq:key_ineq_V} holds.
\qed\end{proof}

To derive the estimates for $\norm{\bm \varepsilon_{\bm\sigma}}_{D_h}$ we need to control the term $\norm{(G-G_h)\bm n\circ\bm\phi}_{\Gamma_h^N}$. We achieve that in the following lemma.

\begin{lemma}\label{lemma:rate_normGs}
Let $\boldsymbol{p}$ and $\boldsymbol{r}$ be the solutions of the state problem \eqref{HDG_mixed_formualtion_state} and adjoint problem \eqref{HDG_mixed_formualtion_adjoint}, respectively. There exists a positive constant $C_G$ such that
    \begin{equation}\label{ineq:GGh}
    \begin{array}{l}
        \displaystyle\norm{(G(\Gamma)- G_h(\Gamma))\bm n\circ\bm\phi}_{\Gamma_h^{N}}
        \leq C_G \big(
        \norm{(\bm r-\bm r_h)\cdot\bm n}_{\Gamma_{N}} \norm{(\bm p-\bm p_h)\cdot\bm n}_{\Gamma_{N}}\\[2ex]
        \displaystyle\qquad
        + \norm{\bm r}_{\bm H^{1}(\Omega)}\,\norm{(\bm p-\bm p_h)\cdot\bm n}_{\Gamma_{N}}
        + \norm{\bm p}_{\bm H^1(\Omega)}\,\norm{(\bm r -\bm r_h)\cdot\bm n}_{\Gamma_{N}}
        + \norm{(\bm r -\bm r_h)\cdot\bm n}_{\Gamma_{N}} \big)\,,
    \end{array}
    \end{equation}
    Furthermore, for $\boldsymbol{q} \in\{\boldsymbol{p}, \boldsymbol{r}\}$, there holds
    \begin{align}\label{eq:rate_normGs-2}
        & \norm{(\bm q-\bm q_h)\cdot\bm n}_{\Gamma_N}
        \,\lesssim \,
        h^{m + 1/2} (\norm{\boldsymbol{q}}_{\boldsymbol{H}^{m+1}(\Omega)} + |y|_{H^{k+1}(\Omega)})
        + h^{-1/2} \norm{\boldsymbol{I}_{\boldsymbol{q}}}_{D_h} 
        + R h^{-1/2} \norm{\boldsymbol{\varepsilon}_{\boldsymbol{q}}}_{D_h}\,.
    \end{align}
\end{lemma}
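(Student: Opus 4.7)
For the bound \eqref{ineq:GGh}, I will begin by expanding $G(\Gamma) - G_h(\Gamma)$ using the definitions \eqref{eq:G(Gamma)} and \eqref{eq:G_h(Gamma)}. The quadratic contributions $\tfrac{1}{2}(g-\widetilde y)^2$ cancel, leaving
\[
G - G_h = \bm r\cdot\bm n\,\bigl(a^{-1}\bm p\cdot\bm n + \partial_{\bm n}g\bigr) - \bm r_h\cdot\bm n\,\bigl(a^{-1}\bm p_h\cdot\bm n + \partial_{\bm n}g\bigr).
\]
Adding and subtracting cross products linearizes this into four parts: a quadratic error term $(\bm r - \bm r_h)\cdot\bm n\, a^{-1}(\bm p - \bm p_h)\cdot\bm n$, two terms mixing one exact factor with one error factor, and a lower--order term $(\bm r - \bm r_h)\cdot\bm n\,\partial_{\bm n}g$. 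I will then take the $L^2(\Gamma_N)$ norm, apply Cauchy--Schwarz edge--wise, and control the $L^\infty$-type factors involving $\bm p$ and $\bm r$ on $\Gamma_N$ via the Sobolev embedding $\bm H^1(\Omega)\hookrightarrow \bm L^\infty(\partial\Omega)$ combined with the trace theorem. The uniform boundedness of $a^{-1}$ and of $\partial_{\bm n}g$ (recall $g\in H^2(\mathcal U)$) absorbs the remaining constants, and the norm equivalence \eqref{eq:equivalence} transfers the estimate to the composed norm on $\Gamma_h^N$.

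For the rate estimate \eqref{eq:rate_normGs-2}, I will work on each extension patch $K_e^{ext}$ separately, decomposing the error (with the tacit extrapolation of $\bm q_h$) as $\bm q - \bm q_h = \boldsymbol I_{\bm q} + \boldsymbol\varepsilon_{\bm q}$. For the interpolation--error contribution, a trace estimate along $\Gamma_e$ combined with the approximation properties of the HDG projector $\bm\Pi_{\bm Z}$ recalled in Appendix \ref{sec:HDGprojection} produces both the $h^{m+1/2}\|\bm q\|_{\bm H^{m+1}(\Omega)}$ and the $h^{-1/2}\|\boldsymbol I_{\bm q}\|_{D_h}$ contributions. In the adjoint case the dependence on $y$ enters through the right--hand side of \eqref{HDG_mixed_formualtion_adjoint} and propagates via Theorem \ref{theorem:estimates_Omega_p.and.r}, accounting for the $h^{m+1/2}|y|_{H^{k+1}(\Omega)}$ contribution. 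For the projection--error part on $\Gamma_e$, a discrete trace inequality on $K_e^{ext}$ supplies a factor $(H_e^\perp)^{-1/2}$, while the constant $C_e^{ext}$ pulls the norm back from the extension patch to $K^e$; using $H_e^\perp = r_e\, h_e^\perp$ with $r_e\leq R$, each edge contributes $R\,h^{-1/2}\|\boldsymbol\varepsilon_{\bm q}\|_{K^e}$ (up to dependence on $k$), and summing over boundary edges yields the third term on the right--hand side.

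The main obstacle lies in keeping the proximity--parameter dependence linear in $R$: a crude application of the trace and extrapolation bounds would produce $R^{3/2}$ or worse. Achieving the claimed scaling requires carefully balancing the discrete trace step against the extrapolation step and invoking the admissibility assumption \eqref{eq:Rbound} together with the bounds \eqref{constants_C_a}--\eqref{constants_C_b}, which ensure that the composite constant $(C_e^{inv}C_e^{ext})^{2}R^{3}$ remains controlled.
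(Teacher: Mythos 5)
Your overall skeleton matches the paper's: expand $G-G_h$, linearize the product $\bm p\cdot\bm n\,\bm r\cdot\bm n-\bm p_h\cdot\bm n\,\bm r_h\cdot\bm n$ by adding and subtracting cross terms, transfer to $\Gamma_h^N$ via \eqref{eq:equivalence}; then, for the rate, split $\bm q-\bm q_h$ on each extension patch into an interpolation part and a projection part and use patch trace inequalities together with the extrapolation constant $C_e^{ext}$. However, several of your concrete mechanisms are not the ones that make the argument work, and two of them would actually fail.

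First, for \eqref{ineq:GGh} you propose to control the exact factors through the embedding $\bm H^1(\Omega)\hookrightarrow \bm L^\infty(\partial\Omega)$; this embedding is false for $d\ge 2$ (the trace of an $H^1$ function lies in $H^{1/2}(\Gamma)$, which does not embed into $L^\infty$). The paper instead keeps everything in $L^2(\Gamma_N)$ and bounds $\|\bm p\cdot\bm n\|_{\Gamma_N}$ and $\|\bm r\cdot\bm n\|_{\Gamma_N}$ by $\|\bm p\|_{\bm H^1(\Omega)}$ and $\|\bm r\|_{\bm H^1(\Omega)}$ via the continuous trace theorem, which is exactly the form of the constants appearing in \eqref{ineq:GGh}. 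Note also that the first term of \eqref{ineq:GGh} is a product of two $L^2$ error norms; an $L^\infty$--Hölder argument on one factor cannot produce that term, since neither error factor is uniformly bounded a priori. Second, for \eqref{eq:rate_normGs-2} you attribute the $|y|_{H^{k+1}(\Omega)}$ contribution to the coupling through the right--hand side of the adjoint problem and Theorem \ref{theorem:estimates_Omega_p.and.r}. That is not where it comes from: it enters through the HDG projection itself, whose flux component is coupled to the scalar variable via \eqref{HDG_proyection2_c}, so that \eqref{theorem:proyHDG_a} and \eqref{eq:grad-estimate} carry a $|y|$ seminorm even for the state flux $\bm q=\bm p$. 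Third, the ``main obstacle'' you identify concerning the $R$--scaling does not arise: the paper's discrete trace inequality \eqref{ineq:discrete-trace-h} on the patch costs $h_e^{-1/2}$ (not $(H_e^\perp)^{-1/2}$ as you assert), and the definition of $C_e^{ext}$ gives $\|\bm\varepsilon_{\bm q}\|_{K_e^{ext}}\lesssim r_e^{1/2}\|\bm\varepsilon_{\bm q}\|_{K_e}$ directly, so the proximity factor appears as $r_e^{1/2}\le R^{1/2}$ with no need to invoke \eqref{eq:Rbound}; the delicate balancing you describe is not part of the argument. The remaining ingredients you would still need to assemble explicitly are \eqref{ineq:discrete-trace-h-2}, \eqref{ineq:Ipext} and \eqref{ineq:nablaIpext} to convert the interpolation part on the patch into data on $K_e$ and $B_e$, followed by the summation over boundary edges.
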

\begin{proof}
    Let us note the following
    \begin{align*}
        G(\Gamma)\,-\,G_h(\Gamma)
        &\,=\, a^{-1}\,(\bm p\cdot\bm n\,\bm r\cdot\bm n-\bm p_h\cdot\bm n\,\bm r_h\cdot \bm n)
        \,+\,\partial_{\bm n}g\,(\bm r-\bm r_h)\cdot\bm n\,,
    \end{align*}
    hence, we have that
    \begin{align*}
        \norm{(G-G_h)\bm n\circ\bm\phi}_{\Gamma_h^N}
        \lesssim\,& \norm{(G  - G_h) \bm n}_{\Gamma_N}
        \lesssim \norm{G_h -G}_{\Gamma_N}\\
        \,\lesssim&\, \norm{\bm p\cdot\bm n\,\bm r\cdot\bm n-\bm p_h\cdot\bm n\,\bm r_h\cdot\bm n}_{\Gamma_N}
        \,+\,\norm{(\bm r-\bm r_h)\cdot\bm n}_{\Gamma_N}\\
        \,\lesssim & \, \norm{\bm p\cdot \bm n\,(\bm r-\bm r_h)\cdot\bm n}_{\Gamma_N}
        \,+\,\norm{\bm r_h\cdot\bm n\,(\bm p-\bm p_h)\cdot\bm n}_{\Gamma_N}
                \,+\,\norm{(\bm r-\bm r_h)\cdot\bm n}_{\Gamma_N}\\
         \,\lesssim&\, \norm{\bm p\cdot\bm n}_{\Gamma_N}\,\norm{(\bm r-\bm r_h)\cdot\bm n}_{\Gamma_N}
        \,\,+\norm{(\bm r-\bm r_h)\cdot\bm n}_{\Gamma_N}\,\norm{(\bm p\ -\bm p_h)\cdot\bm n}_{\Gamma_N}\\
        \,&+\,\norm{\bm r\cdot\bm n}_{\Gamma_N}\,\norm{(\bm p-\bm p_h)\cdot\bm n}_{\Gamma_N}
        \,+\,\norm{(\bm r-\bm r_h)\cdot\bm n}_{\Gamma_N}\,.
    \end{align*}
    Then, \eqref{ineq:GGh} follows by the continuous trace inequality. On the other hand, we will prove the bound of the statement only for $\norm{(\bm p-\bm p_h)\cdot\bm n}_{\Gamma_N}$, since the proof for the bound of $\norm{(\bm r-\bm r_h)\cdot\bm n}_{\Gamma_N}$ is analogous. 

Let $\Gamma_e\subset\Gamma_N$. By adding and subtracting $\boldsymbol{E}(\Pi_{\bm Z} \bm p)$ we have that    
    \begin{align*}
       h_e^{1/2} \norm{(\bm p-\bm p_h)\cdot\bm n}_{\Gamma_e}
        &\leq 
         h_e^{1/2} \norm{\bm p-\boldsymbol{E}(\Pi_{\bm Z} \bm p) }_{\Gamma_e}
        +
        h_e^{1/2}\norm{\boldsymbol{E}(\Pi_{\bm Z} \bm p)-\bm p_h}_{\Gamma_e}\,.
    \end{align*}
 We bound the first term of the right hand side of the above equation by \eqref{ineq:discrete-trace-h-2}, and by \eqref{ineq:discrete-trace-h}, we bound the second term of the above equation, thus obtaining
\[
       h_e^{1/2}\, \norm{(\bm p-\bm p_h)\cdot\bm n}_{\Gamma_e}
       \lesssim 
        \norm{{\bm I}_{\bm p}}_{K^{ext}_e \cup K_e}
        + h_e \norm{\nabla{\bm I}_{\bm p}}_{K^{ext}_e \cup K_e}
        + \norm{\bm\varepsilon_{\bm p}}_{K^{ext}_e}\,,
\]   
In turn we note that using the definition of $C^{ext}_e$ (see \cite[Lemma A.1]{CoQiuSo2014}), we have that $\norm{\boldsymbol{\varepsilon}_{\boldsymbol{p}}}_{K^{ext}_e}\lesssim r_e^{1/2}\norm{\boldsymbol{\varepsilon}_{\boldsymbol{p}}}_{K_e}$, thus we deduce
\begin{alignat*}{2}
    &h_e^{1/2} \norm{(\bm p-\bm p_h)\cdot\bm n}_{\Gamma_e}
    \lesssim 
    \norm{{\bm I}_{\bm p}}_{K^{ext}_e \cup K_e}
    + h_e \norm{\nabla{\bm I}_{\bm p}}_{K^{ext}_e \cup K_e}
    + r_e^{1/2} \norm{\bm\varepsilon_{\bm p}}_{K_e}\\
    &\quad \lesssim \norm{{\bm I}_{\bm p}}_{K^{ext}_e \cup K_e}
    + h_e^{m+1}|\boldsymbol{E}(\boldsymbol{p})|_{\boldsymbol{H}^{m+1}(B_e)}
    + r_e^{1/2} h_e^{m+1}\norm{\boldsymbol{E}(\boldsymbol{p})}_{\boldsymbol{H}^{m+1}(K_e)}\\
    &\quad\quad 
    + r_e^{1/2} \norm{\boldsymbol{I}_{\boldsymbol{p}}}_{K_e}
    + h_e \norm{\nabla \boldsymbol{I}_{\boldsymbol{p}}}_{K_e}
    + r_e^{1/2}\,\norm{\boldsymbol{\varepsilon}_{\boldsymbol{p}}}_{K_e}.
    \tag*{ \text{\footnotesize (by \eqref{ineq:nablaIpext})}}\\
    &\quad
    \lesssim (1 + r_e^{1/2}) h^{m+1} \norm{\boldsymbol{E}(\boldsymbol{p})}_{\boldsymbol{H}^{m+1}(B_e)}
    + (1 + r_e^{1/2}) \norm{\boldsymbol{I}_{\boldsymbol{p}}}_{K_e}
    + h_e \norm{\nabla \boldsymbol{I}_{\boldsymbol{p}}}_{K_e}
    + r_e^{1/2} \norm{\boldsymbol{\varepsilon}_{\boldsymbol{p}}}_{K_e}\,.\tag*{ \text{\footnotesize (by \eqref{ineq:Ipext})}}
\end{alignat*}
 Dividing by $h_e^{1/2}$, adding over all the elements and applying \eqref{eq:extension-op}, \eqref{eq:grad-estimate}, and bearing in mind the definition of $R$ and $h$, \eqref{eq:rate_normGs-2} holds. 
\end{proof}

We can note that the fourth term on the right hand side in \eqref{eq:key_ineq_V} is controlled by \cite[Lemma 3.8]{CoQiuSo2014}.
Then, the unique term that is not controlled is
$\norm{\bm\varepsilon_{\bm V}}_{D_h}$. In order to have an estimate for the latter we have to present the error estimates for $ \bm V\,-\,\bm V_h$, which is presented in the next section. Once we have performed this analysis, we will be in position to establish the error estimates for the deformation field equation.

\subsection{\texorpdfstring{Error estimates for $\bm V\,-\,\bm V_h$ }{Error estimates for EhV }}
In this section we develop the error estimates for $ \bm V\,-\,\bm V_h$, for this purpose we will follow the same strategy done for the estimates of $ e^{y}$ and $e^{z}$, i.e., we will use a dual problem to find the estimates. For any given $\bm U \in [L^2(\Omega)]^d$, let be $(\bm\gamma,\bm u)$ solution of
\begin{subequations}
\label{eq:dual_problem_V}
\begin{alignat}{6}
    \label{eq:dual_problem_V_a}
    \bm \gamma\, +\, \nabla\bm u &\,=\, 0 && \quad \mbox{in }D_h\,,\\
    \label{eq:dual_problem_V_b}
    \bdiv(\bm\gamma) &\,=\, \bm U && \quad \mbox{in }D_h\,,\\
    \label{eq:dual_problem_V_c}
    \bm u &\,=\, \bm 0 && \quad\mbox{on }\Gamma_h^D\,,\\
    \label{eq:dual_problem_V_d}
    \bm\gamma\,\bm n_h&\,=\, \bm 0 && \quad\mbox{on }\Gamma_h^N\,.
\end{alignat}
\end{subequations}
We assume that the solution to this dual problem satisfies the elliptic regularity
\begin{equation}
    \label{eliptic_reg^DP_V}
    \norm{\bm u}_{s + 1, D_h}\,+\,\norm{\bm\gamma}_{s,D_h}
    \,\leq\, C\,\norm{\bm U}_{D_h}\,,
\end{equation}
where $s \geq 0$, and $C>0 $ depend on the domain $D_h$. 

\begin{remark}
The elliptic regularity holds with $s=1$, for example, when the domain is a convex polyhedral or has a $\mathcal{C}^2$ boundary as in \cite{CoQiuSo2014}. This $H^2$-regularity is usually used to prove superconvergence properties of HDG schemes. However, since in our context the domain $D_h$ might be a nonconvex polyhedron, the purpose of this duality argument is not to show suerconvergence but rather to to bound $\norm{\bm\varepsilon_{\bm V}}_{D_h}$ and obtain error estimates even for the case $s= 0$.
\end{remark}

We have the following identity.

\begin{lemma}The projection of the errors and the interpolation errors satisfy the following identity
    \label{lemma:identity_EhV_1}
    \begin{equation}
        \label{eq:identity_EhV_1}
        (\bm\varepsilon_{\bm V},\bm U)_{D_h}
        \,=\, (\boldsymbol{I}_{\bm \sigma},\bm \Pi_{\mathbb{Z}}\bm \gamma)_{D_h}
        \,-\,(\bm\varepsilon_{\bm\sigma},\bm\gamma -\bm \Pi_{\mathbb{Z}}\bm\gamma)_{D_h}
        \,+\,\mathbb{T}_{\bm V,h}\,,
    \end{equation}
    where 
    \begin{equation}
        \label{def:TVh}
        \mathbb{T}_{\bm V, h}
        \,:=\,\langle \bm g^{D}-\bm g_h^D,\bm\gamma\,\bm n_h\rangle_{\Gamma_h^D}
        \,-\,\langle(G-G_h)\bm n\circ\bm\phi,\bm u\rangle_{\Gamma_h^N}\,.
    \end{equation}
\end{lemma}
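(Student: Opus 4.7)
The strategy is a duality argument tailored to the HDG framework: use the dual problem \eqref{eq:dual_problem_V} to express $(\bm\varepsilon_{\bm V},\bm U)_{D_h}$ and then transfer the information onto the primal error equations \eqref{lemma:energy_arg_V} through a judicious choice of test functions. The plan is to pick $\bm\psi=\bm\Pi_{\mathbb Z}\bm\gamma$ and $\bm w=\Pi_{\bm W}\bm u$, together with projections of $\bm u$ and $\bm\gamma\bm n_h$ on the skeleton, and to ride on the HDG projection properties to make the non-discrete pieces of $\bm\gamma$ and $\bm u$ disappear cleanly from the interior of the mesh.

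\textbf{Step 1.} From \eqref{eq:dual_problem_V_b} one has $(\bm\varepsilon_{\bm V},\bm U)_{D_h}=(\bm\varepsilon_{\bm V},\bdiv\bm\gamma)_{D_h}$. The commutativity of the HDG projection (Appendix \ref{sec:HDGprojection}) gives $\bdiv(\bm\Pi_{\mathbb Z}\bm\gamma)=P_{\bm W}(\bdiv\bm\gamma)$, and since $\bm\varepsilon_{\bm V}\in\bm W_h$,
\[
(\bm\varepsilon_{\bm V},\bm U)_{D_h}=(\bm\varepsilon_{\bm V},\bdiv(\bm\Pi_{\mathbb Z}\bm\gamma))_{D_h}.
\]

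\textbf{Step 2.} I would then test \eqref{lemma:energy_arg_V_a} with $\bm\psi=\bm\Pi_{\mathbb Z}\bm\gamma\in\mathbb Z_h$ and substitute the previous identity to get
\[
(\bm\varepsilon_{\bm V},\bm U)_{D_h}=(\bm I_{\bm\sigma},\bm\Pi_{\mathbb Z}\bm\gamma)_{D_h}+(\bm\varepsilon_{\bm\sigma},\bm\Pi_{\mathbb Z}\bm\gamma)_{D_h}+\langle\bm\varepsilon_{\widehat{\bm V}},\bm\Pi_{\mathbb Z}\bm\gamma\,\bm n_h\rangle_{\mathcal E_h}.
\]
The first term on the right-hand side is already the first term of \eqref{eq:identity_EhV_1}. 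Adding and subtracting $\bm\gamma$ in the second term yields $(\bm\varepsilon_{\bm\sigma},\bm\Pi_{\mathbb Z}\bm\gamma)_{D_h}=-(\bm\varepsilon_{\bm\sigma},\bm\gamma-\bm\Pi_{\mathbb Z}\bm\gamma)_{D_h}+(\bm\varepsilon_{\bm\sigma},\bm\gamma)_{D_h}$, producing the second term of \eqref{eq:identity_EhV_1}. Using $\bm\gamma=-\nabla\bm u$ from \eqref{eq:dual_problem_V_a} converts the remaining volume term into $-(\bm\varepsilon_{\bm\sigma},\nabla\bm u)_{D_h}$.

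\textbf{Step 3.} To rewrite $-(\bm\varepsilon_{\bm\sigma},\nabla\bm u)_{D_h}$ in boundary form, I would test \eqref{lemma:energy_arg_V_b} with $\bm w=\Pi_{\bm W}\bm u$, obtaining $(\bm\varepsilon_{\bm\sigma},\nabla(\Pi_{\bm W}\bm u))_{D_h}=\langle\bm\varepsilon_{\widehat{\bm\sigma}}\bm n_h,\Pi_{\bm W}\bm u\rangle_{\mathcal E_h}$. A second application of the HDG projection properties, combined with the fact that $\bdiv\bm\varepsilon_{\bm\sigma}\in W_h$ and the trace orthogonality of $\bm u-\Pi_{\bm W}\bm u$, promotes $\Pi_{\bm W}\bm u$ back to $\bm u$ on both sides, yielding $-(\bm\varepsilon_{\bm\sigma},\nabla\bm u)_{D_h}=-\langle\bm\varepsilon_{\widehat{\bm\sigma}}\bm n_h,\bm u\rangle_{\mathcal E_h}$.

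\textbf{Step 4.} It remains to manipulate
\[
\langle\bm\varepsilon_{\widehat{\bm V}},\bm\Pi_{\mathbb Z}\bm\gamma\,\bm n_h\rangle_{\mathcal E_h}-\langle\bm\varepsilon_{\widehat{\bm\sigma}}\bm n_h,\bm u\rangle_{\mathcal E_h}
\]
into $\mathbb T_{\bm V,h}$. I split the skeleton as $\mathcal E_h=(\mathcal E_h\setminus\Gamma_h)\cup\Gamma_h^D\cup\Gamma_h^N$ and use that $\bm\Pi_{\mathbb Z}\bm\gamma\,\bm n_h=P_{\bm M}(\bm\gamma\bm n_h)$ and that $\bm u$ is single-valued across interior faces, so that the interior contributions vanish by \eqref{lemma:energy_arg_V_c}. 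On $\Gamma_h^D$, the homogeneous dual Dirichlet datum $\bm u=\boldsymbol 0$ annihilates the second summand, while \eqref{lemma:energy_arg_V_e} converts the first one into $\langle\bm g^D-\bm g_h^D,\bm\gamma\bm n_h\rangle_{\Gamma_h^D}$. On $\Gamma_h^N$, the homogeneous dual Neumann datum $\bm\gamma\bm n_h=\boldsymbol 0$ kills the first summand, and \eqref{lemma:energy_arg_V_d} rewrites the second one as $-\langle(G-G_h)\bm n\circ\bm\phi,\bm u\rangle_{\Gamma_h^N}$. Adding up recovers $\mathbb T_{\bm V,h}$ and completes the identity.

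\textbf{Main obstacle.} The delicate part is the bookkeeping in Step 4: one must carefully use the HDG projection's trace orthogonality to discard the non-polynomial parts of $\bm u$ and $\bm\gamma$ on interior faces, and then lean on the contrasting boundary conditions of the primal scheme (inhomogeneous Dirichlet and Neumann) against those of the dual problem (both homogeneous) so that each of the two surface integrals in $\mathbb T_{\bm V,h}$ isolates exactly one type of boundary datum. Once that separation is made transparent, the calculation reduces to routine algebraic rearrangement.
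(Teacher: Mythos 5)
Your overall strategy coincides with the paper's: test the error equations \eqref{lemma:energy_arg_V} with the HDG projections of the dual pair $(\bm\gamma,\bm u)$ and sort the resulting boundary terms according to the contrasting boundary conditions of the primal and dual problems. The final identity is correct, but three of your intermediate steps invoke properties that the coupled HDG projection \eqref{HDG_proyection2} does not possess, each failing by a term proportional to $\tau$. In Step 1 the claimed commutativity $\bdiv(\bm\Pi_{\mathbb Z}\bm\gamma)=P_{\bm W}(\bdiv\bm\gamma)$ is false: integrating by parts and using \eqref{HDG_proyection2_a} and \eqref{HDG_proyection2_c} gives, for every $\bm w_h\in\bm W_h$,
\[
(\bdiv(\bm\Pi_{\mathbb Z}\bm\gamma),\bm w_h)_{K}\,=\,(\bdiv\bm\gamma,\bm w_h)_{K}\,+\,\tau\,\langle\bm u-\bm\Pi_{\bm W}\bm u,\bm w_h\rangle_{\partial K}\,,
\]
because the face condition \eqref{HDG_proyection2_c} constrains only the combination $\bm\Pi_{\mathbb Z}\bm\gamma\,\bm n+\tau\,\bm\Pi_{\bm W}\bm u$, not $\bm\Pi_{\mathbb Z}\bm\gamma\,\bm n$ alone. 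For the same reason, the ``trace orthogonality of $\bm u-\bm\Pi_{\bm W}\bm u$'' invoked in Step 3 does not exist (the volume condition \eqref{HDG_proyection2_b} only tests against $[\mathbb{P}_{k-1}]^d$, and there is no separate face condition on $\bm\Pi_{\bm W}$), and the identity $\bm\Pi_{\mathbb Z}\bm\gamma\,\bm n_h=P_{\bm M}(\bm\gamma\,\bm n_h)$ used in Step 4 is off by $\tau\,P_{\bm M}(\bm u-\bm\Pi_{\bm W}\bm u)$; in particular $\bm\Pi_{\mathbb Z}\bm\gamma\,\bm n_h$ does not vanish on $\Gamma_h^N$ even though $\bm\gamma\,\bm n_h$ does.

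It so happens that the three corrections you drop are, over $\partial\mathcal T_h$, equal to $-\tau\langle\bm u-\bm\Pi_{\bm W}\bm u,\bm\varepsilon_{\bm V}\rangle$, $+\tau\langle\bm\varepsilon_{\bm V}-\bm\varepsilon_{\widehat{\bm V}},\bm u-\bm\Pi_{\bm W}\bm u\rangle$ and $+\tau\langle\bm\varepsilon_{\widehat{\bm V}},\bm u-\bm\Pi_{\bm W}\bm u\rangle$, which sum to zero, so the final identity survives. But this cancellation is precisely the crux of the paper's argument, where all residual terms are retained and grouped into $\langle\bm\varepsilon_{\widehat{\bm V}}-\bm\varepsilon_{\bm V},(\bm\Pi_{\mathbb Z}\bm\gamma-\bm\gamma)\bm n_h+\tau(\bm\Pi_{\bm W}\bm u-\bm u)\rangle_{\mathcal E_h}$ and annihilated by \eqref{HDG_proyection2_c}. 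As written, your proof asserts three false identities and never exhibits the cancellation that rescues it, so the argument is incomplete; to repair it you must track the $\tau$--corrections explicitly and show that they cancel, which amounts to reproducing the paper's use of the coupled face condition.
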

\begin{proof}
    Let us note that bearing in mind 
    \eqref{eq:dual_problem_V_a}, \eqref{eq:dual_problem_V_b}, we get
    \begin{align*}
        (\bm\varepsilon_{\bm V},\bm U)_{D_h}
        \,=\,& (\bm \varepsilon_{\bm V},\bdiv(\bm\gamma))_{D_h}
        \,=\, (\bm\varepsilon_{\bm V},\bdiv(\bm\gamma))_{D_h}
        \,-\,(\bm\varepsilon_{\bm \sigma},\bm\gamma)_{D_h}
        \,-\,(\bm\varepsilon_{\bm\sigma},\nabla \bm u)_{D_h}\\
        \,=\,& (\bm\varepsilon_{\bm V},\bdiv(\bm \Pi_{\mathbb{Z}}\bm\gamma))_{D_h}
        \,+\,(\bm\varepsilon_{\bm V},\bdiv(\bm\gamma -\bm \Pi_{\mathbb{Z}}\bm\gamma))_{D_h}
        \,-\,(\bm\varepsilon_{\bm \sigma},\bm\Pi_{\mathbb{Z}}\bm\gamma)_{D_h}
        \,-\,(\bm\varepsilon_{\bm \sigma},\bm\gamma-\bm\Pi_{\mathbb{Z}}\bm\gamma)_{D_h}\\
        &
        \,-\,(\bm\varepsilon_{\bm\sigma},\nabla \bm\Pi_{\bm W}\bm u)_{D_h}
        \,-\,(\bm\varepsilon_{\bm\sigma},\nabla (\bm u-\bm\Pi_{\bm W}\bm u))_{D_h}\,.
    \end{align*}
 Setting $\bm \psi\,=\, \bm\Pi_{\mathbb{Z}}\bm\gamma$ in \eqref{lemma:energy_arg_V_a} and $\bm w \,=\, \bm\Pi_{\bm W}\bm u$ in \eqref{lemma:energy_arg_V_b}, implies that 
    \begin{align*}
        (\bm\varepsilon_{\bm V},\bm U)_{D_h}
              &\,=\, (\boldsymbol{I}_{\bm\sigma},\bm\Pi_{\mathbb{Z}}\bm\gamma)_{D_h}
        \,-\,(\bm\varepsilon_{\bm\sigma},\bm\gamma-\Pi_{\mathbb{Z}}\bm\gamma)_{D_h}
        \,+\,\mathbb{T}_{\bm V,h}\,,
    \end{align*}
    where $\mathbb{T}_{\bm V, h}$ is defined as
    \begin{equation*}
        \mathbb{T}_{\bm V, h}
        \,:=\, \langle\bm\varepsilon_{\bm V},\bm\Pi_{\mathbb{Z}}\bm\gamma\,\bm n_h\rangle_{\mathcal{E}_h}
        \,-\,\langle\bm\varepsilon_{\widehat{\bm\sigma}}\,\bm n_h,\bm\Pi_{\bm W}\bm u\rangle_{\mathcal{E}_h}
        \,+\,(\bm\varepsilon_{\bm V},\bdiv(\bm \gamma-\bm\Pi_{\mathbb{Z}}\bm\gamma))_{D_h}
        \,-\,(\bm\varepsilon_{\bm\sigma},\nabla(\bm u-\bm\Pi_{\bm W}\bm u))_{D_h}\,.
    \end{equation*}
    Integrating by parts we can deduce
    \begin{align*}
        \mathbb{T}_{\bm V, h}
        &\,=\, 
        \langle\boldsymbol{\varepsilon}_{\boldsymbol{V}}, \boldsymbol{\Pi}_{\mathbb{Z}}\boldsymbol{\gamma}\boldsymbol{n}_h\rangle_{\mathcal{E}_h}
        \,-\, \langle\boldsymbol{\varepsilon}_{\widehat{\boldsymbol{\sigma}}}\boldsymbol{n}_h, \boldsymbol{\Pi}_{\boldsymbol{W}}\boldsymbol{u}\rangle_{\mathcal{E}_h}
        \,-\, (\nabla \boldsymbol{\varepsilon}_{\boldsymbol{V}}, \boldsymbol{\gamma} - \boldsymbol{\Pi}_{\mathbb{Z}}\boldsymbol{\gamma})_{D_h}
        \,+\,\langle\boldsymbol{\varepsilon}_{\boldsymbol{V}}, \boldsymbol{\gamma} - \boldsymbol{\Pi}_{\mathbb{Z}}\boldsymbol{\gamma}\rangle_{\mathcal{E}_h}\\
        &\quad\,+\,(\bdiv(\boldsymbol{\varepsilon}_{\boldsymbol{\sigma}}), \boldsymbol{u} - \boldsymbol{\Pi}_{\boldsymbol{W}}\boldsymbol{u})_{D_h}
        \,-\,\langle\boldsymbol{\varepsilon}_{\boldsymbol{\sigma}}\boldsymbol{n}_h,\boldsymbol{u} - \boldsymbol{\Pi}_{\boldsymbol{W}}\boldsymbol{u}\rangle_{\mathcal{E}_h}\\
        &\,=\,\langle\boldsymbol{\varepsilon}_{\boldsymbol{V}}, \boldsymbol{\Pi}_{\mathbb{Z}}\boldsymbol{\gamma}\boldsymbol{n}_h\rangle_{\mathcal{E}_h}
        \,-\, \langle\boldsymbol{\varepsilon}_{\widehat{\boldsymbol{\sigma}}}\boldsymbol{n}_h, \boldsymbol{\Pi}_{\boldsymbol{W}}\boldsymbol{u}\rangle_{\mathcal{E}_h}
        \,+\,\langle\boldsymbol{\varepsilon}_{\boldsymbol{V}}, \boldsymbol{\gamma} - \boldsymbol{\Pi}_{\mathbb{Z}}\boldsymbol{\gamma}\rangle_{\mathcal{E}_h}\\
        &\quad\,-\, \langle\boldsymbol{\varepsilon}_{\boldsymbol{\sigma}}\boldsymbol{n}_h,\boldsymbol{u} - \boldsymbol{\Pi}_{\boldsymbol{W}}\boldsymbol{u}\rangle_{\mathcal{E}_h}  \tag*{ \text{\footnotesize (by \eqref{HDG_proyection2_a} and \eqref{HDG_proyection2_b})}}\\
        &\,=\,\langle\bm\varepsilon_{\widehat{\bm V}}-\bm\varepsilon_{\bm V},(\bm\Pi_{\mathbb{Z}}\bm \gamma-\bm \gamma)\,\bm
        n_h\rangle_{\mathcal{E}_h}
        -\langle(\bm\varepsilon_{\widehat{\bm \sigma}} -\bm\varepsilon_{\bm\sigma})\,\bm n_h,\bm\Pi_{\bm W}\bm u-\bm u\rangle_{\mathcal{E}_h}
        +\langle\bm\varepsilon_{\widehat{\bm V}},\bm\gamma\,\bm n_h\rangle_{\mathcal{E}_h}
        -\langle\bm\varepsilon_{\widehat{\bm\sigma}}\,\bm n_h,\bm u\rangle_{\mathcal{E}_h}.
    \end{align*}
   By \eqref{lemma:energy_arg_V_c} we have that $\langle\bm \varepsilon_{\widehat{\bm\sigma}}\,\bm n_h,\bm u\rangle_{\mathcal{E}_h\backslash\Gamma_h}\,=\,0$. Moreover, since $\bm\varepsilon_{\widehat{\bm V}}$ is single-valued in $\mathcal{E}_h$ and $\bm\gamma \in H(\bdiv;\Omega)$, it follows that
    $
        \langle\bm\varepsilon_{\widehat{\bm V}},\bm\gamma\,\bm n_h\rangle_{\mathcal{E}_h} 
        \,=\,
       \langle\bm\varepsilon_{\widehat{\bm V }},\bm\gamma\,\bm n_h\rangle_{\Gamma_h}
$.
 In turn, by \eqref{lemma:energy_arg_V_f} we obtain
    \begin{align*}
        \mathbb{T}_{\bm V,h}
        \,=\,&
        \langle \bm\varepsilon_{\widehat{\bm V}}-\bm\varepsilon_{\bm V}, (\bm \Pi_{\mathbb{Z}}\bm \gamma-\bm\gamma)\,\bm n_h+\tau\,(\bm\Pi_{\bm W}\bm u-\bm u)\rangle_{\mathcal{E}_h}
        \,+\,\langle\bm\varepsilon_{\widehat{\bm V}},\bm \gamma\,\bm n_h\rangle_{\Gamma_h}
        \,-\,\langle\bm\varepsilon_{\widehat{\bm\sigma}}\,\bm n_h,\bm u\rangle_{\Gamma_h}\,.
    \end{align*}
    We note by \eqref{HDG_proyection2_c} that $\langle \bm\varepsilon_{\widehat{\bm V}}-\bm\varepsilon_{\bm V}, (\bm \Pi_{\mathbb{Z}}\bm \gamma-\bm\gamma)\,\bm n_h+\tau\,(\bm\Pi_{\bm W}\bm u-\bm u)\rangle_{\mathcal{E}_h} \,=\, 0$. Finally by \eqref{lemma:energy_arg_V_d}, \eqref{lemma:energy_arg_V_e}, \eqref{eq:dual_problem_V_c}, and \eqref{eq:dual_problem_V_d} we get
    \begin{align*}
        \mathbb{T}_{\bm V, h} 
        &\,=\,\langle \bm g^{D}-\bm g_h^D,\bm\gamma\,\bm n_h\rangle_{\Gamma_h^D}
        \,-\,\langle(G-G_h)\bm n\circ\bm\phi,\bm u\rangle_{\Gamma_h^N}\,.
    \end{align*}
\qed\end{proof}
We can now establish one of the most important results of this section. In fact, thanks to this lemma, we will be able to deduce the convergence rate of the scheme. 

\begin{lemma} \label{lemma:bound_E_h^V}
Suppose that $\mathcal T_h$ is an admissible triangulation and assume the local proximity condition \eqref{eq:localProximity} is 
    satisfied for $n\geq 1$ and $ 0 < \delta <1$ and $ s \geq 0 $ in \eqref{eliptic_reg^DP_V}. 
    Then,
    \begin{align}\nonumber
        \norm{\bm\varepsilon_{\bm V}}_{D_h}
        &\lesssim 
        \big(H_{\bm V}\norm{\boldsymbol{I}_{\bm\sigma}}_{D_h} 
        + h^{n+\delta}H_{\bm V}\norm{\partial_{\bm n_h}(\boldsymbol{I}_{\bm \sigma} \bm n_h)}_{D_h^{c},(h^{\perp})^2}
        + h^{n/2+\delta/2}H_{\bm V}  \norm{\boldsymbol{I}_{\bm\sigma} \bm n_{h}}_{\Gamma_h^{D},h^{\perp}}
        \\
        \label{eq:bound_E_h^V} 
        & \qquad +( H_{\bm V}^2h^{-1/2}+1) \norm{(G-G_h)\bm n\circ\bm\phi}_{\Gamma_h^{N}}\big),
    \end{align}
    where $ H_{\bm V}:=h^s+h^{n/2+\delta/2-1/2}$.
\end{lemma}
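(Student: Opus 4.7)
The plan is to invoke Lemma \ref{lemma:identity_EhV_1} with the test function $\bm U = \bm\varepsilon_{\bm V}$ so that the left-hand side becomes $\norm{\bm\varepsilon_{\bm V}}_{D_h}^{2}$, and then to estimate each of the three terms on the right-hand side, namely
\[
(\boldsymbol{I}_{\bm\sigma},\bm\Pi_{\mathbb{Z}}\bm\gamma)_{D_h},\qquad
(\bm\varepsilon_{\bm\sigma},\bm\gamma-\bm\Pi_{\mathbb{Z}}\bm\gamma)_{D_h},\qquad
\mathbb{T}_{\bm V,h},
\]
in a way that every factor in which $\bm\gamma$ or $\bm u$ appears is absorbed into $\norm{\bm\varepsilon_{\bm V}}_{D_h}$ by means of the elliptic regularity \eqref{eliptic_reg^DP_V}. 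Dividing through by $\norm{\bm\varepsilon_{\bm V}}_{D_h}$ at the end will then yield an inequality for $\norm{\bm\varepsilon_{\bm V}}_{D_h}$ itself; any residual $\norm{\bm\varepsilon_{\bm\sigma}}_{D_h}$ and $\norm{l^{-1/2}(\bm g^D-\bm g_h^D)}_{\Gamma_h^D}$ produced along the way will be absorbed using Lemma \ref{lem:key_ineq_V}.

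First, I would bound the two volumetric terms by Cauchy--Schwarz. The first gives $\norm{\boldsymbol{I}_{\bm\sigma}}_{D_h}\,\norm{\bm\Pi_{\mathbb{Z}}\bm\gamma}_{D_h}$, and stability of $\bm\Pi_{\mathbb{Z}}$ together with \eqref{eliptic_reg^DP_V} controls $\norm{\bm\Pi_{\mathbb{Z}}\bm\gamma}_{D_h} \lesssim \norm{\bm\varepsilon_{\bm V}}_{D_h}$; this contributes the $h^{s}\norm{\boldsymbol{I}_{\bm\sigma}}_{D_h}$ portion of the $H_{\bm V}\norm{\boldsymbol{I}_{\bm\sigma}}_{D_h}$ term. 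For the second term, the approximation property of the HDG projection $\norm{\bm\gamma-\bm\Pi_{\mathbb{Z}}\bm\gamma}_{D_h}\lesssim h^{s}\norm{\bm\gamma}_{s,D_h}\lesssim h^{s}\norm{\bm\varepsilon_{\bm V}}_{D_h}$ produces the factor $h^{s}\norm{\bm\varepsilon_{\bm\sigma}}_{D_h}\norm{\bm\varepsilon_{\bm V}}_{D_h}$, and Lemma \ref{lem:key_ineq_V} then converts $\norm{\bm\varepsilon_{\bm\sigma}}_{D_h}$ into the data terms appearing in the statement.

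Next, I would handle $\mathbb{T}_{\bm V,h}$. For the Dirichlet boundary piece $\langle\bm g^D-\bm g_h^D,\bm\gamma\,\bm n_h\rangle_{\Gamma_h^D}$, I would split as $\langle l^{-1/2}(\bm g^D-\bm g_h^D),\,l^{1/2}\bm\gamma\,\bm n_h\rangle_{\Gamma_h^D}$ and apply Cauchy--Schwarz; the proximity condition \eqref{eq:localProximity} gives $l\lesssim h^{n+\delta}$, a trace inequality produces $\norm{\bm\gamma\,\bm n_h}_{\Gamma_h^D}\lesssim h^{-1/2}\norm{\bm\gamma}_{s,D_h}$, and combining with \eqref{eliptic_reg^DP_V} yields a factor of order $h^{n/2+\delta/2-1/2}\norm{\bm\varepsilon_{\bm V}}_{D_h}$---precisely the other contribution to $H_{\bm V}$. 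For the Neumann piece $\langle(G-G_h)\bm n\circ\bm\phi,\bm u\rangle_{\Gamma_h^N}$ I would use Cauchy--Schwarz against $\norm{(G-G_h)\bm n\circ\bm\phi}_{\Gamma_h^N}$ and estimate the trace of $\bm u$ by the continuous trace and regularity, producing the coefficient multiplying $\norm{(G-G_h)\bm n\circ\bm\phi}_{\Gamma_h^N}$.

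The main obstacle is the coupling between this lemma and Lemma \ref{lem:key_ineq_V}: the latter bounds $\norm{\bm\varepsilon_{\bm\sigma}}_{D_h}$ and $\norm{l^{-1/2}(\bm g^D-\bm g_h^D)}_{\Gamma_h^D}$ only up to an additive term $\tfrac{C_{tr}}{2\epsilon}\norm{\bm\varepsilon_{\bm V}}_{D_h}$, while the present argument in turn produces a factor $\norm{\bm\varepsilon_{\bm V}}_{D_h}$ on the right. The key technical step is therefore to choose the free parameter $\epsilon$ in \eqref{eq:key_ineq_V} large enough---and, correspondingly, the mesh sufficiently fine so that the resulting multiplicative constant in front of $\norm{\bm\varepsilon_{\bm V}}_{D_h}^{2}$ is strictly less than one---so that the $\norm{\bm\varepsilon_{\bm V}}_{D_h}$ contribution can be absorbed into the left-hand side. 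Once that absorption is made, the remaining terms reorganize exactly into the five groups shown in \eqref{eq:bound_E_h^V}, with the prefactor $H_{\bm V}^{2}h^{-1/2}+1$ on $\norm{(G-G_h)\bm n\circ\bm\phi}_{\Gamma_h^N}$ arising from tracking both the direct Neumann contribution and the indirect one fed in through the key inequality.
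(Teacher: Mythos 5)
Your overall strategy is the same as the paper's: the duality identity of Lemma \ref{lemma:identity_EhV_1} with $\bm U = \bm\varepsilon_{\bm V}$, term-by-term estimation using the elliptic regularity \eqref{eliptic_reg^DP_V}, insertion of Lemma \ref{lem:key_ineq_V} to control $\norm{\bm\varepsilon_{\bm\sigma}}_{D_h}$ and $\norm{l^{-1/2}(\bm g^D-\bm g_h^D)}_{\Gamma_h^D}$, and absorption of the resulting $\norm{\bm\varepsilon_{\bm V}}_{D_h}$ contribution. However, two of your steps do not go through as written. First, your treatment of $\langle \bm g^D-\bm g_h^D,\bm\gamma\,\bm n_h\rangle_{\Gamma_h^D}$ invokes a trace inequality $\norm{\bm\gamma\,\bm n_h}_{\Gamma_h^D}\lesssim h^{-1/2}\norm{\bm\gamma}_{s,D_h}$ applied directly to $\bm\gamma$. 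Since the lemma explicitly allows $s=0$ (the case the authors emphasize, as $D_h$ may be a nonconvex polyhedron), $\bm\gamma$ is then only an $L^2$ tensor field and has no $L^2$ boundary trace, so this step fails precisely in the regime the result is meant to cover. The paper circumvents this by adding and subtracting $\bm\Pi_{\mathbb{Z}}\bm\gamma$: the projection error on $\Gamma_h$ is controlled by \eqref{eq:I_r-Gamma-estimate}, and the discrete trace inequality is applied only to the polynomial $\bm\Pi_{\mathbb{Z}}\bm\gamma$.

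Second, your prescription for the absorption---take $\epsilon$ ``large enough'' (a constant) together with a sufficiently fine mesh---does not yield the stated prefactor. With $\epsilon$ a fixed constant, the Neumann coefficient inherited from \eqref{eq:key_ineq_V} is $H_{\bm V}\left(h^{-1/2}\tfrac{\epsilon}{2}+\tau^{-1/2}\right)\lesssim H_{\bm V}h^{-1/2}+1$, which is strictly weaker than $H_{\bm V}^2h^{-1/2}+1$ whenever $H_{\bm V}$ tends to zero more slowly than $h^{1/2}$ (for instance $H_{\bm V}\sim h^{1/4}$ gives $h^{-1/4}$ versus $O(1)$). The paper instead makes the $h$-dependent choice $\epsilon = CC_{tr}H_{\bm V}$, which simultaneously forces the coefficient of $\norm{\bm\varepsilon_{\bm V}}_{D_h}$ to equal $1/2$ (so the absorption requires no mesh-size restriction at all) and converts $h^{-1/2}\epsilon H_{\bm V}$ into $H_{\bm V}^2h^{-1/2}$. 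With these two corrections your argument coincides with the one in the paper.
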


\begin{proof}
    From Lemma \ref{lemma:identity_EhV_1}, we find
    \begin{align*}
    \nonumber
        \mathbb{T}_{\bm V, h}
        =& \langle \bm g^{D}-\bm g_h^D,\bm\gamma\bm n_h\rangle_{\Gamma_h^D}
        - \langle(G-G_h)\bm n\circ\bm\phi,\bm u\rangle_{\Gamma_h^N}\\
        \leq& \norm{l^{-1/2}(\bm g^D -\bm g_h^D)}_{\Gamma_h^D} \norm{l^{1/2}\boldsymbol{\gamma} \boldsymbol{n}_h}_{\Gamma_h^D}
        + \norm{ (G - G_h) \boldsymbol{n} \circ \boldsymbol{\phi}}_{\Gamma_h^N} \norm{ \boldsymbol{u}}_{\Gamma_h^N}.
    \end{align*}
Adding and subtracting $\boldsymbol{\Pi}_{\mathbb{Z}} \boldsymbol{\gamma}$,  using \eqref{eq:I_r-Gamma-estimate} and recalling  that $l({\bm x}):=|\bm x - \bar{\bm x}|\lesssim h^{n+\delta}$, we obtain
        \begin{align}
        \mathbb{T}_{\bm V, h}
        \leq& \norm{l^{-1/2}(\bm g^D -\bm g_h^D)}_{\Gamma_h^D} \norm{l^{1/2}(\boldsymbol{\Pi}_{\mathbb{Z}} \boldsymbol{\gamma}-\boldsymbol{\gamma}) }_{\Gamma_h^D}
        +\norm{l^{-1/2}(\bm g^D -\bm g_h^D)}_{\Gamma_h^D} \norm{l^{1/2}\boldsymbol{\Pi}_{\mathbb{Z}} \boldsymbol{\gamma}}_{\Gamma_h^D}\nonumber\\
        &+
         \norm{(G - G_h) \boldsymbol{n} \circ \boldsymbol{\phi}}_{\Gamma_h^N} \norm{\boldsymbol{u}}_{\Gamma_h^N}\nonumber\\
         \lesssim& h^{n/2+\delta/2+s-1/2}\norm{l^{-1/2}(\bm g^D -\bm g_h^D)}_{\Gamma_h^D} \|\boldsymbol{U}\|_{D_h}
        +h^{n/2+\delta/2-1/2}\norm{l^{-1/2}(\bm g^D -\bm g_h^D)}_{\Gamma_h^D} \|\boldsymbol{U}\|_{D_h}\nonumber \\
        &+
         \norm{(G - G_h) \boldsymbol{n} \circ \boldsymbol{\phi}}_{\Gamma_h^N} \|\boldsymbol{U}\|_{D_h}\nonumber\\
          \lesssim& h^{n/2+\delta/2-1/2}\norm{l^{-1/2}(\bm g^D -\bm g_h^D)}_{\Gamma_h^D} \|\boldsymbol{U}\|_{D_h}+
         \norm{(G - G_h) \boldsymbol{n} \circ \boldsymbol{\phi}}_{\Gamma_h^N} \|\boldsymbol{U}\|_{D_h}\label{eq:eq:bound_E_h^V-aux1}.
    \end{align}
    On the other hand, mimicking \cite[Step 4 of the proof of Lemma 3.5]{CoQiuSo2014}, we deduce
    \begin{align}\label{eq:eq:bound_E_h^V-aux2}
    (\boldsymbol{I}_{\boldsymbol{\sigma}},\boldsymbol{\Pi}_{\mathbb{Z} }\boldsymbol{\gamma})_{D_h}
        + (\boldsymbol{\varepsilon}_{\boldsymbol{\sigma}}, \boldsymbol{\Pi}_{\mathbb{Z}}\boldsymbol{\gamma} - \boldsymbol{\gamma} )_{D_h}
        \lesssim h^s (\norm{\boldsymbol{I}_{\boldsymbol{\sigma}}}_{D_h}
        + \norm{\boldsymbol{\varepsilon}_{\boldsymbol{\sigma}}}_{D_h}) \norm{\boldsymbol{U}}_{D_h}.
    \end{align}
    Combining \eqref{eq:eq:bound_E_h^V-aux1} and 
    \eqref{eq:eq:bound_E_h^V-aux2} with \eqref{eq:identity_EhV_1}, we get
    \begin{align*}
    \nonumber
        (\boldsymbol{\varepsilon}_{\boldsymbol{V}}, \boldsymbol{U})_{D_h}
        \lesssim& h^s (\norm{\boldsymbol{I}_{\boldsymbol{\sigma}}}_{D_h}
        + \norm{\boldsymbol{\varepsilon}_{\boldsymbol{\sigma}}}_{D_h}) \norm{\boldsymbol{U}}_{D_h}\\
        &+h^{n/2+\delta/2-1/2}\norm{l^{-1/2}(\bm g^D -\bm g_h^D)}_{\Gamma_h^D} \|\boldsymbol{U}\|_{D_h}+
         \norm{(G - G_h) \boldsymbol{n} \circ \boldsymbol{\phi}}_{\Gamma_h^N} \|\boldsymbol{U}\|_{D_h}.
    \end{align*}
   Then,
   \begin{align*}
(\boldsymbol{\varepsilon}_{\boldsymbol{V}}, \boldsymbol{U})_{D_h}
        \lesssim& (h^s+h^{n/2+\delta/2-1/2}) (\norm{\boldsymbol{I}_{\boldsymbol{\sigma}}}_{D_h}
        + \norm{\boldsymbol{\varepsilon}_{\boldsymbol{\sigma}}}_{D_h}+\norm{l^{-1/2}(\bm g^D -\bm g_h^D)}_{\Gamma_h^D} ) \norm{\boldsymbol{U}}_{D_h}\\
        &+
         \norm{(G - G_h) \boldsymbol{n} \circ \boldsymbol{\phi}}_{\Gamma_h^N} \|\boldsymbol{U}\|_{D_h}.
    \end{align*}
    Setting $\boldsymbol{U} = \boldsymbol{\varepsilon}_{\boldsymbol{V}}$ in 
    $D_h$, using \eqref{eq:key_ineq_V} and defining $ H_{\bm V}:=h^s+h^{n/2+\delta/2-1/2}$.
    \begin{align*}
        \norm{\boldsymbol{\varepsilon}_{\boldsymbol{V}}}_{D_h}
        \lesssim& H_{\bm V}\norm{\boldsymbol{I}_{\boldsymbol{\sigma}}}_{D_h}
        + R^{1/2} H_{\bm V}\norm{\boldsymbol{I}_{\boldsymbol{\sigma}} \boldsymbol{n}_h}_{\Gamma_h^D, h^{\perp}} +  H_{\bm V} \left\|\left(h^{-1/2}\tfrac{\epsilon}{2}+\tau^{-1/2}\right)(G-G_h)\bm n\circ\bm\phi\right\|_{\Gamma_h^N}\\
        &
        + H_{\bm V} R  \norm{\partial_{\boldsymbol{n}_h} (\boldsymbol{I}_{\boldsymbol{\sigma}} \boldsymbol{n}_h)}_{D_h^c, (h^{\perp})^2}+ H_{\bm V}\|\tfrac{1}{2\epsilon}C_{tr}\boldsymbol{\varepsilon}_{\bm V} \|_{D_h}+
         \norm{(G - G_h) \boldsymbol{n} \circ \boldsymbol{\phi}}_{\Gamma_h^N} .
    \end{align*}
    In addition, considering that $R \lesssim h^{n+\delta}$ (cf. 
    \eqref{eq:localProximity}), we deduce that there exists a constant $C>0$, independent of $h$, such that
    \begin{align*}
        \norm{\boldsymbol{\varepsilon}_{\boldsymbol{V}}}_{D_h}
         \leq& C\bigg( H_{\bm V} \norm{\boldsymbol{I}_{\boldsymbol{\sigma}}}_{D_h}
        + h^{n/2+\delta/2}H_{\bm V}\norm{\boldsymbol{I}_{\boldsymbol{\sigma}} \boldsymbol{n}_h}_{\Gamma_h^D, h^{\perp}}+  H_{\bm V} \left\|\left(h^{-1/2}\tfrac{\epsilon}{2}+\tau^{-1/2}\right)(G-G_h)\bm n\circ\bm\phi\right\|_{\Gamma_h^N} \\
        &
        + h^{n+\delta}H_{\bm V} \norm{\partial_{\boldsymbol{n}_h} (\boldsymbol{I}_{\boldsymbol{\sigma}} \boldsymbol{n}_h)}_{D_h^c, (h^{\perp})^2}+ H_{\bm V} \|\tfrac{1}{2\epsilon}C_{tr}\boldsymbol{\varepsilon}_{\bm V} \|_{D_h}+
         \norm{(G - G_h) \boldsymbol{n} \circ \boldsymbol{\phi}}_{\Gamma_h^N} \bigg).
    \end{align*}
Finally, choosing $\epsilon=CC_{tr} H_{\bm V} $, we have
\begin{align*}
        \norm{\boldsymbol{\varepsilon}_{\boldsymbol{V}}}_{D_h}
        & \lesssim H_{\bm V} \norm{\boldsymbol{I}_{\boldsymbol{\sigma}}}_{D_h}
        + h^{n/2+\delta/2}H_{\bm V}\norm{\boldsymbol{I}_{\boldsymbol{\sigma}} \boldsymbol{n}_h}_{\Gamma_h^D, h^{\perp}} +  H_{\bm V}(H_{\bm V}h^{-1/2}+1) \|(G-G_h)\bm n\circ\bm\phi\|_{\Gamma_h^N} \\
        &\quad  +  \norm{(G - G_h) \boldsymbol{n} \circ \boldsymbol{\phi}}_{\Gamma_h^N}
        + h^{n+\delta}H_{\bm V}\norm{\partial_{\boldsymbol{n}_h} (\boldsymbol{I}_{\boldsymbol{\sigma}} \boldsymbol{n}_h)}_{D_h^c, (h^{\perp})^2}\,,
    \end{align*}
and  
\eqref{eq:bound_E_h^V}  after noticing that $s\geq0$ and $n/2+\delta/2-1/2\geq0$.
\end{proof}

From the above lemma we can state the following theorem which gives us the 
convergence rates of the approximations.
\begin{theorem}
\label{thm:conv_rate_E_h^V}
Let us assume that ${\bm V}\in [H^{k+1}(\Omega)]^d$ and ${\bm \sigma}\in [H^{k+1}(\Omega)]^{d\times d}$. If the local proximity condition \eqref{eq:localProximity} is satisfied for $n\geq 1$ and $0 < \delta <1$ and $ s \geq 0 $ in \eqref{eliptic_reg^DP_V}, then
\begin{subequations}
\begin{alignat}{6}\norm{\boldsymbol{\varepsilon}_{\boldsymbol{V}}}_{D_h} 
    \lesssim&\,  h^{k + 1}H_{\bm V} + C_G h^{k+1/2}( H_{\bm V}^2h^{-1/2}+1),\label{eq:conv_rate_E_h^V-a1}\\[2ex]
\norm{\boldsymbol{\varepsilon}_{\boldsymbol{\sigma}}}_{D_h} 
    \lesssim&\, h^{k+1}  +C_G h^{k}+\norm{\boldsymbol{\varepsilon}_{\boldsymbol{V}}}_{D_h},\label{eq:conv_rate_E_h^V-a2}
\end{alignat}

and 
\begin{equation}\label{eq:conv_rate_E_h^V-b}
    \norm{\boldsymbol{V} - \boldsymbol{V}_h}_{\Omega} 
    \lesssim h^{k+1} +\norm{\boldsymbol{\varepsilon}_{\boldsymbol{V}}}_{D_h} , \quad 
    \norm{\boldsymbol{\sigma} - \boldsymbol{\sigma}_h}_{\Omega} 
    \lesssim h^{k+1} + C_G h^k+\norm{\boldsymbol{\varepsilon}_{\boldsymbol{V}}}_{D_h}.
\end{equation}
Here, we recall that where $ H_{\bm V}:=h^s+h^{n/2+\delta/2-1/2}$.
\end{subequations}
\end{theorem}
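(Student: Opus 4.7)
The plan is to assemble the estimates \eqref{eq:conv_rate_E_h^V-a1}--\eqref{eq:conv_rate_E_h^V-b} by combining the machinery developed in the preceding lemmas---namely Lemma \ref{lem:key_ineq_V}, Lemma \ref{lemma:rate_normGs}, and Lemma \ref{lemma:bound_E_h^V}---together with the state/adjoint rates from Corollary \ref{corollary:estimates_p_lesssim} and standard HDG projection error estimates. Throughout, the regularity hypotheses $\boldsymbol V \in [H^{k+1}(\Omega)]^d$ and $\boldsymbol\sigma\in[H^{k+1}(\Omega)]^{d\times d}$ guarantee that $\|I_{\bm V}\|_{\Omega}, \|\boldsymbol I_{\bm\sigma}\|_{D_h}\lesssim h^{k+1}$, and the trace-type norms $\|\boldsymbol I_{\bm\sigma}\boldsymbol n_h\|_{\Gamma_h^D,h^\perp}$ and $\|\partial_{\boldsymbol n_h}(\boldsymbol I_{\bm\sigma}\boldsymbol n_h)\|_{D_h^c,(h^\perp)^2}$ behave like $h^{k+1}$ as well (using the extension estimates already invoked in the proof of Lemma \ref{lemma:rate_normGs}).

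First I would handle the control of $\|(G-G_h)\boldsymbol n\circ\boldsymbol\phi\|_{\Gamma_h^N}$, since this term appears in both \eqref{eq:bound_E_h^V} and (via Lemma \ref{lem:key_ineq_V}) in the bound for $\boldsymbol\varepsilon_{\bm\sigma}$. Combining \eqref{ineq:GGh} with \eqref{eq:rate_normGs-2}, using Corollary \ref{corollary:estimates_p_lesssim} to bound $\|\boldsymbol\varepsilon_{\bm p}\|_{D_h}$ and $\|\boldsymbol\varepsilon_{\bm r}\|_{D_h}$ by $h^{k+1}$, and using that $R\lesssim h^{n+\delta}$ is of higher order, yields
\[
\|(G-G_h)\boldsymbol n\circ\boldsymbol\phi\|_{\Gamma_h^N} \lesssim C_G\,h^{k+1/2},
\]
where $C_G$ absorbs the norms of $\boldsymbol p$ and $\boldsymbol r$. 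Substituting this estimate into \eqref{eq:bound_E_h^V} and bounding the projection error terms by $h^{k+1}$ gives \eqref{eq:conv_rate_E_h^V-a1} directly: the term $H_{\bm V}\|\boldsymbol I_{\bm\sigma}\|_{D_h}$ contributes $h^{k+1}H_{\bm V}$, the boundary terms are of the same or higher order (using $H_{\bm V}^2 h^{-1/2}$ as a multiplier when needed), and the factor $H_{\bm V}(H_{\bm V}h^{-1/2}+1)$ times $C_G h^{k+1/2}$ produces the second summand in \eqref{eq:conv_rate_E_h^V-a1}.

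Next, for \eqref{eq:conv_rate_E_h^V-a2}, I would apply Lemma \ref{lem:key_ineq_V}: on the right-hand side of \eqref{eq:key_ineq_V}, the first three interpolation terms are $\lesssim h^{k+1}$, the $(G-G_h)$ term contributes $C_G h^k$ (through the $h^{-1/2}$ prefactor), and $\|\partial_{\boldsymbol n_h}(\boldsymbol I_{\bm\sigma}\boldsymbol n_h)\|_{D_h^c,(h^\perp)^2}$ is dominated accordingly. The residual summand $\|\tfrac{1}{2\epsilon}C_{tr}\boldsymbol\varepsilon_{\bm V}\|_{D_h}$, with $\epsilon$ fixed as an $O(1)$ constant at this stage (since we are not trying to absorb $\|\boldsymbol\varepsilon_{\bm V}\|_{D_h}$ into the left-hand side here), is left explicit, producing the stated $\|\boldsymbol\varepsilon_{\bm V}\|_{D_h}$ contribution in \eqref{eq:conv_rate_E_h^V-a2}.

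Finally, the global estimates \eqref{eq:conv_rate_E_h^V-b} follow from the triangle inequality $\|\boldsymbol V-\boldsymbol V_h\|_\Omega \leq \|I_{\bm V}\|_\Omega + \|\boldsymbol\varepsilon_{\bm V}\|_{D_h} + \|\boldsymbol V_h - \Pi_{\bm W}\boldsymbol V\|_{\Omega\setminus D_h}$, where the component on $\Omega\setminus D_h$ is handled via the polynomial extrapolation $E_h$ together with the extension estimates, costing only a multiplicative constant and lower-order terms; the same applies mutatis mutandis to $\|\boldsymbol\sigma-\boldsymbol\sigma_h\|_\Omega$, which inherits the $C_G h^k$ contribution from \eqref{eq:conv_rate_E_h^V-a2}. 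The main technical obstacle is the bookkeeping in the second paragraph: one must verify that after inserting the $C_G h^{k+1/2}$ bound for $\|(G-G_h)\boldsymbol n\circ\boldsymbol\phi\|_{\Gamma_h^N}$ into \eqref{eq:bound_E_h^V}, the interplay of $H_{\bm V}$, $h^{-1/2}$, and the factor $(H_{\bm V}^2 h^{-1/2}+1)$ really collapses to the clean form stated in \eqref{eq:conv_rate_E_h^V-a1} rather than a more cluttered expression, and that the choice of $\epsilon$ remains consistent with the (free) choice made in Lemma \ref{lemma:bound_E_h^V}.
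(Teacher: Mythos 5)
Your proposal follows essentially the same route as the paper: bound $\|(G-G_h)\bm n\circ\bm\phi\|_{\Gamma_h^N}\lesssim C_G h^{k+1/2}$ via Lemma \ref{lemma:rate_normGs} and Corollary \ref{corollary:estimates_p_lesssim}, feed this and the $h^{k+1}$ projection-error bounds into \eqref{eq:bound_E_h^V} for \eqref{eq:conv_rate_E_h^V-a1}, take $\epsilon=\mathcal O(1)$ in \eqref{eq:key_ineq_V} for \eqref{eq:conv_rate_E_h^V-a2}, and split $\Omega$ into $D_h\cup D_h^c$ with the extension lemmas for \eqref{eq:conv_rate_E_h^V-b}. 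The consistency worry you raise about $\epsilon$ is moot, since $\epsilon$ is a free parameter that may be chosen differently in each invocation of Lemma \ref{lem:key_ineq_V} (the paper takes $\epsilon = CC_{tr}H_{\bm V}$ inside Lemma \ref{lemma:bound_E_h^V} and $\epsilon=1$ for the $\boldsymbol\varepsilon_{\boldsymbol\sigma}$ bound).
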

\begin{proof}
    By \eqref{theorem:proyHDG_a}, \cite[Lemma 3.8]{CoQiuSo2014} and \eqref{eq:I_r-Gamma-estimate}, we note that

\[
H_{\bm V}\norm{\boldsymbol{I}_{\bm\sigma}}_{D_h} 
        + h^{n+\delta}H_{\bm V}\norm{\partial_{\bm n_h}(\boldsymbol{I}_{\bm \sigma} \bm n_h)}_{D_h^{c},(h^{\perp})^2}
        + h^{n/2+\delta/2}H_{\bm V}  \norm{\boldsymbol{I}_{\bm\sigma} \bm n_{h}}_{\Gamma_h^{D},h^{\perp}}\lesssim H_{\bm V} h^{k+1}. 
\]

    In turn, by Lemma \ref{lemma:rate_normGs}, we can deduce that
    \[
    ( H_{\bm V}^2h^{-1/2}+1)  \norm{(G -G_h)\boldsymbol{n} \circ \boldsymbol{\phi}}_{\Gamma_h^N} \lesssim C_G h^{k+1/2}( H_{\bm V}^2h^{-1/2}+1) .
    \]
    Therefore, combining all the above estimates, we prove \eqref{eq:conv_rate_E_h^V-a1}. On the other hand, choosing $\epsilon=1$ in \eqref{eq:key_ineq_V}, we can deduce

    \begin{align*}
        \norm{\boldsymbol{\varepsilon}_{\boldsymbol{\sigma}}}_{D_h}
        \lesssim& 
        \norm{\boldsymbol{I}_{\boldsymbol{\sigma}}}_{D_h} 
        + h^{n/2+\delta/2} \norm{\boldsymbol{I}_{\boldsymbol{\sigma}}}_{\Gamma_h^D, h^{\perp}}
        + \norm{h^{-1/2}(G -G_h)\boldsymbol{n}\circ \boldsymbol{\phi}}_{\Gamma_h^N}\\
        &+ h^{n+\delta} \norm{\partial_{\boldsymbol{n}_h}(\boldsymbol{I}_{\boldsymbol{\sigma}}\boldsymbol{n}_h)}_{D_h^c, (h^{\perp})^2} 
        + \norm{\boldsymbol{\varepsilon}_{\boldsymbol{V}}}_{D_h}.
    \end{align*}
    Then, by following steps analogous to those used to derive \eqref{eq:conv_rate_E_h^V-a1}, combining them with the estimate in Lemma \ref{lemma:rate_normGs} and bounding $\norm{\boldsymbol{\varepsilon}_{\boldsymbol{V}}}_{D_h}$ by \eqref{eq:conv_rate_E_h^V-a1}, we obtain \eqref{eq:conv_rate_E_h^V-a2}. Now, note that
    \begin{align*}
        \norm{\boldsymbol{V} - \boldsymbol{V}_h}_{\Omega} 
        \leq 
        \norm{\boldsymbol{V} - \boldsymbol{V}_h}_{D_h}
        + \norm{\boldsymbol{V} - \boldsymbol{V}_h}_{D_h^c} 
        \leq \norm{\boldsymbol{I}_{\boldsymbol{V}}}_{D_h}
        + \norm{\boldsymbol{\varepsilon}_{\boldsymbol{V}}}_{D_h}
        + \norm{\boldsymbol{V} - \boldsymbol{V}_h}_{D_h^c}\,.
    \end{align*}
    Then, applying \cite[Lemma 3.7]{CoQiuSo2014}, we get
    \begin{equation*}
        \norm{\boldsymbol{V} - \boldsymbol{V}_h}_{\Omega} 
        \leq 
        \norm{\boldsymbol{I}_{\boldsymbol{V}}}_{D_h}
        + \norm{\boldsymbol{\varepsilon}_{\boldsymbol{V}}}_{D_h}
        + h\norm{\boldsymbol{I}_{\boldsymbol{\sigma}}}_{D_h^c} 
        + h^{\delta/2+1} \norm{\boldsymbol{\varepsilon}_{\boldsymbol{
        V}}}_{D_h},
    \end{equation*}
    which implies the first estimate of \eqref{eq:conv_rate_E_h^V-b}.

    From \cite[Lemma 3.8]{CoQiuSo2014} we have that $\norm{\boldsymbol{I}_{\boldsymbol{\sigma}}}_{D_h^c} \lesssim h^{k+1}$. In turn, 
    By \eqref{theorem:proyHDG_a} and \eqref{eq:conv_rate_E_h^V-a1}, we find that the first estimate of  \eqref{eq:conv_rate_E_h^V-b} holds. On the other hand, analogue as above, we have 
    \begin{equation*}
        \norm{\boldsymbol{\sigma} - \boldsymbol{\sigma}_h}_{\Omega}
        \leq \norm{\boldsymbol{I}_{\boldsymbol{\sigma}}}_{D_h}
        + \norm{\boldsymbol{\varepsilon}_{\boldsymbol{\sigma}}}_{D_h}
        + \norm{\boldsymbol{\sigma} - \boldsymbol{\sigma}_h}_{D_h^c}\,.
    \end{equation*}
    By \cite[Lemma 3.7]{CoQiuSo2014}, we get
    \begin{equation*}
        \norm{\boldsymbol{\sigma} - \boldsymbol{\sigma}_h}_{\Omega}
        \leq 
        \norm{\boldsymbol{I}_{\boldsymbol{\sigma}}}_{D_h}
        + \norm{\boldsymbol{\varepsilon}_{\boldsymbol{\sigma}}}_{D_h}
        + \norm{\boldsymbol{I}_{\boldsymbol{\sigma}}}_{D_h^c}
        + h^{\delta/2} \norm{\boldsymbol{\varepsilon}_{\boldsymbol{\sigma}}}_{D_h}\,.
    \end{equation*}
    Finally, applying \eqref{theorem:proyHDG_a}, \eqref{eq:conv_rate_E_h^V-a2}, and \cite[Lemma 3.8]{CoQiuSo2014}, then the second estimate of \eqref{eq:conv_rate_E_h^V-b} holds.
\end{proof}

\section{Numerical experiments}\label{sec:numerical_experiments}

We consider two numerical examples in two dimensions and set the stabilization parameter equal to one in all of them. The first one consists of a manufactured solution in a fixed domain, with the aim to compute errors and order of convergence of the schemes, while the second is intended to observe how a domain evolves when minimizing the energy functional. 

\subsection{Computational grids}
The domain $\mathcal{M}$ is triangulated by a family (background triangulation) $\{T_h\}_{h>0}$ that satisfies the first three properties in the definition of {\it admissible triangulation} from Section \ref{sec:Geometry}. This ensures that the computational grids $\{\mathcal{T}_h\}_{h>0}$ obtained through \eqref{eq:triangulation} also satisfy these three conditions, as we can observe in Fig. \ref{fig:optimal-shape}. Regarding the fourth condition, computations require only that the mapping \eqref{eq:Phi} is bijective at the vertices and the quadrature points of boundary edges. This---along with condition 4(a)---can be guaranteed by constructing the transfer paths according to the algorithm in \cite[Section 2.4]{CoSo2012}. Said algorithm also provides the local proximity condition \eqref{eq:localProximity} with $n=1$ and $\delta=0$. In practice, numerical experiments show that the method performs optimally by picking the computational mesh from a family of background triangulations satisfying only these conditions.

\subsection{Experiment 1: Convergence of the HDG scheme}
In order to verify the orders of convergence of the HDG solvers, we consider a fixed domain \[
\Omega:=\{x=(x_1,x_2)\in \mathbb{R}^2: (0.05)^2\le |\boldsymbol{x}|^2<(0.2)^2\},
\]
the target function $\widetilde{y}(x_1,x_2)=-\sin(x_1)\sin(x_2)$ and $a=1$. The data $f$ and $g$, and a non-homogeneous boundary condition in \eqref{adjoint_eq_mp} are chosen such that $y(x_1,x_2)=\sin(x_1)\sin(x_2)$ is the solution to \eqref{pde_1} and $z(x_1,x_2)=\sin(x_1)\sin(x_2)$ is the solution to \eqref{adjoint_eq_mp}, leading to $G(\Gamma)=0$. Then, to test the HDG approximation to the deformation field, we consider artificial right-hand sides in \eqref{pde_V} so that $\boldsymbol{V}(x_1,x_2)=e^{|\boldsymbol{x}|^2-0.05^2}(1,1)^t$ is the solution to \eqref{pde_V}.

Numerical experiments for polynomial bases of degrees $k=1,2,3$ were performed and the convergence history of the scheme, as a function of the number of mesh elements $N$, is shown in Figure \ref{fig:StateConvergence}. As a reference, the black dashed line indicates the slope $h^{k+1}$, which coincides with the experimental order of convergence, in the $L^2$--norm, of the volume variables (plotted in blue and red lines in the figure). This behavior is better than the one predicted by Theorem \ref{thm:conv_rate_E_h^V} which, for $n=1$ and $\delta=0$ as in our case, predicts that the order of convergence for all the errors is $h^{k}$. Note that the error in the traces (plotted in yellow) measured with respect to the norm
\[
\vertiii{\cdot}_{h}:=\left(\displaystyle\sum_{K\in D_h} h_K\|\cdot\|_{\partial K}^2\right)^{1/2},
\] 
superconverges with order $h^{k+2}$ as proven for Dirichlet boundary value problems in \cite{CoQiuSo2014,CoSo2012}.

\begin{figure}
\begin{tabular}{cccc} 
& {\large Degree $k=1$} & {\large Degree $k=2$} & {\large Degree $k=3$} \\
\rotatebox[origin=l]{90}{\large \quad State variables} & \includegraphics[width=0.3\linewidth]{./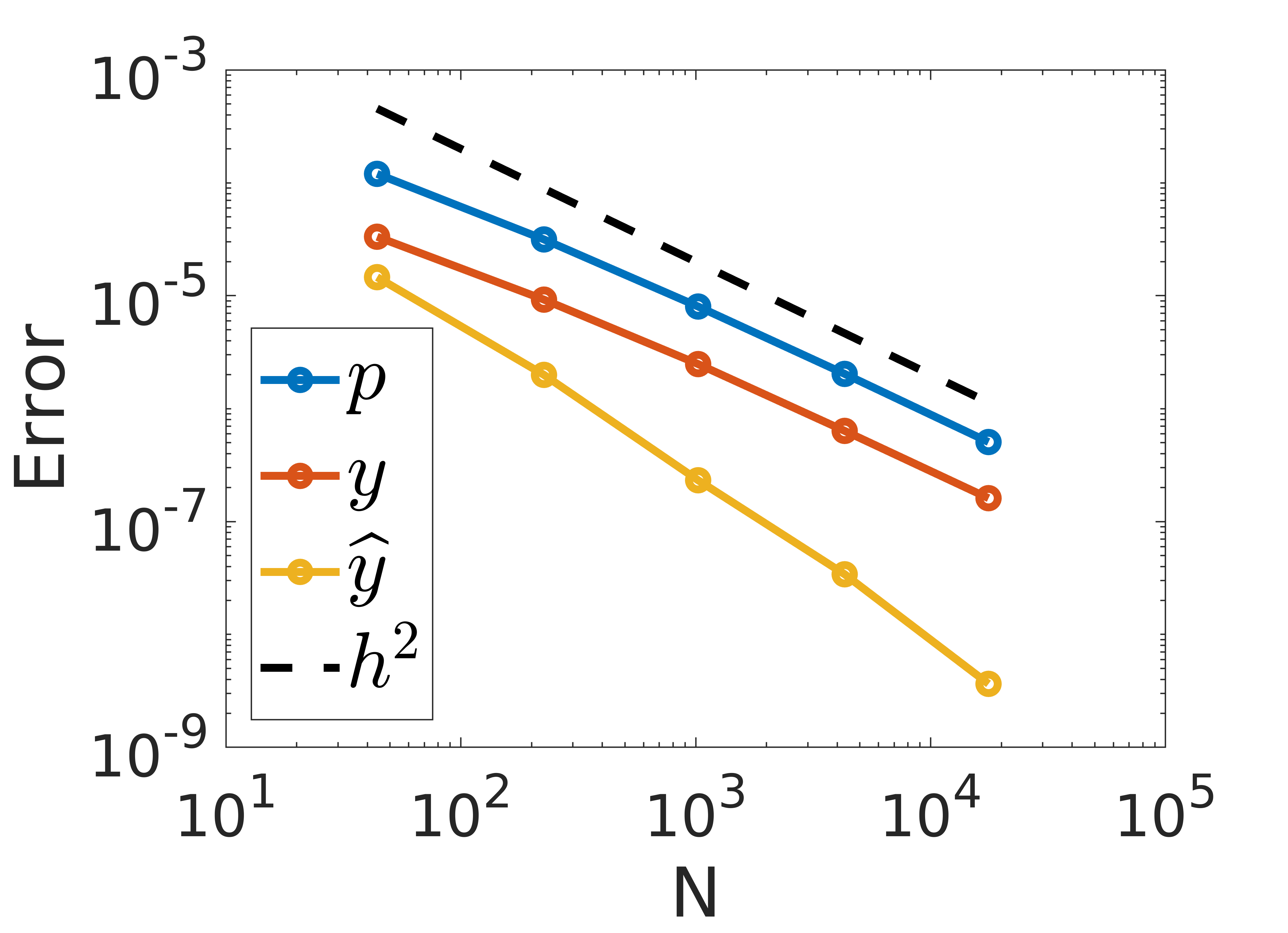} & \includegraphics[width=0.3\linewidth]{./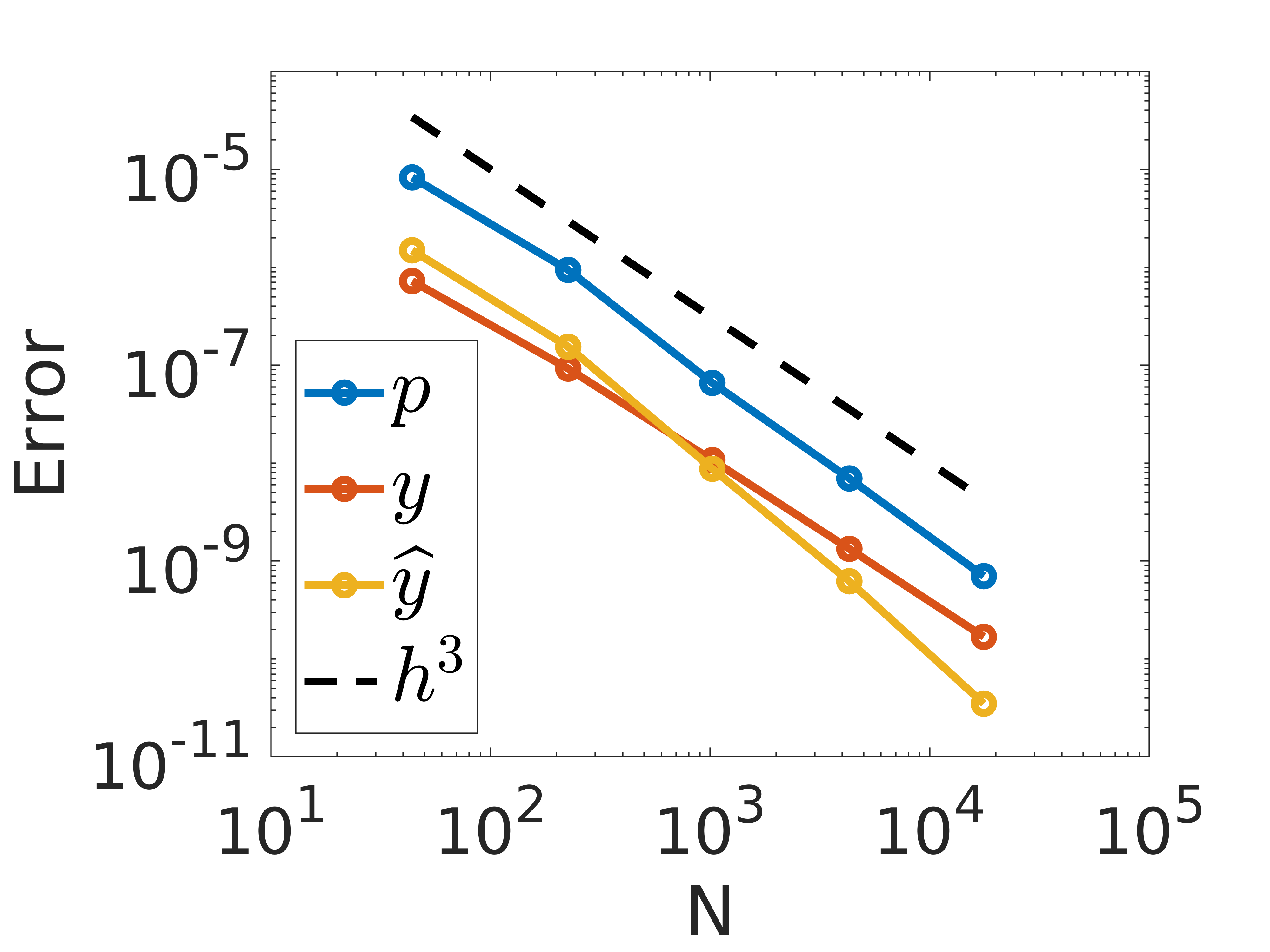} & \includegraphics[width=0.3\linewidth]{./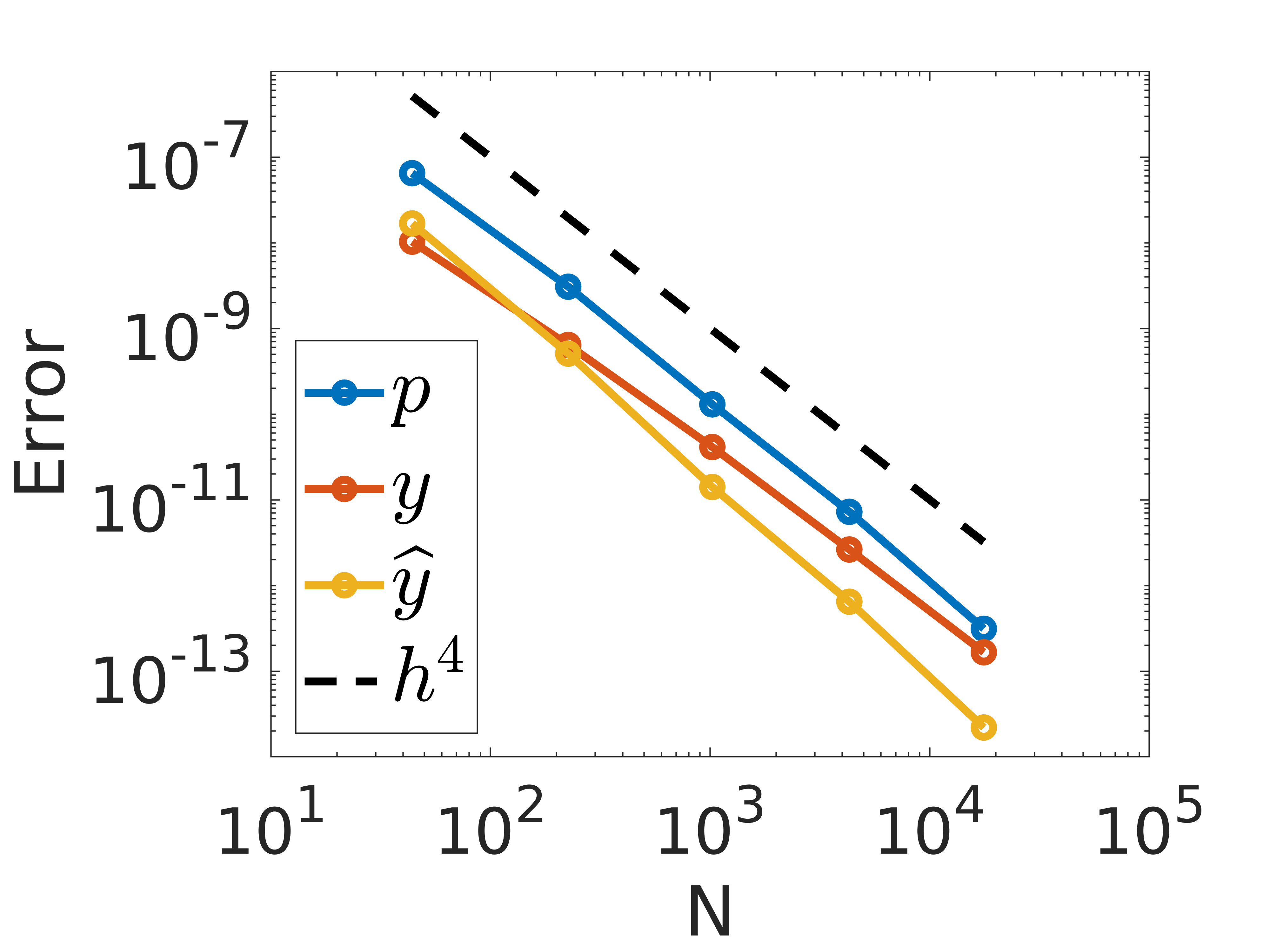} \\
\rotatebox[origin=l]{90}{\large \quad Adjoint variables} & \includegraphics[width=0.3\linewidth]{./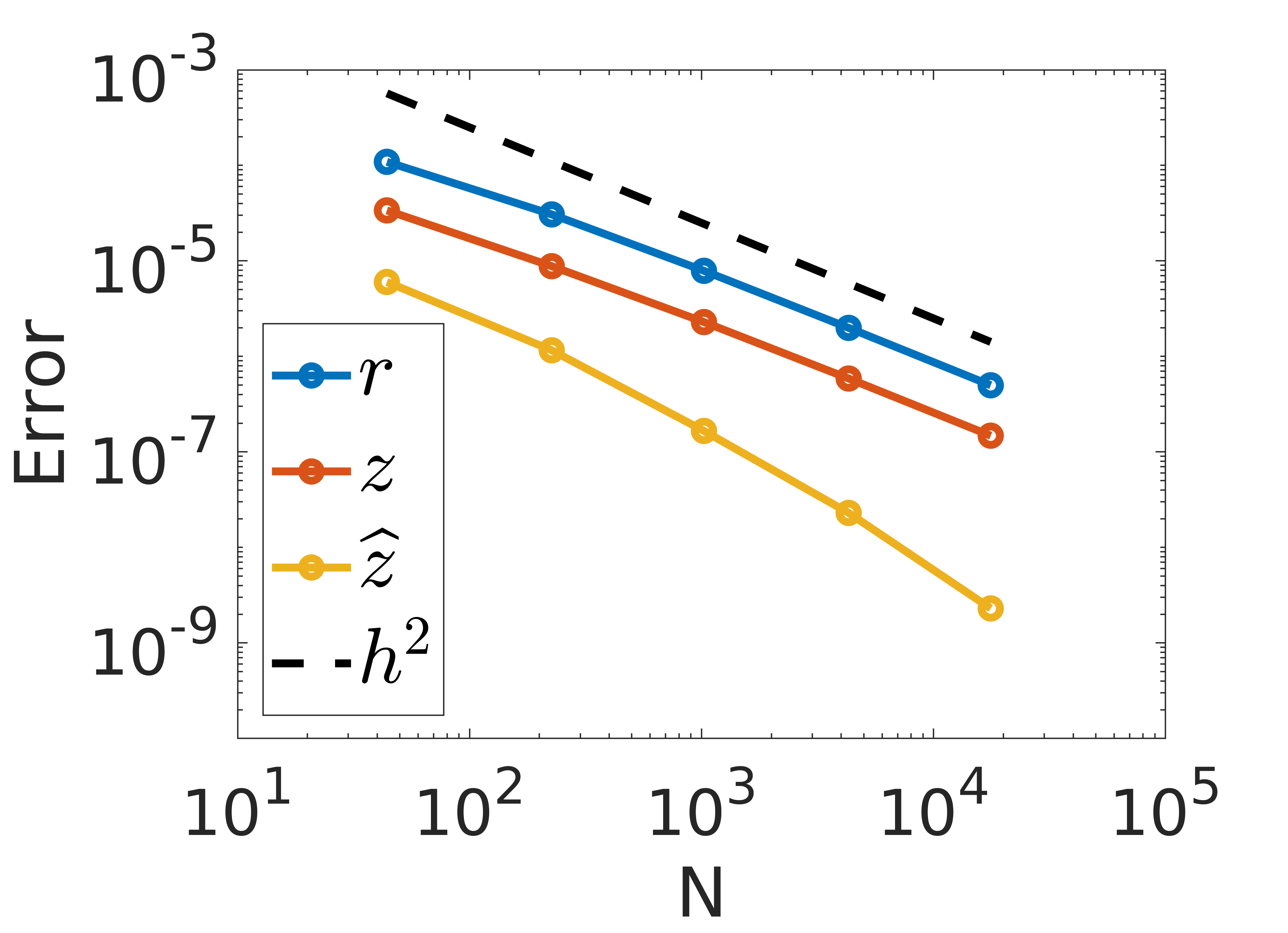} & \includegraphics[width=0.3\linewidth]{./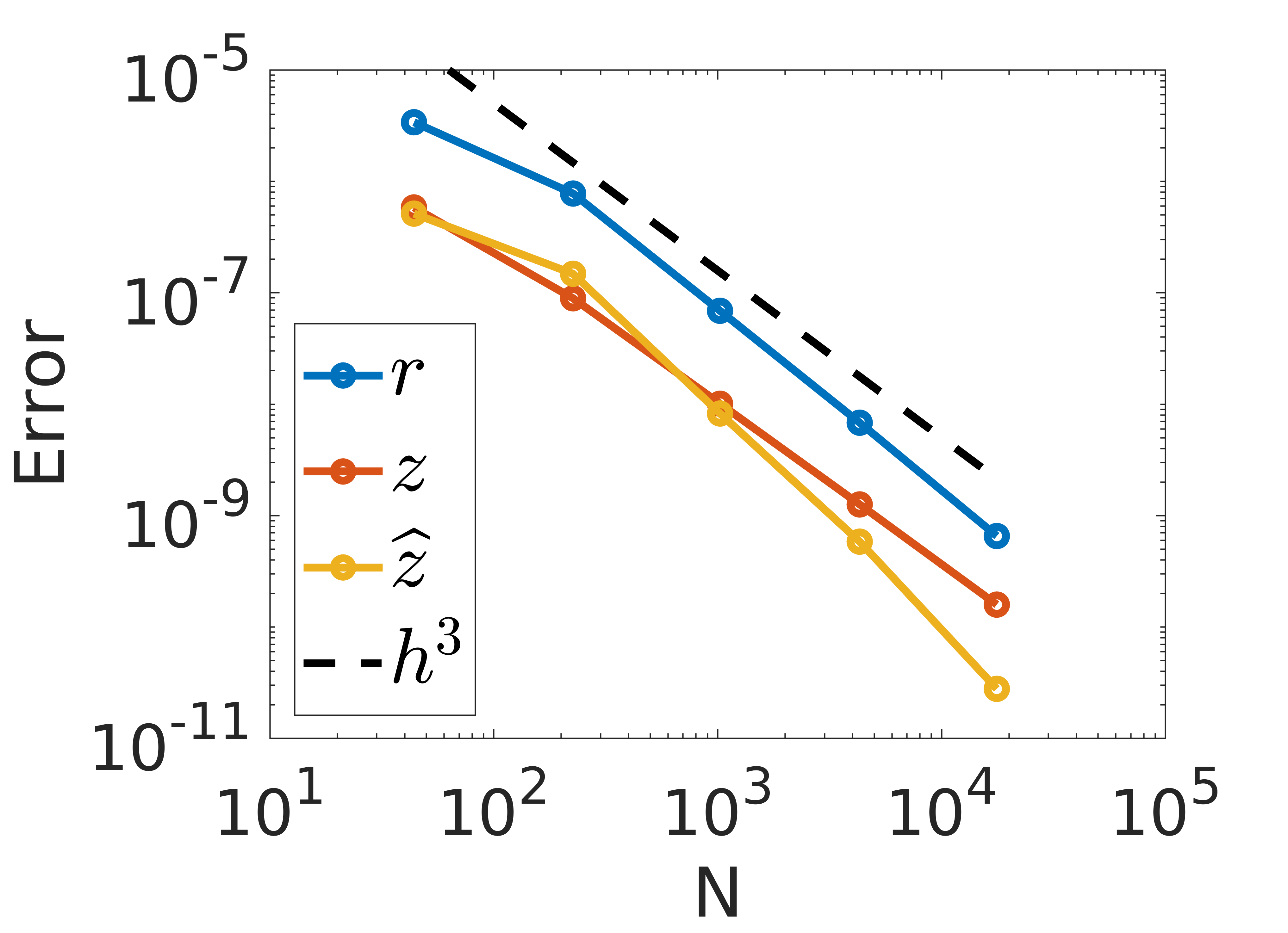} & \includegraphics[width=0.3\linewidth]{./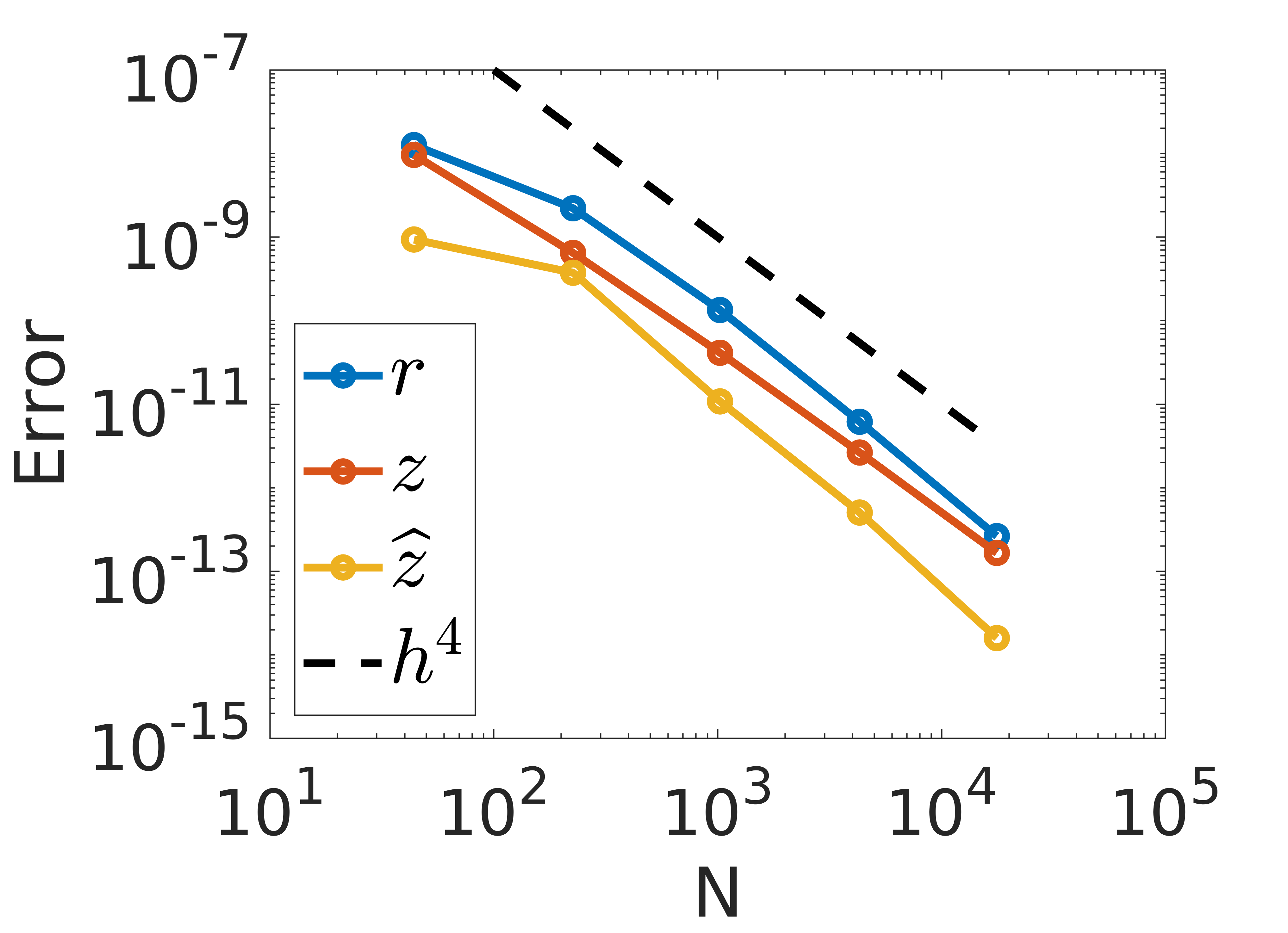} \\
\rotatebox[origin=l]{90}{\large \quad Deformation field} & \includegraphics[width=0.3\linewidth]{./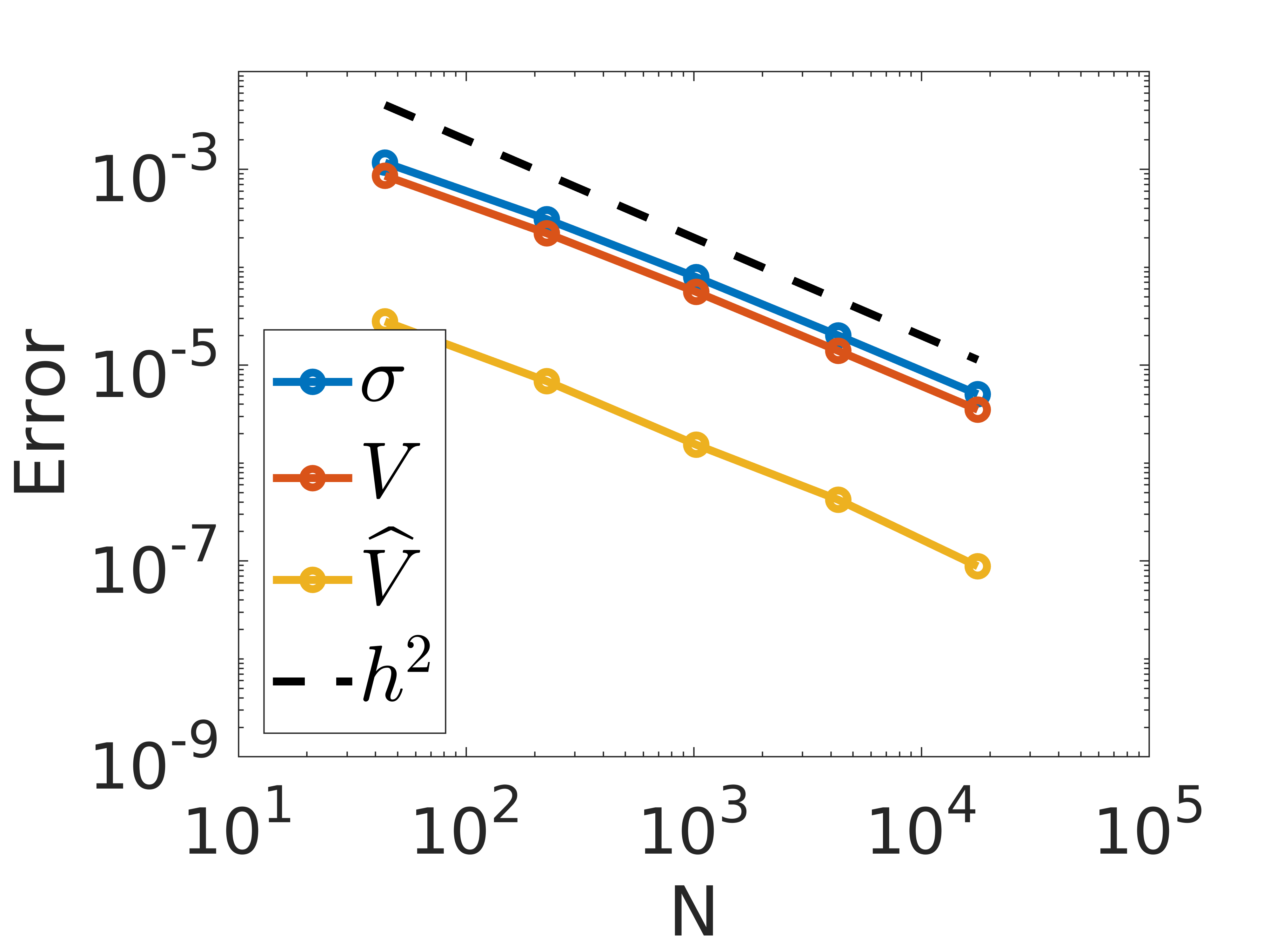} & \includegraphics[width=0.3\linewidth]{./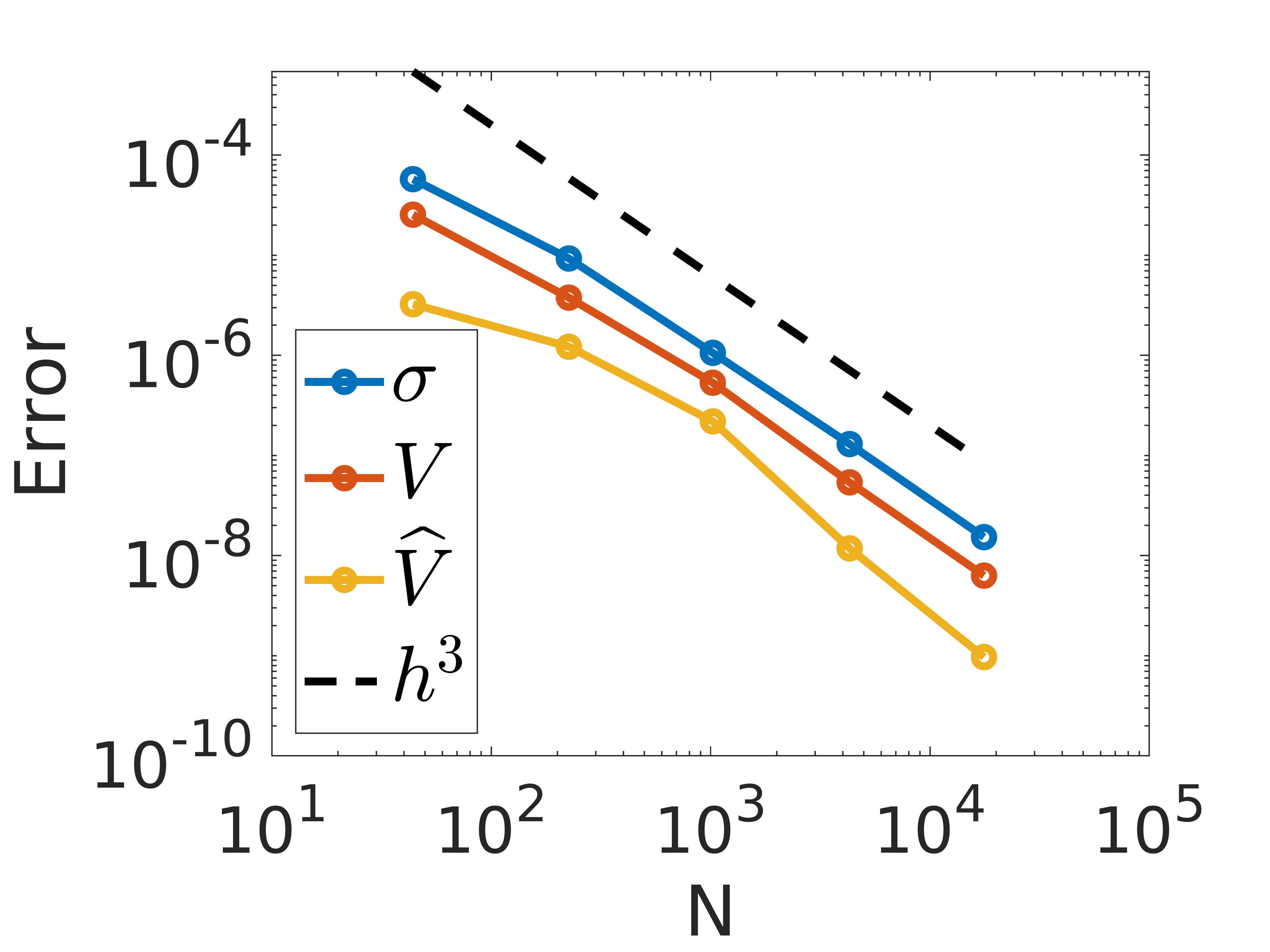} & \includegraphics[width=0.3\linewidth]{./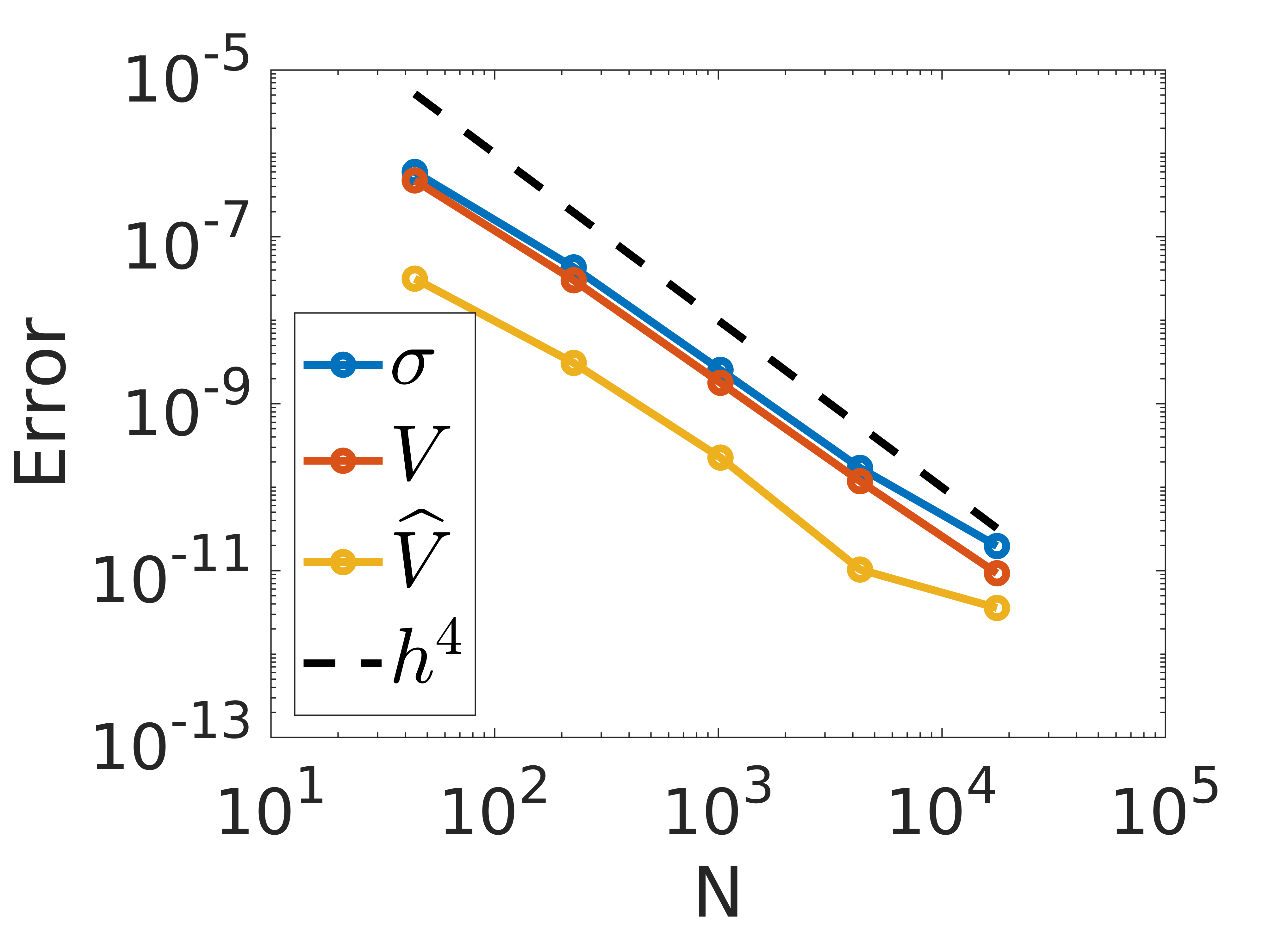} \\
\end{tabular}
\caption{Convergence history for the variables associated to the three systems--- state, adjoint and deformation---for different orders of polynomial bases. The trace variables (denoted with a ``\,hat \;$\widehat\cdot$\;\,'' and plotted in yellow) all converge with an additional power of $h$.}\label{fig:StateConvergence}
\end{figure}

\subsection{Experiment 2: Shape optimization with a manufactured example.}
Consider the target function
\[
\widetilde{y}(x_1,x_2):=\left(|\boldsymbol{x}|^2-\dfrac{1}{2\pi}\right)\left(|\boldsymbol{x}|^2-0.05^2\right),
\]
along with the problem data
\[
f(x_1,x_2)=\left(4|\boldsymbol{x}|^2-\dfrac{1}{2\pi}-0.05^2\right)\,, \qquad  g=0\,, \qquad \text{ and } \qquad a=1.
\]
With these parameters, the solution to the state equation \eqref{pde_1} is $y(x_1,x_2)=\tfrac{1}{4}\,\widetilde{y}(x_1,x_2)$. 

Let $\mathcal U:=\{\boldsymbol{x}=(x_1,x_2)\in (-1,1)^2: |\boldsymbol{x}|>0.05\} $ and consider the set of admissible domains

\[
\mathcal O := \left\{ \Omega: \Omega\subset \mathcal U \;\text{ and }\; \mu(\Omega) = m_0:=\pi\left(\dfrac{1}{2\pi}-0.05^2\right) \; \text{with} \; \{|\boldsymbol x| = 0.05\}\subset \partial \Omega\right\}\,.
\]
In this case, we expect the optimal domain to be
 \[
 \widehat{\Omega}=\left\{\boldsymbol{x}=(x_1,x_2)\in \mathcal{U}: |\boldsymbol{x}|^2<\dfrac{1}{2\pi}\right\}\,,
 \]
and the optimal energy to be given by 
\[
J(\widehat{\Omega};y)=\dfrac{9}{8} \int_{\widehat{\Omega}}\widetilde{y}^2(x_1,x_2) \,\,d(x_1,x_2) = \dfrac{9\pi}{16}\int_{0.05}^{1/\sqrt{2\pi}} r \left(r^2-\dfrac{1}{2\pi}\right)^2 (r^2-0.05^2)^2\,dr\approx 6.83 \times 10^{-8}.
\]
Algorithm \ref{alg2_shp_opt} was employed using the value $\epsilon = 10^{-4}$ (see eq. \eqref{LM:update}) and starting from the initial guess for $\Omega$ displayed in red on the leftmost panel of Figure \ref{fig:optimal-shape}. To enforce the condition that $|\boldsymbol x| = 0.05$ remains a subset of $\partial\Omega$, the initial guess includes this segment in its boundary and the Dirichlet condition $\boldsymbol V = 0$ on $\{|\boldsymbol x| = 0.05\}$ is included in the BVP for the deformation field. The initial guess for the domain $\Omega$ consisted of a sample of $N=2000$ points $p_i$ not located on the fixed inner circle $\{|\boldsymbol x = 0.05|\}$. These points were iteratively displaced according to the deformation field as $p_i \,\mapsto \, p_i + \tau\boldsymbol V(p_i)$, where the value of $\tau$ is updated in every step through an Armijo line search.

This results in a polygonal approximation of the optimal boundary.
As the shape of the approximated domain changes, the computational mesh (displayed in black on Figure \ref{fig:optimal-shape}) is updated. The approximation after 90 iterations is displayed on the rightmost panel in Figure \ref{fig:optimal-shape}. The maximum length of a segment in our final polygonal approximation is $\ell = 1.8\times 10^{-3}$. Using a traditional approach reliant on an interpolatory uniform mesh, where the number of elements is proportional to $h^{-2}$, such a resolution would require approximately $3\times 10^{5}$ elements compared to the 1924 elements in our grid. This dramatic reduction (a factor of approximately $1.5\times10^{2}$) on the number of elements required underscores the efficiency of the proposed technique.

\begin{figure}[tb]
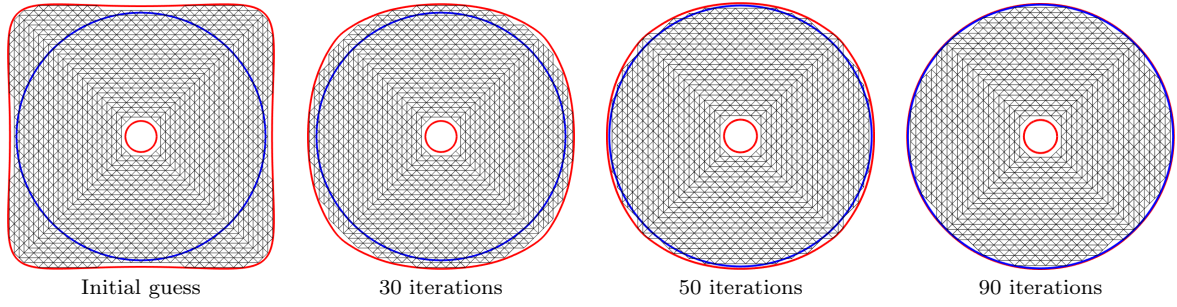

\begin{tabular}{cccc}
\includegraphics[width=0.225\linewidth]{figures/figures_example2/shape_ex8_it0_k1_mesh5.png} &
\includegraphics[width=0.225\linewidth]{figures/figures_example2/shape_ex8_it30_k1_mesh5.png} &
    \includegraphics[width=0.225\linewidth]{figures/figures_example2/shape_ex8_it50_k1_mesh5.png} &
\includegraphics[width=0.225\linewidth]{figures/figures_example2/shape_ex8_it90_k1_mesh5.png} \\
Initial guess & 30 iterations & 50 iterations & 90 iterations
\end{tabular}
\caption{Numerical approximation to the optimal shape for iterations: 0, 30, 50, 90. The optimal shape is drawn in blue, while the numerical approximation is drawn in red. The computational mesh used for solving the state, adjoint and deformation equations is plotted in black. After 90 iterations the two curves are indistinguishable to the naked eye.}\label{fig:optimal-shape}
\end{figure}

\begin{figure}[tb]
\begin{tabular}{ccc}
Energy \; $J(\Omega; y)$ & Area difference $\left|\mu(\Omega) - m_0\right|$ & Lagrange multiplier $\xi$\\[1ex] 
\includegraphics[width=0.3\linewidth]{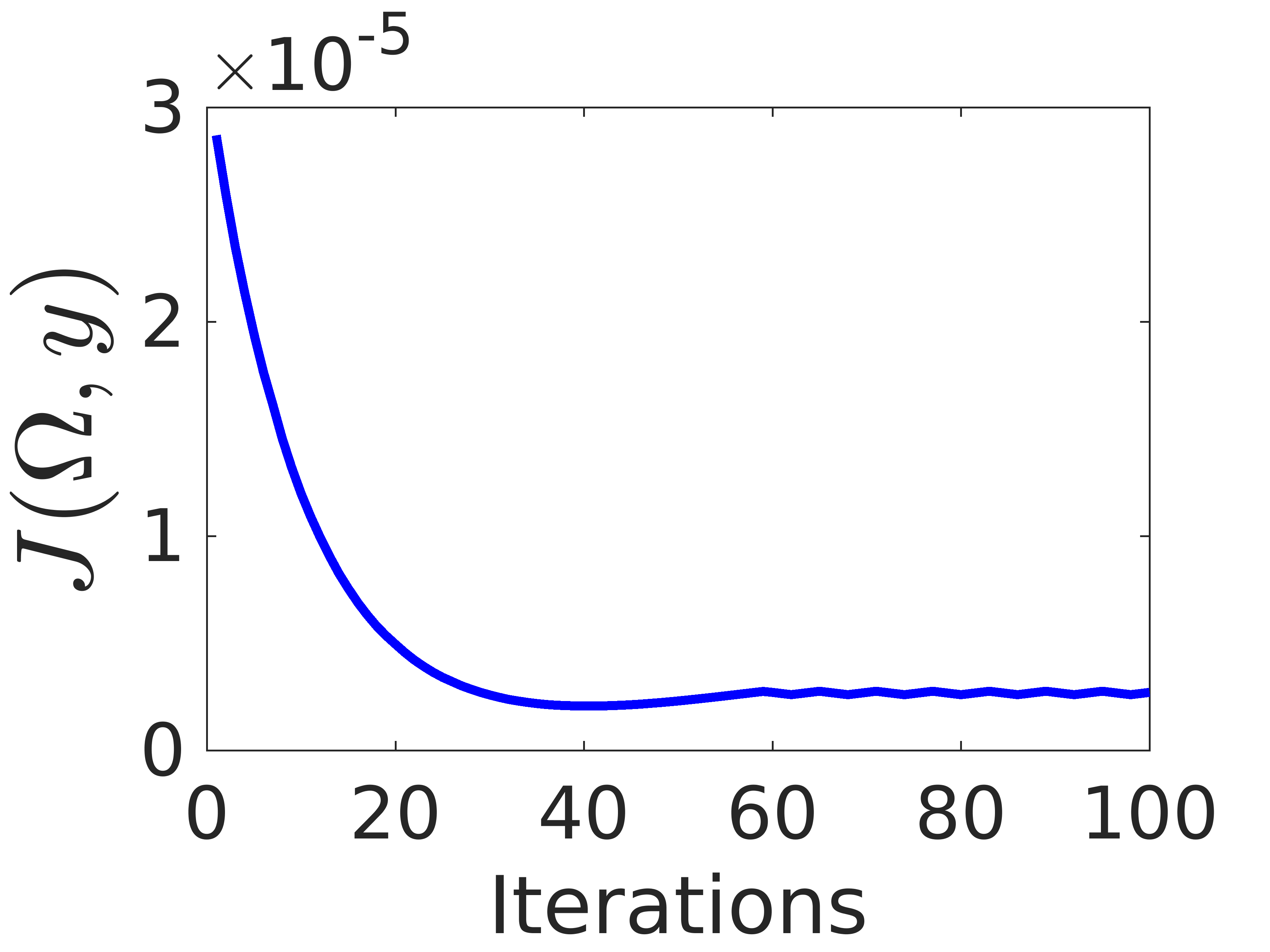} &
\includegraphics[width=0.3\linewidth]{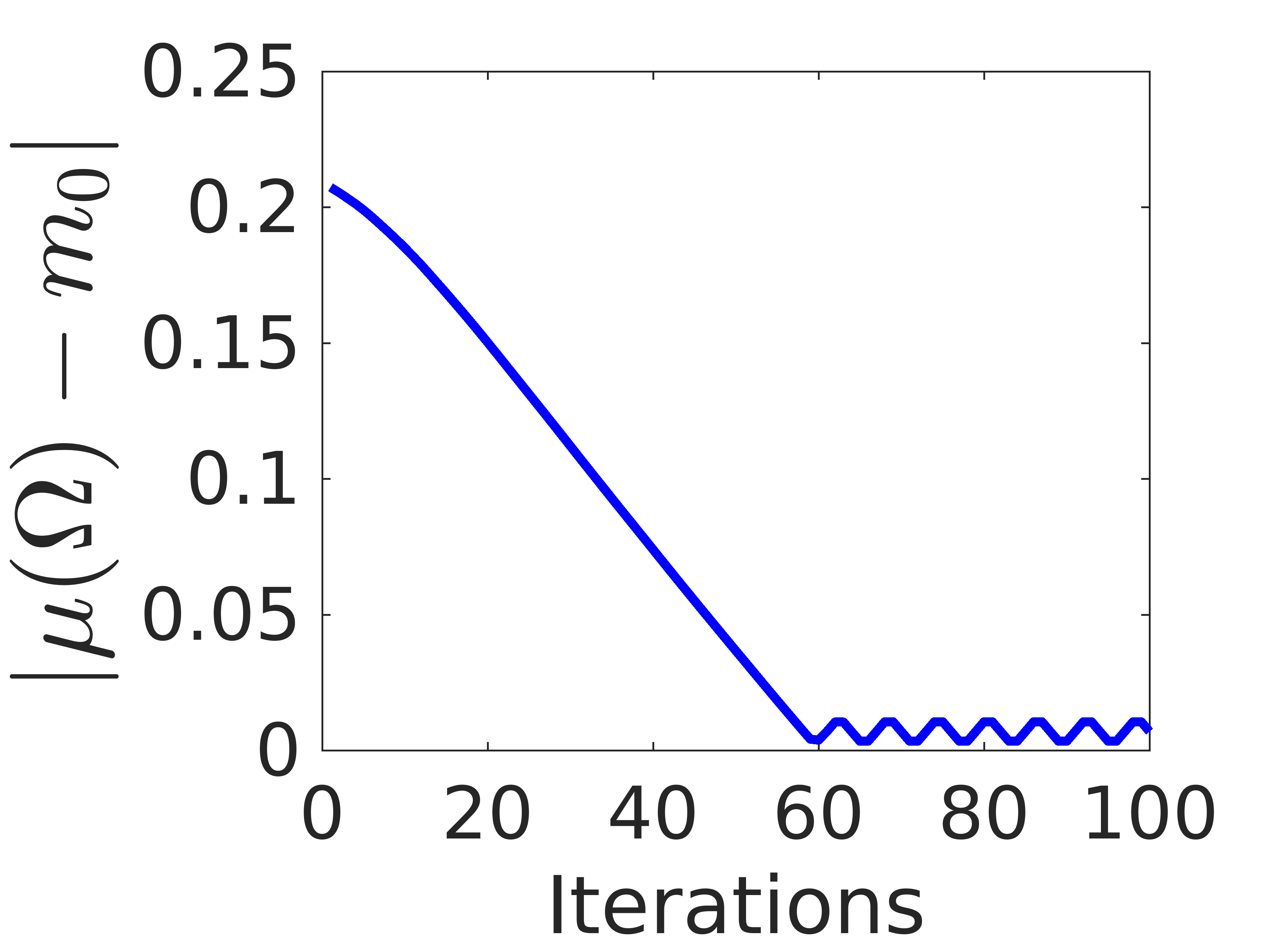} &
\includegraphics[width=0.3\linewidth]{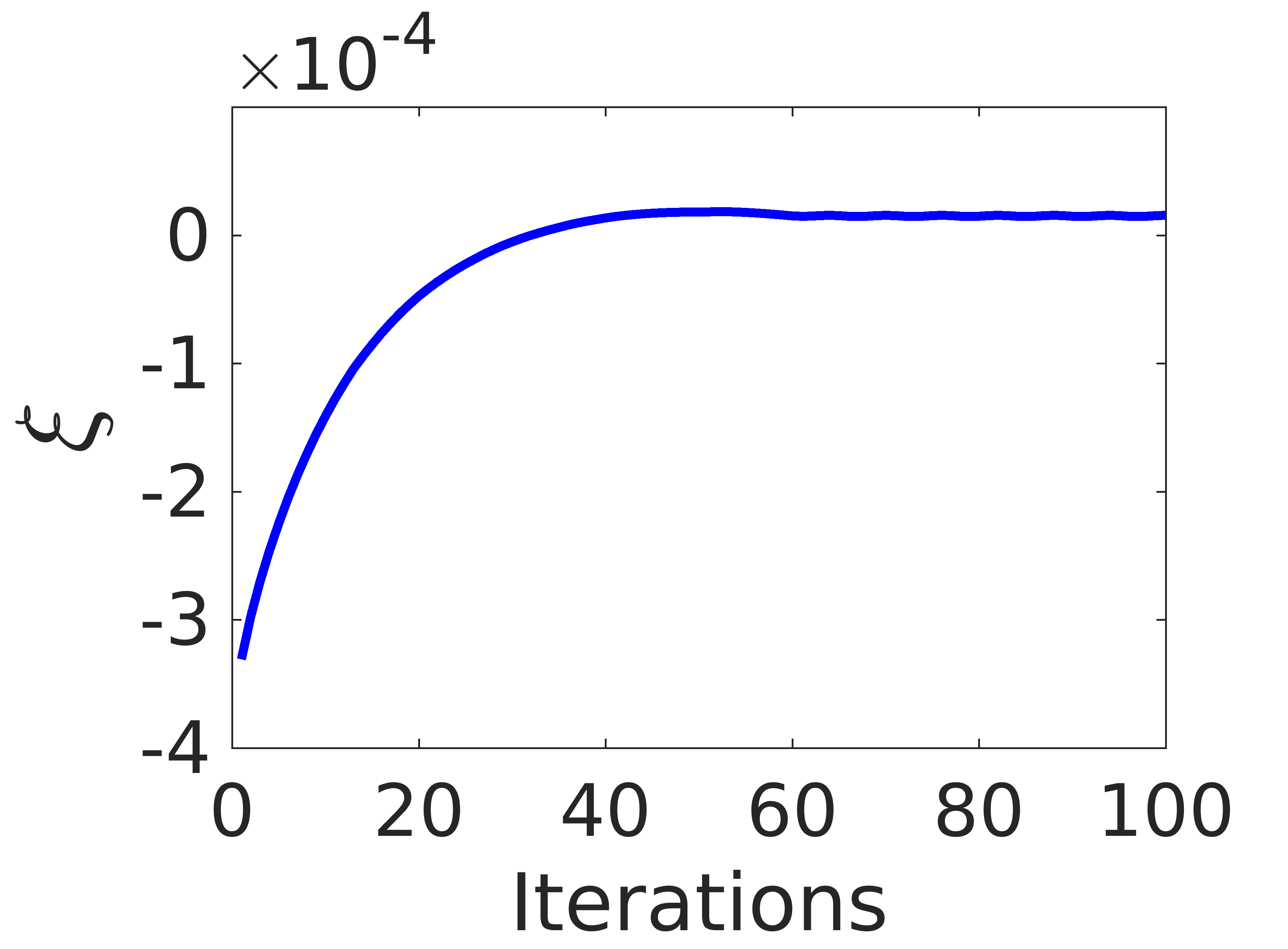}
\end{tabular}
\caption{Left: The target energy functional $J(\Omega;y)$ decreases as the number of iterations grows, settling after about 50 iterations. Center: The violation of the area constraint decreases steadily and reaches a steady state after about 60 iterations. Right: The value of the Lagrange multiplier grows with the number of iterations, switching from assigning more importance to the shape of the domain at the beginning, towards enforcement of the area constraint at the end.}\label{fig:energy}
\end{figure}

The value of the Lagrange multiplier is updated according to equation \eqref{LM:update}. As shown in the right panel of Figure \ref{fig:energy}, initially the value of the Lagrange multiplier $\xi$ is small, penalizing mostly the energy term $J(\Omega;y)$ that determines the shape of the domain. As the algorithm progresses, the value of  $\xi$ increases, penalizing mostly the violation of the volume constraint. Accordingly, the value of the energy decreases sharply at the beginning (left panel of Figure \ref{fig:energy} and increases a little before settling down, while the value of the area difference decreases steadily until it oscillates around a steady value (center panel of Figure \ref{fig:energy}). The slight increase in the energy towards the end of the process reflects the fact that a more energetically efficient shape can be obtained (similar to the red curve in the center--left panel of Figure \ref{fig:optimal-shape}) if the volume constraint is dropped. The evolution of the state variable $y$ as the shape of the domain changes is depicted in Figure \ref{fig:solutions}.

\begin{figure}[bt]
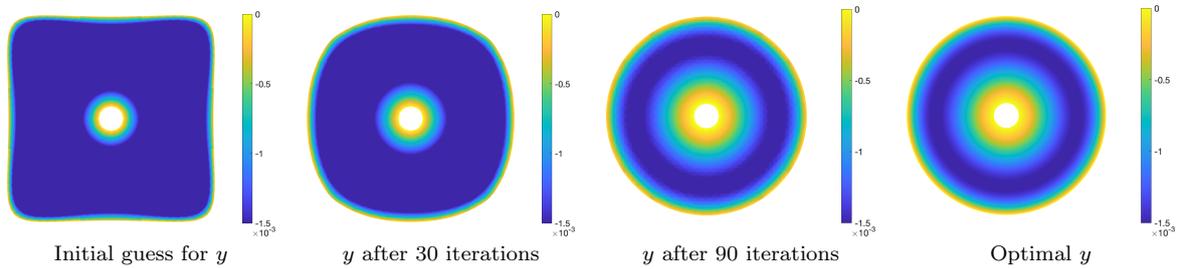

\begin{tabular}{cccc}
\includegraphics[width=0.225\linewidth]{figures/figures_example2/yh_ex8_it0_k1_mesh5.png} &
\includegraphics[width=0.225\linewidth]{figures/figures_example2/yh_ex8_it30_k1_mesh5.png} &
\includegraphics[width=0.225\linewidth]{figures/figures_example2/yh_ex8_it90_k1_mesh5.png} &
\includegraphics[width=0.225\linewidth]{figures/figures_example2/ytilde4_ex8.png} \\
Initial guess for $y$ & $y$ after 30 iterations & $y$ after 90 iterations & Optimal $y$  
\end{tabular}
    \caption{The three panels on the left depict the evolution of the state variable from the initial guess until 90 iterations. The rightmost panel shows the optimal state, which equals $\widetilde{y}$/4.} \label{fig:solutions}
\end{figure}

\section{Concluding remarks}

This article was mostly devoted to the analysis of the unfitted HDG discretizations associated to the state, adjoint and deformation problems arising from a model problem in geometric shape optimization. The analysis faces the, now well--known, challenges associated to a Neumann type boundary condition stemming from condition \eqref{mixed_Dh_velocity_c} in the problem for the deformation field. The challenge manifests itself in the error bounds for $\boldsymbol V$ through the appearance of the term $\norm{(G-G_h)\bm n\circ\bm\phi}_{\Gamma_h^{N}}$, whose optimal bound remains elusive. Nevertheless, numerical experiments (as is the case of Experiment 1) repeatedly show that the optimal order may be feasible, even in the when employing triangulations that are not fully admissible in the sense defined in Section \ref{sec:Geometry}. Ensuring these bounds theoretically remains an open challenge.

However, an interesting and promising path arises from the findings of the second numerical experiment. The fact that the transfer path technique enables a very accurate and efficient description of the optimal boundary. Efficiency in this context refers to the fact that the maximum length of the sides of the polygonal approximation can be made at least as small as $h^2$ for a regular mesh of diameter $h$. This feature has the potential of speeding up computations by several orders of magnitude and, very importantly, is independent of the method chosen for the discretization of the associated boundary value problems, as long as they are cast in a mixed form. Due to the fact that this feature is independent of HDG, its analysis and further explorations are the subject of ongoing work and will be shared in a separate communication  \cite{HeSaSo2025}. 

%
\section{Acknowledgements}
Esteban Henr\'iquez and Manuel E. Solano were partially supported by ANID-Chile through Basal Project FB210005 and Fondecyt 1240183. Tonatiuh S\'anchez-Vizuet was partially supported by the   United States National Science Foundation through the grant NSF-DMS-2137305 ``LEAPS-MPS: Hybridizable discontinuous Galerkin methods for non-linear integro-differential boundary value problems in magnetic plasma confinement''.

\appendix
\section{Auxiliary estimates}

For any $e\in \mathcal{E}_h^{\partial}$, any point $\bm x$ lying on the face $e$ and any smooth enough function $\bm v$ defined in $K^{ext}_e$, we set
\begin{equation}
    \label{def_Lambda}
    \Lambda^{\bm v}(\bm x) \,:=\, \frac{1}{l(\bm x)}\,\int_{0}^{l(\bm x)}(\bm v(\bm x\,+\,s\,\bm n_h)\,-\,\bm v(\bm x))\cdot\bm n_h \, ds\,.
\end{equation}
For each $e\in \mathcal{E}_h^{\partial}$, and vector--valued functions $\boldsymbol v$ and $\boldsymbol n_h$ such that their scalar product $\boldsymbol v\cdot \boldsymbol u_h \in H^1(K^{ext}_e)$, it satisfies (cf. \cite[Lemma 5.2]{CoQiuSo2014})
\begin{equation}\label{est:Lambda1}
    \norm{\Lambda^{\bm v}}_{e,l}\,\leq\, \frac{1}{\sqrt{3}}\,r_{e}\,\norm{\partial_{\bm n_h}(\bm v\cdot\bm n_h)}_{K^{ext}_{e},(h^{\perp})^2}\,,
\end{equation}
where we have used the notation $\partial_{\boldsymbol n_h} u := \nabla u\cdot\boldsymbol n_h$. Moreover, if $\bm v\in [\mathbb{P}_k(K_{e})]^{d}$, we have that
\begin{equation}\label{est:Lambda2}
    \norm{\Lambda^{\bm v}}_{e,l}\,\leq\, \frac{1}{\sqrt{3}}\,r_e^{3/2}\,C^{ext}_e \,C^{inv}_{e}\,\norm{\bm v}_{K_{e}}\,.
\end{equation}
We also recall the discrete trace inequality, whose proof can be found in \cite[Lemma 1.52]{di2011mathematical}, for instance.
Let $K$ be an  element of $\mathcal{T}_h$ with diameter $h$ and $v\in \mathbb{P}_{k}(K)$. Then there exists $C_{tr}>0$, depending only on $k$ and mesh regularity, such that
\begin{equation}
    \label{trace_inequality}
    h^{1/2}\,\norm{v}_{e}\,\leq\, C_{tr}\,\norm{v}_{K}\,,
\end{equation}
where $e$ is a edge or face of $K$.

The following estimates are used in Section \ref{sec:ErrorEstimates} to find estimates for the term $\norm{(G-G_h)\bm n\circ\bm\phi}_{\Gamma_h^N}$. 

\begin{lemma}
For a polynomial $p$ defined over a boundary element $K_e$ associated with the edge $e$ and the extension patch $K_e^{ext}$, and for all $v \in H^1(K_e)$ the following estimates hold
\begin{align}
\label{ineq:discrete-trace-h} h_e^{1/2}  \,\|p\|_{\Gamma_e}\,\lesssim\,&     \|p\|_{K^{ext}_e\cup K_e},\\
\label{ineq:discrete-trace-h-2}
  h_e^{1/2} \,\|v\|_{\Gamma_e}\,\lesssim \,& \left( \|v\|_{K^{ext}_e\cup K_e}^2 \,+\, h_e^2\, \|\nabla v\|_{K^{ext}_e\cup K_e}^2\right)^{1/2}\,.
\end{align}
\end{lemma}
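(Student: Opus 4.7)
The plan is to derive both estimates from the standard element-wise $H^1$ trace inequality on $K_e$, combined with a transfer-path argument that translates integrals from the curved segment $\Gamma_e$ back to the polygonal face $e$.

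First, for the $H^1$ estimate \eqref{ineq:discrete-trace-h-2}, I exploit the bijection $\boldsymbol\phi:e\to\Gamma_e$ and the simplifying assumption $\boldsymbol t=\boldsymbol n_h$ to write, via the fundamental theorem of calculus along the transfer path, for every $\boldsymbol x\in e$,
\[
v(\bar{\boldsymbol x}) \,=\, v(\boldsymbol x) \,\pm\, \int_{0}^{l(\boldsymbol x)} \nabla v(\boldsymbol x + s\boldsymbol n_h)\cdot \boldsymbol n_h\,ds\,.
\]
Cauchy–Schwarz in $s$ then gives $|v(\bar{\boldsymbol x})|^2 \lesssim |v(\boldsymbol x)|^2 + l(\boldsymbol x)\int_0^{l(\boldsymbol x)}|\nabla v(\boldsymbol x+s\boldsymbol n_h)|^2\,ds$, and integrating over $e$ turns the double integral into a volume integral over $K_e^{ext}$, which by construction is the region swept out by the transfer paths issuing from the points of $e$. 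Combining this with the local proximity bound $l(\boldsymbol x)\leq H_e^\perp\leq Rh$ from \eqref{eq:localProximity} yields
\[
\int_{e}|v(\bar{\boldsymbol x})|^2\,d\boldsymbol x \,\lesssim\, \|v\|_e^2 + R h\,\|\nabla v\|_{K_e^{ext}}^2\,.
\]
A face-local version of the norm equivalence \eqref{eq:equivalence} then gives $\|v\|_{\Gamma_e}^2\lesssim \int_e|v(\bar{\boldsymbol x})|^2\,d\boldsymbol x$. Multiplying by $h_e$, invoking the standard trace inequality $h_e\|v\|_e^2\lesssim \|v\|_{K_e}^2 + h_e^2\|\nabla v\|_{K_e}^2$, and using quasi-uniformity ($h\lesssim h_e$) together with $R\lesssim 1$ (a consequence of \eqref{eq:Rbound} for fixed polynomial degree) delivers \eqref{ineq:discrete-trace-h-2}.

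Second, for the polynomial inequality \eqref{ineq:discrete-trace-h}, I simply apply the bound just obtained to $v=p$ and eliminate the gradient contributions by inverse inequalities. On $K_e$ one has $h_e\|\nabla p\|_{K_e}\lesssim \|p\|_{K_e}$ directly. On the extension patch, since $\nabla p$ restricted to $K_e$ is a polynomial and its extrapolation agrees with $\nabla(E_h p)$ on $K_e^{ext}$, the defining property of $C^{ext}_e$ gives $\|\nabla p\|_{K_e^{ext}}\lesssim \sqrt{r_e}\,\|\nabla p\|_{K_e}\lesssim \sqrt{r_e}\,h_e^{-1}\|p\|_{K_e}$, and similarly $\|p\|_{K_e^{ext}}\lesssim \sqrt{r_e}\,\|p\|_{K_e}$. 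Using $r_e\leq R\lesssim 1$ absorbs both extension contributions into $\|p\|_{K_e}^2$ and produces \eqref{ineq:discrete-trace-h}.

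The only mildly delicate point is justifying the norm equivalence at the level of a single face $\Gamma_e\leftrightarrow e$ with constants uniform in the mesh; this follows either by applying \eqref{eq:equivalence} to functions supported near $\Gamma_e$ or, more directly, from the uniform Lipschitz regularity of the transfer map $\boldsymbol\phi$ on $\Gamma_h$ (equivalently, from a uniform bound on the Jacobian of $\boldsymbol\phi|_e$). I do not expect this to present any real obstacle.
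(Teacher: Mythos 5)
Your argument is correct, but it takes a genuinely different route from the paper's. The paper proves both inequalities with a single scaling argument: it maps the whole patch $K_e^{ext}\cup K_e$ onto a reference configuration $\widehat K$ of unit size, applies a trace inequality there (the discrete, finite-dimensional one for \eqref{ineq:discrete-trace-h}, the continuous $H^1$ one for \eqref{ineq:discrete-trace-h-2}), and scales back using $|\Gamma_e|\sim h_e$ and $|K_e^{ext}\cup K_e|\sim h_e^2$. You instead propagate values from the polygonal face $e$ out to $\Gamma_e$ along the transfer paths via the fundamental theorem of calculus, so that everything reduces to the standard trace inequality on the mesh element $K_e$ alone, plus, for the polynomial case, inverse estimates on $K_e$ and the extension bound $\|\cdot\|_{K_e^{ext}}\lesssim r_e^{1/2}C^{ext}_e\|\cdot\|_{K_e}$. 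The trade-off lies in the hidden uniformity assumptions: the paper's argument needs the trace constant on $\widehat K$ to be uniform over all admissible patch shapes, while yours needs the face-local Jacobian bound for $\bm\phi|_e$ (the point you flag yourself) together with the fact that the transfer paths sweep out $K_e^{ext}$ injectively with unit Jacobian --- both of which hold under the paper's simplifying assumption $\bm t=\bm n_h$ and the admissibility conditions, but would require the extra factor $(\bm t\cdot\bm n_h)^{-1}$ in general. Your route to \eqref{ineq:discrete-trace-h} via \eqref{ineq:discrete-trace-h-2} is more roundabout than the paper's direct discrete trace inequality on the reference patch, and the constants you absorb ($C^{ext}_e$, $C^{inv}_e$, and the inverse-inequality constant) all depend on the polynomial degree $k$, consistent with \eqref{constants_C}; your use of quasi-uniformity to replace $h$ by $h_e$ and of $R\lesssim 1$ are both legitimate under the stated admissibility hypotheses.
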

\begin{proof}
    Let us define the following reference patch
    \begin{equation*}
        \widehat{K} := \{ \widehat{\boldsymbol{y}}:\  \widehat{\boldsymbol{y}} = \boldsymbol{y} / (H_e+h_e)\,,\ \boldsymbol{y} \in K^{ext}_e\cup K_e\}\,.
    \end{equation*}
    Additionally, we define $\widehat{p}(\widehat{\boldsymbol{y}}) := p (\boldsymbol{y} / H_e)$ and $\widehat{\Gamma}_e$ denotes the part of the boundary of $\widehat{K}^{ext}_e$ that has been mapped form $\Gamma_e$. Then, applying trace inequality in the reference patch $\widehat{K}^{ext}_e$, we have 
    \begin{equation*}
        \norm{p}_{\Gamma_e}^2
        \lesssim |\Gamma_e| \norm{\widehat{p}}_{\widehat{\Gamma}_e}^2
        \lesssim
        |\Gamma_e| \norm{\widehat{p}}_{\widehat{K}}^2
        \lesssim |\Gamma_e| |K^{ext}_e\cup K_e|^{-1} \norm{p}_{K^{ext}_e\cup K_e}^2\lesssim h_e^{-1} \norm{p}_{K^{ext}_e\cup K_e}^2\qquad\forall p \in \mathbb{P}_k(K^{ext}_e\cup K_e)\,,
    \end{equation*}
    where we note that $|\Gamma_e|$ is proportional to $h_e$ and $|K^{ext}_e \cup K_e|$ is proportional to $h_e^2$, and then \eqref{ineq:discrete-trace-h} holds. Next, \eqref{ineq:discrete-trace-h-2} holds following similar steps to ones above, but using the continuous trace inequality on the reference patch.
\end{proof}

\begin{lemma}
If $B_e$ is a ball with center at the middle point of $e$, such that $K_e^{ext} \cup K_e \subset B_e$, and for all $m \in \mathbb{Z}^+_0$,
\[
\boldsymbol{E} : \boldsymbol{H}^{m+1}(\Omega) \rightarrow \boldsymbol{H}^{m+1}(\mathbb{R}^d)
\]
is an extension operator such that
\begin{equation}
\label{eq:extension-op}
\boldsymbol{E}(\boldsymbol{\rho})\vert_{\Omega} = \boldsymbol{\rho} \quad \text{ for all }\,\boldsymbol{\rho} \in \boldsymbol{H}^{m+1}(\Omega) \qquad \text{ and } \qquad
\|\boldsymbol{E}(\boldsymbol{\rho})\|_{\boldsymbol{H}^{m+1}( \mathbb{R}^d)} \lesssim  \|\boldsymbol{\rho}\|_{\boldsymbol{H}^{m+1}(\Omega)}\,,
\end{equation}
then
\begin{align}
\label{ineq:Ipext}
 \|\bm I_{\bm p}\|_{K^{ext}_e} \,\lesssim 
  \,& (1\,+\,r_e^{1/2}) \,h_e^{m+1}\,	|\boldsymbol{E}(\boldsymbol{p})|_{\boldsymbol{H}^{m+1}(B_e)}
	\,+\,	r_e^{1/2}\,\| \boldsymbol{I}_{\boldsymbol{p}} \|_{K_e}\,, \\[1ex]
    \label{ineq:nablaIpext}
 \|\nabla \bm I_{\bm p}\|_{K^{ext}_e}
	\,\lesssim\,& h^{m}\,	|\boldsymbol{E}(\boldsymbol{p})|_{\boldsymbol{H}^{m+1}(B_e)}
	\,+\,r_e^{1/2}\, h_e^{m}\,\|\boldsymbol{E}( \boldsymbol{p})\|_{{\bm H}^{m+1}(K_e)}
	\,+\,r_e^{1/2}\, h_e^{-1}\,	\|\boldsymbol{I}_{\boldsymbol{p}} \|_{K_e}\,.
\end{align}
\end{lemma}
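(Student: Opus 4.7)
The key idea is to compare the HDG projection error on the extension patch $K_e^{ext}$ with its counterpart on the boundary element $K_e$ by inserting a globally--defined polynomial approximant supported on the enclosing ball $B_e$. Since $K_e^{ext}\cup K_e\subset B_e$, let $P\in[\mathbb P_k(B_e)]^d$ be a polynomial approximation of $\boldsymbol E(\boldsymbol p)$ (e.g.\ a Scott--Zhang--type quasi--interpolant or the orthogonal $L^2$ projection onto $[\mathbb P_k(B_e)]^d$) satisfying the standard Bramble--Hilbert estimate
\[
\|\boldsymbol E(\boldsymbol p)-P\|_{B_e}+h_e\,\|\nabla(\boldsymbol E(\boldsymbol p)-P)\|_{B_e}\;\lesssim\; h_e^{m+1}\,|\boldsymbol E(\boldsymbol p)|_{\boldsymbol H^{m+1}(B_e)}.
\]
Since $K_e^{ext}\subset\Omega$, we may identify $\boldsymbol p=\boldsymbol E(\boldsymbol p)$ on $K_e^{ext}$, and the extrapolation of $\boldsymbol\Pi_{\boldsymbol Z}\boldsymbol p$ to $K_e^{ext}$ is simply the polynomial extension of the element polynomial.

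For the bound \eqref{ineq:Ipext} on $\|\boldsymbol I_{\boldsymbol p}\|_{K_e^{ext}}$, I split by the triangle inequality
\[
\|\boldsymbol I_{\boldsymbol p}\|_{K_e^{ext}}\;\le\;\|\boldsymbol E(\boldsymbol p)-P\|_{K_e^{ext}}+\|P-\boldsymbol\Pi_{\boldsymbol Z}\boldsymbol p\|_{K_e^{ext}}.
\]
The first term is controlled directly by Bramble--Hilbert on $B_e$, yielding the $h_e^{m+1}|\boldsymbol E(\boldsymbol p)|_{\boldsymbol H^{m+1}(B_e)}$ contribution. Since $P-\boldsymbol\Pi_{\boldsymbol Z}\boldsymbol p$ is a polynomial of degree at most $k$, the definition of $C_e^{ext}$ (extended from its scalar normal--component formulation to full vector polynomials by equivalence of norms on finite dimensional spaces, as was done in \cite[Lemma~A.1]{CoQiuSo2014}) combined with mesh regularity yields the equivalence $\|q\|_{K_e^{ext}}\lesssim r_e^{1/2}\|q\|_{K_e}$. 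Applying this and inserting $\pm\boldsymbol p$ gives
\[
\|P-\boldsymbol\Pi_{\boldsymbol Z}\boldsymbol p\|_{K_e^{ext}}\;\lesssim\; r_e^{1/2}\!\left(\|P-\boldsymbol E(\boldsymbol p)\|_{K_e}+\|\boldsymbol I_{\boldsymbol p}\|_{K_e}\right)\;\lesssim\; r_e^{1/2}h_e^{m+1}|\boldsymbol E(\boldsymbol p)|_{\boldsymbol H^{m+1}(B_e)}+r_e^{1/2}\|\boldsymbol I_{\boldsymbol p}\|_{K_e},
\]
where I used that $K_e\subset B_e$. Collecting the two contributions produces exactly \eqref{ineq:Ipext}.

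For the gradient estimate \eqref{ineq:nablaIpext} I follow an entirely analogous split
\[
\|\nabla \boldsymbol I_{\boldsymbol p}\|_{K_e^{ext}}\;\le\;\|\nabla(\boldsymbol E(\boldsymbol p)-P)\|_{K_e^{ext}}+\|\nabla(P-\boldsymbol\Pi_{\boldsymbol Z}\boldsymbol p)\|_{K_e^{ext}}.
\]
The first term is bounded by $h_e^{m}|\boldsymbol E(\boldsymbol p)|_{\boldsymbol H^{m+1}(B_e)}$ thanks to Bramble--Hilbert on $B_e\supset K_e^{ext}$. For the second one, the gradient of a polynomial of degree $\le k$ is again a polynomial, so the same norm equivalence between $K_e^{ext}$ and $K_e$ applies:
\[
\|\nabla(P-\boldsymbol\Pi_{\boldsymbol Z}\boldsymbol p)\|_{K_e^{ext}}\;\lesssim\; r_e^{1/2}\,\|\nabla(P-\boldsymbol\Pi_{\boldsymbol Z}\boldsymbol p)\|_{K_e}.
\]
I then insert $\pm\nabla\boldsymbol E(\boldsymbol p)$. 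The piece $\|\nabla(\boldsymbol E(\boldsymbol p)-P)\|_{K_e}\le\|\nabla(\boldsymbol E(\boldsymbol p)-P)\|_{B_e}\lesssim h_e^{m}|\boldsymbol E(\boldsymbol p)|_{\boldsymbol H^{m+1}(B_e)}$ is absorbed into the first term with a factor $r_e^{1/2}$ (bounded by a constant). The remaining piece is $\|\nabla\boldsymbol I_{\boldsymbol p}\|_{K_e}$, which is controlled via the HDG projection gradient estimate \eqref{eq:grad-estimate} by $h_e^{m}\|\boldsymbol E(\boldsymbol p)\|_{\boldsymbol H^{m+1}(K_e)}$, producing the middle term in \eqref{ineq:nablaIpext}. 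The third term $r_e^{1/2}h_e^{-1}\|\boldsymbol I_{\boldsymbol p}\|_{K_e}$ arises from an alternative, coarser route in which the inverse inequality $\|\nabla q\|_{K_e}\lesssim h_e^{-1}\|q\|_{K_e}$ is applied directly to $q=P-\boldsymbol\Pi_{\boldsymbol Z}\boldsymbol p$ and then the triangle inequality is used on $\|q\|_{K_e}$; this dual strategy is needed because later estimates use \eqref{ineq:nablaIpext} in a form where only $\|\boldsymbol I_{\boldsymbol p}\|_{K_e}$ (not its gradient) is readily available.

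The main technical obstacle is bookkeeping: one has to justify that the scalar--normal version of $C_e^{ext}$ provides a bound with the same $r_e^{1/2}$ scaling for full vector polynomials and their gradients. This follows from the finite--dimensionality of $[\mathbb P_k(K_e)]^d$, shape regularity, and a scaling argument on the reference patch, as already used in \cite[Lemma~A.1]{CoQiuSo2014}. Everything else is a direct application of Bramble--Hilbert on $B_e$, the inverse inequality on $K_e$, the HDG projection estimates recalled in the appendix, and the inclusions $K_e,\,K_e^{ext}\subset B_e$.
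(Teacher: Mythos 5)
The paper does not actually prove this lemma---it simply defers to \cite[Lemma 3]{CaSo2021}---so your argument is being judged against the standard proof that reference contains, and it matches it: introduce a degree-$k$ polynomial approximant $P$ of $\boldsymbol E(\boldsymbol p)$ on the ball $B_e$ with a Bramble--Hilbert estimate, split $\boldsymbol I_{\boldsymbol p}$ as $(\boldsymbol E(\boldsymbol p)-P)+(P-\boldsymbol\Pi_{\boldsymbol Z}\boldsymbol p)$, pass the polynomial piece from $K_e^{ext}$ to $K_e$ via the $C^{ext}_e$ norm equivalence (which the paper itself already uses for full vector polynomials in the proof of Lemma \ref{lemma:rate_normGs}), and finish with the triangle inequality and an inverse estimate. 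Your derivation of \eqref{ineq:Ipext} reproduces the stated bound exactly, and for \eqref{ineq:nablaIpext} your ``route B'' (inverse inequality on $P-\boldsymbol\Pi_{\boldsymbol Z}\boldsymbol p$ followed by the triangle inequality) already yields the first and third terms of the right-hand side, which suffices since all terms there are nonnegative. Two small caveats: your ``route A'' invokes \eqref{eq:grad-estimate}, which carries an extra $|y|_{H^{m+1}(K_e)}$ contribution absent from the middle term of \eqref{ineq:nablaIpext}, so that route alone would prove a slightly different (not weaker, just differently stated) bound---this is why the inverse-inequality route is the one to rely on; and your absorption of $r_e^{1/2}$ into generic constants is only legitimate because the admissibility condition \eqref{eq:Rbound} bounds $R=\max_e r_e$, a point worth stating explicitly since the lemma keeps the factor $(1+r_e^{1/2})$ visible. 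Neither caveat affects correctness.
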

For the proof of this Lemma we refer to \cite[Lemma 3]{CaSo2021}.
\section{HDG--projections and their properties}\label{sec:HDGprojection}
%
For the error analysis we will use an extension of the HDG--projection developed in \cite{CoGoSa2010} for matrix/vector valued functions. For $(\bm \sigma, \bm w)\in \mathbb{Z}_h\times \bm W_h$ we define $\Pi_h(\bm \sigma, \bm w) := (\bm\Pi_{\mathbb{Z}}\bm \sigma,\bm\Pi_{\bm W}\bm w)$ to be the unique solutions of the system
\begin{subequations}
    \label{HDG_proyection2} 
    \begin{align}
        \label{HDG_proyection2_a}
        (\bm\Pi_{\mathbb Z}\bm \sigma, \bm s)_{K} & \,=\, (\bm r, \bm s)_{K} && \forall \,\bm s\in [\mathbb{P}_{k-1}(K)]^{d\times d}\,,\\
        \label{HDG_proyection2_b}
        (\Pi_{\bm W} \bm w, \bm v)_{K} &\,=\, (\bm w,\bm v)_{K} && \forall \,\bm v\in [\mathbb{P}_{k-1}(K)]^d\,,\\
        \label{HDG_proyection2_c}
        \langle\bm \Pi_{\mathbb  Z}\bm \sigma\bm n\,+\,\tau\,\Pi_{\bm W}\bm w,\bm \mu\rangle_{e} &\,=\, \langle\bm \sigma\bm n\,+\,\tau\, \bm w,\bm \mu\rangle_{e} && \forall\,\bm\mu \in [\mathbb{P}_{k}(e)]^d,\;\text{ and }\,\forall\, e\subset \partial K\,.
    \end{align}
\end{subequations}
As proven in \cite[Theorem 2.1]{CoGoSa2010},  the HDG--projection is well--defined and satisfies the  following properties:
\begin{itemize}
\item[$\bullet$] If $\bm r\in \bm H^{k+1}(K)$ and $w\in H^{k+1}(K)$,
\begin{subequations}
    \label{theorem:proyHDG}
    \begin{alignat}{3}
        \label{theorem:proyHDG_a}
        \norm{\bm\Pi_{\bm Z}\bm r\,-\,\bm r}_{K}&\,\lesssim\,  h_{K}^{k+1}\,|\bm r|_{\bm H^{k+1}(K)}\,+\, h_{K}^{k+1}\,\tau\,|w|_{H^{k+1}(K)}\,,\\[1ex]
        \label{theorem:proyHDG_b}
        \norm{\Pi_W w\,-\, w}_{K} &\,\lesssim\,  h_{K}^{k +1}\,|w|_{H^{k+1}(K)}\,+\, \frac{h_{K}^{k+1}}{\tau}\,|\nabla\cdot \bm r|_{H^{k}(K)}\,.
    \end{alignat}
\end{subequations}

\item[$\bullet$] If $e$ is an edge or face of $\Gamma_h$ then (cf. \cite{CoQiuSo2014})
    \begin{equation}\label{eq:I_r-Gamma-estimate}        \norm{(\bm\Pi_{\bm Z}\bm r\,-\,\bm r)\,{\bm n}_h}_{\Gamma_h,h^{\perp}}
        \,\lesssim\, h^{k+1}\,|\bm r|_{{\bm H}^{k+1}(\Omega)}
        \,+\,h^{k+1}\,\tau\, |w|_{ H^{k+1}(\Omega)}\,,
    \end{equation}
where the weighted norm $\|\cdot\|_{\Gamma_h,h^\perp}$ is as defined in \eqref{eq:weightedNorms}.
\end{itemize}

We will also make use of the orthogonal $L^2$--projection over an element $K$ of a function $v$, denoted by $P_{L^2(K)} v$. It is well known (\cite[Lemmas 1.58 and 1.59]{di2011mathematical}) that, if $v\in H^{l+1}(K)$ for $0\leq l\leq k$, 
\begin{subequations}
\begin{align}
    \label{inequality_projL2_a}
    |v\,-\,P_{L^2(K)}v|_{H^{m}(K)}&\,\lesssim\, h^{l+1-m}\,|v|_{H^{l+1}(K)}\quad\forall m\in \{0,\ldots,k\}\,,\\
    \label{inequality_projL2_b}
    \norm{v\,-\,P_{L^2(K)}v}_{\partial K}&\,\lesssim\,h^{l+1/2}\,|v|_{H^{l+1}(K)}\,.
\end{align}
\end{subequations}
We point out that the previous projection errors can be extended to the vector-valued case.

As final property we state the following estimate
\begin{lemma}\label{lem:grad-estimate}
    Let $\boldsymbol{r} \in \boldsymbol{H}^{k+1}(K)$ and $w \in H^{k+1}(K)$, then
    \begin{equation}\label{eq:grad-estimate}
        \norm{\nabla (\boldsymbol{\Pi}_{\boldsymbol{Z}}\boldsymbol{r} - \boldsymbol{r})}_K 
        \lesssim h_K^{k} \big( |\boldsymbol{r}|_{\boldsymbol{H}^{k+1}(K)} + |y|_{H^{k+1}(K)} \big)\,.
    \end{equation}
\end{lemma}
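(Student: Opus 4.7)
\textbf{Proof plan for Lemma \ref{lem:grad-estimate}.}

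The plan is to reduce the gradient estimate to already--known $L^2$ estimates for the HDG projection by inserting the standard $L^2$--projection of $\boldsymbol r$ onto $[\mathbb P_k(K)]^{d\times d}$ and exploiting an inverse inequality on the polynomial component. Concretely, let $P_h \boldsymbol r$ denote the elementwise $L^2$--projection of $\boldsymbol r$ onto $[\mathbb P_k(K)]^{d\times d}$. By the triangle inequality one obtains
\[
\norm{\nabla(\boldsymbol{\Pi}_{\boldsymbol Z}\boldsymbol r - \boldsymbol r)}_{K}
\le
\norm{\nabla(\boldsymbol{\Pi}_{\boldsymbol Z}\boldsymbol r - P_h \boldsymbol r)}_{K}
+\norm{\nabla(P_h \boldsymbol r - \boldsymbol r)}_{K}.
\]
The second summand is purely an approximation error of the $L^2$--projection; by \eqref{inequality_projL2_a} applied with $l=k$ and $m=1$ it is bounded by $h_K^{k}\,|\boldsymbol r|_{\boldsymbol H^{k+1}(K)}$, which is already of the desired order.

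For the first summand I would use the fact that $\boldsymbol{\Pi}_{\boldsymbol Z}\boldsymbol r - P_h \boldsymbol r$ is a polynomial on $K$, so the standard inverse inequality on shape--regular elements gives
\[
\norm{\nabla(\boldsymbol{\Pi}_{\boldsymbol Z}\boldsymbol r - P_h \boldsymbol r)}_{K}
\lesssim h_K^{-1}\, \norm{\boldsymbol{\Pi}_{\boldsymbol Z}\boldsymbol r - P_h \boldsymbol r}_{K}.
\]
A further triangle inequality reduces the right--hand side to the two $L^2$--errors
\[
\norm{\boldsymbol{\Pi}_{\boldsymbol Z}\boldsymbol r - P_h \boldsymbol r}_{K}
\le \norm{\boldsymbol{\Pi}_{\boldsymbol Z}\boldsymbol r - \boldsymbol r}_{K}
+ \norm{\boldsymbol r - P_h \boldsymbol r}_{K},
\]
and these are controlled respectively by the HDG--projection estimate \eqref{theorem:proyHDG_a} and the $L^2$--projection estimate \eqref{inequality_projL2_a}, both giving contributions of order $h_K^{k+1}$ times a combination of $|\boldsymbol r|_{\boldsymbol H^{k+1}(K)}$ and $|w|_{H^{k+1}(K)}$ (the $\tau$ appearing in \eqref{theorem:proyHDG_a} is a fixed $\mathcal O(1)$ constant and is absorbed in ``$\lesssim$''). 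Combining this with the inverse inequality yields an $h_K^{k}$ bound, matching the first summand, and the lemma follows.

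The only subtle point, and where I would be most careful, is the assumption that an inverse inequality of the form $\norm{\nabla v_h}_{K}\lesssim h_K^{-1}\norm{v_h}_{K}$ is available uniformly over the admissible family $\{\mathcal T_h\}_{h>0}$; this is standard on shape--regular meshes and is guaranteed by the uniform shape regularity assumption of Section \ref{sec:Geometry}. There is no genuine obstacle here, since the statement ultimately reduces to a one--line argument combining \eqref{theorem:proyHDG_a}, \eqref{inequality_projL2_a}, and an inverse estimate; I read the ``$|y|_{H^{k+1}(K)}$'' in the right--hand side as a typo for ``$|w|_{H^{k+1}(K)}$'', consistent with how \eqref{theorem:proyHDG_a} is stated.
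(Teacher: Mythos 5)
Your proposal is correct and follows exactly the same route as the paper's proof: insert the elementwise $L^2$--projection, bound the polynomial difference via an inverse inequality, and conclude with \eqref{theorem:proyHDG_a} and \eqref{inequality_projL2_a}. Your reading of $|y|_{H^{k+1}(K)}$ as a typo for $|w|_{H^{k+1}(K)}$ is also consistent with the paper, whose own proof statement and argument (which itself slips between $\boldsymbol r$ and $\boldsymbol p$ notationally) rely on the second argument of the HDG projection exactly as you describe.
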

\begin{proof}
    Note that
    \begin{equation*}
        \norm{\nabla (\boldsymbol{\Pi}_{\boldsymbol{Z}}\boldsymbol{r} - \boldsymbol{r})}_K 
        \leq \norm{\nabla (\boldsymbol{\Pi}_{\boldsymbol{Z}}\boldsymbol{p} - \boldsymbol{P}_{\boldsymbol{L}^2(K)} \boldsymbol{p})}_K
        + \norm{\nabla (\boldsymbol{P}_{\boldsymbol{L}^2(K)} \boldsymbol{p} - \boldsymbol{p})}_K.
    \end{equation*}
    Using and inverse inequality (see \cite[Lemma 1.44]{di2011mathematical}) in the first term and \eqref{inequality_projL2_a} in the second term,
    \begin{equation*}
        \norm{\nabla (\boldsymbol{\Pi}_{\boldsymbol{Z}}\boldsymbol{r} - \boldsymbol{r})}_K 
        \lesssim h_K^{-1} \norm{\boldsymbol{\Pi}_{\boldsymbol{Z}}\boldsymbol{p} - \boldsymbol{P}_{\boldsymbol{L}^2(K)} \boldsymbol{p}}_K
        + h_K^k |\boldsymbol{p}|_{\boldsymbol{H}^{k+1}(K)}
        \leq h_K^{-1} \norm{\boldsymbol{\Pi}_{\boldsymbol{Z}}\boldsymbol{p} - \boldsymbol{p}}_K
        + h_K^{-1} \norm{\boldsymbol{p} - \boldsymbol{P}_{\boldsymbol{L}^2(K)} \boldsymbol{p}}_K
        + h_K^k |\boldsymbol{p}|_{\boldsymbol{H}^{k+1}(K)}.
    \end{equation*}
    Finally, applying \eqref{theorem:proyHDG_a} and \eqref{inequality_projL2_a}, we find that \eqref{eq:grad-estimate} holds.
\end{proof}

{\footnotesize
\bibliographystyle{abbrv}
\bibliography{refs}

\begin{thebibliography}{10}

\bibitem{ALLAIRE201422}
G.~Allaire, C.~Dapogny, and P.~Frey.
\newblock Shape optimization with a level set based mesh evolution method.
\newblock {\em Computer Methods in Applied Mechanics and Engineering},
  282:22--53, 2014.

\bibitem{Allaire}
G.~Allaire, F.~Jouve, and A.-M. Toader.
\newblock Structural optimization by the level-set method.
\newblock In P.~Colli, C.~Verdi, and A.~Visintin, editors, {\em Free Boundary
  Problems}, pages 1--15, Basel, 2004. Birkh{\"a}user Basel.

\bibitem{araya2019posteriori}
R.~Araya, M.~Solano, and P.~Vega.
\newblock A posteriori error analysis of an {HDG} method for the {O}seen
  problem.
\newblock {\em Applied Numerical Mathematics}, 146:291--308, 2019.

\bibitem{ArSaSo:2025}
F.~Artaza-Covarrubias, T.~S\'anchez{-}Vizuet, and M.~Solano.
\newblock A coupled {HDG} discretization for the interaction between acoustic
  and elastic waves.
\newblock {\em (Submitted)}, 2025.
\newblock (arXiv: 2505.15106).

\bibitem{Be2014}
L.~L. Beghini, A.~Beghini, N.~Katz, W.~F. Baker, and G.~H. Paulino.
\newblock Connecting architecture and engineering through structural topology
  optimization.
\newblock {\em Engineering Structures}, 59:716--726, 2014.

\bibitem{BeMaSo2025}
I.~Bermúdez, J.~Manríquez, and M.~Solano.
\newblock A hybridizable discontinuous {G}alerkin method for {S}tokes/{D}arcy
  coupling on dissimilar meshes.
\newblock {\em IMA Journal of Numerical Analysis}, Apr. 2025.

\bibitem{bertsch2008topology}
C.~Bertsch, A.~P. Cisilino, S.~Langer, and S.~Reese.
\newblock Topology optimization of {3D} elastic structures using boundary
  elements.
\newblock In {\em PAMM: Proceedings in Applied Mathematics and Mechanics},
  volume~8, pages 10771--10772. Wiley Online Library, 2008.

\bibitem{braida2022shape}
B.~Braida, J.~Dalphin, C.~Dapogny, P.~Frey, and Y.~Privat.
\newblock Shape and topology optimization for maximum probability domains in
  quantum chemistry.
\newblock {\em Numerische Mathematik}, pages 1--48, 2022.

\bibitem{BuElHaLarLar:2017}
E.~Burman, D.~Elfverson, P.~Hansbo, M.~G. Larson, and K.~Larsson.
\newblock A cut finite element method for the {B}ernoulli free boundary value
  problem.
\newblock {\em Computer Methods in Applied Mechanics and Engineering},
  317:598--618, 2017.

\bibitem{burman2018shape}
E.~Burman, D.~Elfverson, P.~Hansbo, M.~G. Larson, and K.~Larsson.
\newblock Shape optimization using the cut finite element method.
\newblock {\em Computer Methods in Applied Mechanics and Engineering},
  328:242--261, 2018.

\bibitem{CaSo2021}
L.~Camargo and M.~Solano.
\newblock A high order unfitted {HDG} method for the {H}elmholtz equation with
  first order absorbing boundary condition.
\newblock {\em Preprint, Centro de Investigación en Ingeniería Matemática
  (CI$^2$MA)}, 27, 2021.

\bibitem{cances2004electrons}
E.~Canc{\`e}s, R.~Keriven, F.~Lodier, and A.~Savin.
\newblock How electrons guard the space: shape optimization with probability
  distribution criteria.
\newblock {\em Theoretical Chemistry Accounts}, 111(2):373--380, 2004.

\bibitem{carrero2006hybridized}
J.~Carrero, B.~Cockburn, and D.~Sch{\"o}tzau.
\newblock Hybridized globally divergence-free {LDG} methods. {P}art {I}: {T}he
  {S}tokes problem.
\newblock {\em Mathematics of computation}, 75(254):533--563, 2006.

\bibitem{cesmelioglu2013analysis}
A.~Cesmelioglu, B.~Cockburn, N.~C. Nguyen, and J.~Peraire.
\newblock Analysis of {HDG} methods for {O}seen equations.
\newblock {\em Journal of Scientific Computing}, 55(2):392--431, 2013.

\bibitem{cesmelioglu2017analysis}
A.~Cesmelioglu, B.~Cockburn, and W.~Qiu.
\newblock Analysis of a hybridizable discontinuous {G}alerkin method for the
  steady-state incompressible {N}avier-{S}tokes equations.
\newblock {\em Mathematics of Computation}, 86(306):1643--1670, 2017.

\bibitem{challis2009level}
V.~J. Challis and J.~K. Guest.
\newblock Level set topology optimization of fluids in {S}tokes flow.
\newblock {\em International journal for numerical methods in engineering},
  79(10):1284--1308, 2009.

\bibitem{ChCo2012}
Y.~Chen and B.~Cockburn.
\newblock Analysis of variable-degree {HDG} methods for convection-diffusion
  equations. {P}art {I}: general nonconforming meshes.
\newblock {\em IMA Journal of Numerical Analysis}, 32(4):1267–1293, Feb.
  2012.

\bibitem{ChCo2013}
Y.~Chen and B.~Cockburn.
\newblock Analysis of variable-degree {HDG} methods for convection-diffusion
  equations. {P}art {II}: Semimatching nonconforming meshes.
\newblock {\em Mathematics of Computation}, 83(285):87--111, May 2013.

\bibitem{cockburn2008superconvergent}
B.~Cockburn, B.~Dong, and J.~Guzm{\'a}n.
\newblock A superconvergent {LDG}-hybridizable {G}alerkin method for
  second-order elliptic problems.
\newblock {\em Mathematics of Computation}, 77(264):1887--1916, 2008.

\bibitem{cockburn2009hybridizable}
B.~Cockburn, B.~Dong, J.~Guzm{\'a}n, M.~Restelli, and R.~Sacco.
\newblock A hybridizable discontinuous {G}alerkin method for steady-state
  convection-diffusion-reaction problems.
\newblock {\em SIAM Journal on Scientific Computing}, 31(5):3827--3846, 2009.

\bibitem{cockburn2009unified}
B.~Cockburn, J.~Gopalakrishnan, and R.~Lazarov.
\newblock Unified hybridization of discontinuous {G}alerkin, mixed, and
  continuous {G}alerkin methods for second order elliptic problems.
\newblock {\em SIAM Journal on Numerical Analysis}, 47(2):1319--1365, 2009.

\bibitem{CoGoSa2010}
B.~Cockburn, J.~Gopalakrishnan, and F.-J. Sayas.
\newblock A projection-based error analysis of {HDG} methods.
\newblock {\em Mathematics of Computation}, 79(271):1351--1367, Mar. 2010.

\bibitem{cockburn2009superconvergent}
B.~Cockburn, J.~Guzm{\'a}n, and H.~Wang.
\newblock Superconvergent discontinuous {G}alerkin methods for second-order
  elliptic problems.
\newblock {\em Mathematics of Computation}, 78(265):1--24, 2009.

\bibitem{CoQiuSo2014}
B.~Cockburn, W.~Qiu, and M.~Solano.
\newblock A priori error analysis for {HDG} methods using extensions from
  subdomains to achieve boundary conformity.
\newblock {\em Mathematics of computation}, 83(286):665--699, 2014.

\bibitem{cockburn2014uniform}
B.~Cockburn and V.~Quenneville-B{\'e}lair.
\newblock Uniform-in-time superconvergence of the {HDG} methods for the
  acoustic wave equation.
\newblock {\em Mathematics of Computation}, 83(285):65--85, 2014.

\bibitem{CoSaSo2012}
B.~Cockburn, F.-J. Sayas, and M.~Solano.
\newblock Coupling at a distance {HDG} and {BEM}.
\newblock {\em SIAM Journal on Scientific Computing}, 34(1):A28--A47, 2012.

\bibitem{cockburn2013conditions}
B.~Cockburn and K.~Shi.
\newblock Conditions for superconvergence of {HDG} methods for {S}tokes flow.
\newblock {\em Mathematics of Computation}, 82(282):651--671, 2013.

\bibitem{cockburn2013linearelasticity}
B.~Cockburn and K.~Shi.
\newblock Superconvergent {HDG} methods for linear elasticity with weakly
  symmetric stresses.
\newblock {\em IMA Journal of Numerical Analysis}, 33(3):747--770, 2013.

\bibitem{CoSo2012}
B.~Cockburn and M.~Solano.
\newblock Solving {D}irichlet boundary-value problems on curved domains by
  extensions from subdomains.
\newblock {\em SIAM Journal on Scientific Computing}, 34(1):A497--A519, 2012.

\bibitem{cockburn2014convdif}
B.~Cockburn and M.~Solano.
\newblock Solving convection-diffusion problems on curved domains by extensions
  from subdomains.
\newblock {\em Journal of Scientific Computing}, 59(2):512--543, 2014.

\bibitem{di2011mathematical}
D.~A. Di~Pietro and A.~Ern.
\newblock {\em Mathematical aspects of discontinuous {G}alerkin methods},
  volume~69.
\newblock Springer Science \& Business Media, 2011.

\bibitem{dogan2007discrete}
G.~Dogan, P.~Morin, R.~H. Nochetto, and M.~Verani.
\newblock Discrete gradient flows for shape optimization and applications.
\newblock {\em Computer methods in applied mechanics and engineering},
  196(37-40):3898--3914, 2007.

\bibitem{duan2008shape}
X.-B. Duan, Y.-C. Ma, and R.~Zhang.
\newblock Shape-topology optimization of {S}tokes flow via variational level
  set method.
\newblock {\em Applied Mathematics and Computation}, 202(1):200--209, 2008.

\bibitem{fu2015analysis}
G.~Fu, W.~Qiu, and W.~Zhang.
\newblock An analysis of {HDG} methods for convection-dominated diffusion
  problems.
\newblock {\em ESAIM: Mathematical Modelling and Numerical Analysis},
  49(1):225--256, 2015.

\bibitem{gatica2016priori}
G.~N. Gatica and F.~A. Sequeira.
\newblock A priori and a posteriori error analyses of an augmented {HDG} method
  for a class of quasi-{N}ewtonian {S}tokes flows.
\newblock {\em Journal of Scientific Computing}, 69(3):1192--1250, 2016.

\bibitem{gatica2018priori}
L.~F. Gatica and F.~A. Sequeira.
\newblock A priori and a posteriori error analyses of an {HDG} method for the
  {B}rinkman problem.
\newblock {\em Computers \& Mathematics with Applications}, 75(4):1191--1212,
  2018.

\bibitem{HeSaSo2025}
E.~Henr\'iquez, T.~S\'anchez-Vizuet, and M.~Solano.
\newblock The transfer path method for unfitted discretizations in shape
  optimization.
\newblock {\em In preparation}, 2025.

\bibitem{HeSo2023control}
E.~Henr{\'\i}quez and M.~Solano.
\newblock An unfitted {HDG} method for a distributed optimal control problem.
\newblock {\em Journal of Computational and Applied Mathematics}, 441:115703,
  2024.

\bibitem{HeSo2005}
A.~Henrot and J.~Soko{\l}owski.
\newblock Mathematical challenges in shape optimization.
\newblock {\em Control and Cybernetics}, 34(1):37--57, 2005.

\bibitem{jensen2011topology}
J.~S. Jensen and O.~Sigmund.
\newblock Topology optimization for nano-photonics.
\newblock {\em Laser \& Photonics Reviews}, 5(2):308--321, 2011.

\bibitem{kirby2012cg}
R.~M. Kirby, S.~J. Sherwin, and B.~Cockburn.
\newblock To {CG} or to {HDG}: a comparative study.
\newblock {\em Journal of Scientific Computing}, 51(1):183--212, 2012.

\bibitem{lebbe2019robust}
N.~Lebbe, C.~Dapogny, E.~Oudet, K.~Hassan, and A.~Gliere.
\newblock Robust shape and topology optimization of nanophotonic devices using
  the level set method.
\newblock {\em Journal of Computational Physics}, 395:710--746, 2019.

\bibitem{manriquez2022dissimilar}
J.~Manr{\'\i}quez, N.-C. Nguyen, and M.~Solano.
\newblock A dissimilar non-matching {HDG} discretization for {S}tokes flows.
\newblock {\em Computer Methods in Applied Mechanics and Engineering},
  399:115292, 2022.

\bibitem{manzonioptimal}
A.~Manzoni, A.~Quarteroni, and S.~Salsa.
\newblock {\em Optimal Control of Partial Differential Equations}.
\newblock Springer, 2021.

\bibitem{neches2008topology}
L.~C. Neches and A.~P. Cisilino.
\newblock Topology optimization of {2D} elastic structures using boundary
  elements.
\newblock {\em Engineering analysis with boundary elements}, 32(7):533--544,
  2008.

\bibitem{nguyen2012hybridizable}
N.~C. Nguyen and J.~Peraire.
\newblock Hybridizable discontinuous {G}alerkin methods for partial
  differential equations in continuum mechanics.
\newblock {\em Journal of Computational Physics}, 231(18):5955--5988, 2012.

\bibitem{nguyen2011implicit}
N.~C. Nguyen, J.~Peraire, and B.~Cockburn.
\newblock An implicit high-order hybridizable discontinuous {G}alerkin method
  for the incompressible {N}avier--{S}tokes equations.
\newblock {\em Journal of Computational Physics}, 230(4):1147--1170, 2011.

\bibitem{nguyen2015class}
N.~C. Nguyen, J.~Peraire, and B.~Cockburn.
\newblock A class of embedded discontinuous {G}alerkin methods for
  computational fluid dynamics.
\newblock {\em Journal of Computational Physics}, 302:674--692, 2015.

\bibitem{QiSoVe2016}
W.~Qiu, M.~Solano, and P.~Vega.
\newblock A high order {HDG} method for curved-interface problems via
  approximations from straight triangulations.
\newblock {\em Journal of Scientific Computing}, 69(3):1384--1407, 2016.

\bibitem{reed1973triangular}
W.~H. Reed and T.~R. Hill.
\newblock Triangular mesh methods for the neutron transport equation.
\newblock Technical report, Los Alamos Scientific Lab., N. Mex.(USA), 1973.

\bibitem{rhebergen2013space}
S.~Rhebergen, B.~Cockburn, and J.~J. Van Der~Vegt.
\newblock A space--time discontinuous {G}alerkin method for the incompressible
  {N}avier--{S}tokes equations.
\newblock {\em Journal of computational physics}, 233:339--358, 2013.

\bibitem{SaSaSo:2021a}
N.~S{\'{a}}nchez, T.~S{\'{a}}nchez-Vizuet, and M.~Solano.
\newblock Error analysis of an unfitted {HDG} method for a class of non-linear
  elliptic problems.
\newblock {\em Journal of Scientific Computing}, 90(3), Feb. 2022.

\bibitem{SaSaSo:2019}
N.~S{\'{a}}nchez, T.~S{\'{a}}nchez-Vizuet, and M.~E. Solano.
\newblock A priori and a posteriori error analysis of an unfitted {HDG} method
  for semi-linear elliptic problems.
\newblock {\em Numerische Mathematik}, 148(4):919--958, Aug. 2021.

\bibitem{SaSaSo:2022a}
N.~S{\'{a}}nchez, T.~S{\'{a}}nchez-Vizuet, and M.~E. Solano.
\newblock Afternote to {\textquotedblleft}{C}oupling at a
  distance{\textquotedblright}: Convergence analysis and a priori error
  estimates.
\newblock {\em Computational Methods in Applied Mathematics}, 22(4):945--970,
  July 2022.

\bibitem{SaSo2019}
T.~S\'anchez-Vizuet and M.~E. Solano.
\newblock A hybridizable discontinuous {G}alerkin solver for the
  {G}rad-{S}hafranov equation.
\newblock {\em Computer Physics Communications}, 235:120--132, Feb 2019.

\bibitem{SaCeSo2020}
T.~S\'anchez-Vizuet, M.~E. Solano, and A.~J. Cerfon.
\newblock Adaptive hybridizable discontinuous {G}alerkin discretization of the
  {G}rad--{S}hafranov equation by extension from polygonal subdomains.
\newblock {\em Computer Physics Communications}, 255:107239, 2020.

\bibitem{solano2019high}
M.~Solano and F.~Vargas.
\newblock A high order {HDG} method for {S}tokes flow in curved domains.
\newblock {\em Journal of Scientific Computing}, 79(3):1505--1533, 2019.

\bibitem{solano2022oseen}
M.~Solano and F.~Vargas.
\newblock An unfitted {HDG} method for {O}seen equations.
\newblock {\em Journal of Computational and Applied Mathematics}, 399:113721,
  2022.

\bibitem{soon2009hybridizable}
S.-C. Soon, B.~Cockburn, and H.~K. Stolarski.
\newblock A hybridizable discontinuous {G}alerkin method for linear elasticity.
\newblock {\em International journal for numerical methods in engineering},
  80(8):1058--1092, 2009.

\bibitem{zhou2008variational}
S.~Zhou and Q.~Li.
\newblock A variational level set method for the topology optimization of
  steady-state {N}avier--{S}tokes flow.
\newblock {\em Journal of Computational Physics}, 227(24):10178--10195, 2008.

\end{thebibliography}
}
\end{document}